\newtheorem{theorem}{Theorem}
\newtheorem{corollary}[theorem]{Corollary}
\newtheorem{lemma}[theorem]{Lemma}
\newtheorem{prop}[theorem]{Proposition}
\theoremstyle{definition}
\newtheorem{definition}[theorem]{Definition}
\newtheorem{example}[theorem]{Example}
\newtheorem{rem}{Remark}
\newcommand{\EE}{\mathbb{E}}
\newcommand{\PP}{\mathbb{P}}
\newcommand{\RR}{\mathbb{R}}
\newcommand{\ee}{\varepsilon}
\newcommand{\dint}{\mathrm{d}}
\begin{document}

\title{Statistical Advantages of Oblique Randomized Decision \\ Trees and Forests}
\author{Eliza O'Reilly}

\date{}


\maketitle

\begin{abstract}

This work studies the statistical implications of using features comprised of general linear combinations of covariates to partition the data in randomized decision tree and forest regression algorithms. Using random tessellation theory in stochastic geometry, we provide a theoretical analysis of a class of efficiently generated random tree and forest estimators that allow for oblique splits along such features. We call these estimators \emph{oblique Mondrian} trees and forests, as the trees are generated by first selecting a set of features from linear combinations of the covariates and then running a Mondrian process that hierarchically partitions the data along these features. Quadratic risk bounds and convergence rates are obtained for the flexible function class of multi-index models for dimension reduction, where the output is assumed to depend on a low-dimensional relevant feature subspace of the input domain. The results highlight how the risk of these estimators depends on the choice of features and quantify how robust the risk is with respect to error between the selected features along which the data is split and the true relevant feature subspace. The asymptotic analysis also provides conditions on the convergence rate a set of estimated relevant features must satisfy for oblique Mondrian estimators to obtain minimax optimal rates of convergence with respect to the dimension of the relevant feature subspace. Additionally, a lower bound on the risk of axis-aligned Mondrian trees (where features are restricted to the set of covariates) is obtained, proving that these estimators are suboptimal for general ridge functions, no matter how the distribution over the covariates used to divide the data at each tree node is weighted.
\end{abstract}

\section{Introduction}

Random forests are a widely used class of machine learning algorithms that achieve competitive performance for many tasks \cite{chen2012random, fernandez2014we}. The original algorithm popularized by Breiman \cite{breiman2001random}, and influenced by the work of Amit and Geman \cite{amit_shape_1997} and Ho \cite{ho_random_1998}, remains highly valued for its relative interpretability and ability to handle large datasets with high dimensionality. 
There has also been a recent surge in progress in understanding the statistical properties of Breiman's random forest including consistency rates in fixed and high dimensional settings \cite{ScornetConsistency2015, ChietalnewRates2021, Syrgkanis2020, KlusowskiTian2024}. The algorithm is an ensemble method, outputting predictions that average the predictions across a collection of randomized decision trees. Each tree recursively splits the training data using a set of features of the input and a prediction for a new input is determined by the labels of the training data lying in the same leaf of the tree, or equivalently, the same cell of the random hierarchical partition of the input space generated by the splits. 

Random forests most commonly used in practice are restricted to axis-aligned splits, where only one dimension, or covariate, of the input data is used to partition the data in a given node of the tree. This generates random partitions of the input space made up of cells that are axis-aligned boxes, producing step-wise decision boundaries. The geometry of axis-aligned partitions limits the model's ability to capture dependencies between dimensions of the input, and the corresponding theory and consistency rates have generally been limited to the assumption that the regression function comes from an additive model. Oblique random forests are variants of the algorithm where splits are allowed to depend on linear combinations of the covariates. There have been many approaches for choosing these split directions and the resulting estimators have shown improved empirical performance in a variety of settings over axis-aligned versions \cite{breiman2001random, blaser2016random, pmlr-v84-fan18b, Menzeetal2011, rainforth2015canonical, Tomita2020}. Some recent work \cite{Klusowski_oblique_2023, Zhan2022ConsistencyOO} has also obtained convergence rates for oblique random trees utilizing the CART methodology of Breiman's random forest under the assumption of additive single-index regression models. However, theoretical guarantees for these variants remain scarce and a complete understanding of the statistical advantages of oblique splits over axis-aligned versions is lacking.

There are many difficulties in analyzing Breiman's original random forest algorithm due to the complex dependence of the partitioning scheme on the inputs and labels of the training dataset. To overcome these challenges in the axis-aligned case, simplified versions of the algorithm where the splits do not use the labels of the data have also been studied, including centered random forests \cite{Biau2012} and median random forests \cite{DurouxScornet2018}. Both of these variants, however, have since been shown to be minimax suboptimal for input dimensions greater than one \cite{Klusowski2021}. The first random forest variant for which minimax optimal convergence rates were obtained in arbitrary dimension is the \emph{Mondrian random forest} \cite{mourtada2020minimax}, where component trees are generated by a Mondrian process \cite{roy2008mondrian}. Recent work \cite{KlusowskiMondrian2023} has also proved a central limit theorem for Mondrian forest point estimators and shown that a debiased variant of Mondrian forests can achieve minimax rates for general H\"{o}lder classes.

Given the amenability of the Mondrian partitioning mechanism to theoretical analysis, a natural direction for studying oblique random forests is to study variants of the Mondrian process that use linear combinations of covariates to make splits. Fortunately, the Mondrian process is a special case of the general class of \emph{stable under iteration} (STIT) processes in stochastic geometry introduced by Nagel and Weiss \cite{Nagel2005, Nagel2008}. STIT processes all satisfy properties such as spatial consistency and the Markov property that are attractive about the Mondrian process, but form a much more general class of stochastic hierarchical partitioning processes indexed by a probability measure on the unit sphere called a \emph{directional distribution} that governs the distribution of split directions. Utilizing STIT processes to generate randomized decision trees thus forms a rich and flexible class of oblique random forests. This class of algorithms, called \emph{random tessellation forests}, has been studied empirically in \cite{TehRTFs2019} and the theory of random tessellations in stochastic geometry has been used in \cite{OReillyTran2021, OReillyTran2021minimax} to provide a theoretical framework for the use of these STIT processes in machine learning applications.
In particular, the results of \cite{OReillyTran2021minimax} extend the minimax rates obtained for Mondrian forests to random tessellation forests for any fixed directional distribution. These were the first minimax optimality guarantees for random forest variants with oblique splits. However, these worst-case risk bounds for Lipschitz and $\mathcal{C}^2$ functions do not illustrate an advantage of random tessellation forests with oblique splits over Mondrian forests. The rates in \cite{OReillyTran2021minimax} also suffer from the curse of dimensionality when the input is not contained in a low-dimensional subspace, becoming very slow when the ambient dimension of the input is large. 

In this paper, we address these theoretical limitations by studying how this choice of directional distribution allows random tessellation trees and forests to adapt to a flexible class of dimension reduction models. 
This effort shows the potential of these models to overcome the curse of dimensionality with a data-driven choice of split directions and 
establishes a statistical advantage of employing oblique splits in random forest regression. 
Prior results on the adaptation of random forests to low dimensional structure have focused on the axis-aligned setting and adaptation to \emph{sparse} regression functions, where the output only depends on a small number of covariates relative to the ambient dimension. 
This work establishes that with a good choice of the directional distribution governing the directions of the hyperplane splits, random tessellation forests have the potential to adapt to the more general dimension reduction class of \emph{multi-index models}. Multi-index models are those for which the output only varies with respect to changes of the input in directions relative to a low dimensional subspace of $\RR^d$, called the \emph{relevant feature subspace}. These regression model classes are as general as those studied for two-layer neural networks \cite{Bach2017_curse}, laying additional groundwork for theoretical comparison of the statistical properties of random forests versus neural networks. 

Our specific contributions are the following. We first obtain a general upper bound for the risk of random tessellation trees and forests when the underlying regression function comes from a multi-index model. These bounds illuminate how the risk of the estimator is controlled by the geometry of convex bodies associated with the random tessellation model projected onto the relevant feature subspace (see Theorem \ref{t:Lipschitz_Rate} and Proposition \ref{t:C2_Rate}). 
Next, we restrict to studying random tessellation trees and forests generated by STIT processes where the directional distribution is discrete. We will call these estimators \emph{oblique Mondrian trees and forests} because they can be obtained by first applying a linear transformation to the data to obtain a new set of features from linear combinations of covariates, and then running a Mondrian process (see Section \ref{sec:Mondrian_Transform}). Our results include an upper bound on the risk of the estimators controlled by constants quantifying how close the linear transformation is to a projection onto the relevant feature subspace. These bounds quantify how robust the estimator is to a measure of the approximation error of relevant features (see Theorems \ref{t:C1_Transformed_Mondrain_Bound} and \ref{t:C2_Transformed_Mondrain_Bound}). We also establish rates of convergence of the estimators as the amount of data increases that depend on how this approximation error of relevant features converges to zero, including sufficient asymptotic conditions under which, with proper tuning of complexity parameters, minimax rates of convergence depending only on the dimension of the relevant feature subspace are obtained (see Corollaries \ref{cor:C1_Transformed_Mondrain_Rate} and \ref{cor:C2_Transformed_Mondrain_Rate}).

Finally, we obtain a suboptimality result for axis-aligned randomized decision trees. 
Indeed, while our first collection of results shows that oblique Mondrian trees have the potential to obtain improved rates of convergence for multi-index models over those for general Lipschitz functions with a well-chosen set of split directions and weights, we also obtain a risk \emph{lower bound} for axis-aligned Mondrian trees showing that for \emph{any} choice of weights over the covariates, the axis-aligned splits \emph{cannot} achieve such improved rates for general ridge functions (see Theorem \ref{thm:Mondrian_suboptimal}). 

\subsection{Outline}

The remainder of this paper is organized as follows. Section \ref{sec:background} covers the relevant definitions and background from stochastic and convex geometry needed to prove our results. Section \ref{sec:setting} describes the problem setting and notation followed by risk upper bounds for general random tessellation trees and forests when the underlying regression function comes from a multi-index model. Section \ref{sec:main_results} presents our main results on convergence rates for oblique Mondrian forests, and Section \ref{sec:Mondrian_forests} considers the special case of axis-aligned weighted Mondrian forests and sparse regression models. Section \ref{sec:suboptimality} presents our final main result on the suboptimality of weighted Mondrian forests for general ridge functions. Crucial to our main results is the observation that oblique Mondrian processes obtained through a linear transformation of the data and a Mondrian process is equivalent to partitioning with a STIT process with a particular discrete directional distribution and this is stated and proved in Section \ref{sec:Mondrian_Transform}. Finally, Section \ref{sec:conclusion} concludes with a discussion of the results and future work, and Section \ref{sec:proofs} collects some of the proofs of our main results. The remaining proofs are contained in the Appendix.

\section{Background}\label{sec:background}

In this section, we briefly describe the necessary definitions and other background from stochastic geometry and convex geometry needed for the statements and proofs of our results. In the following, we will denote by $\kappa_k$ the volume of the unit $\ell_2$ ball $B^k$ in $\RR^k$ for $k \in \mathbb{N}$. 

\subsection{Stable Under Iteration (STIT) Tessellations}

A random tessellation $\mathcal{P}$ of $\RR^d$ is a point process of compact convex polytopes $\{C_i\}_{i \in \mathbb{N}}$ in $\RR^d$ such that almost surely, $\cup_i C_i = \RR^d$ and $\mathrm{int}(C_i) \cap \mathrm{int}(C_j) = \emptyset$ for all $i \neq j$. These polytopes will be referred to as the \emph{cells} of the tessellation in the following. A random tessellation is \emph{stationary} if the distribution of $\mathcal{P}$ is invariant under translations in $\RR^d$. 

The \emph{iteration} of a random tessellation is the process of subdividing each cell of the tessellation by an independent copy of the random tessellation restricted to that cell. A random tessellation is \emph{stable under iteration} (STIT) if for all $n$, iterating $n$ times and scaling all the boundaries by $n$ recovers in distribution the original random tessellation. 

The distribution of a stationary STIT tessellation of $\RR^d$ is determined by a parameter $\lambda > 0$ called the \emph{lifetime} and 
an even probability measure $\phi$ on $\mathbb{S}^{d-1}$ called the \emph{directional distribution}, which governs the distribution of the normal directions of the hyperplane splits used to generate the tessellation. A probability measure $\phi$ on the sphere is even if $\phi(B) = \phi(-B)$ for all $B \in \mathcal{B}(\mathbb{S}^{d-1})$.
The following procedure describes the stochastic \emph{STIT process} on a compact window $W \subset \RR^d$, which constructs a STIT tessellation restricted to $W$ with lifetime $\lambda$ and directional distribution $\phi$:
\begin{enumerate}
    \item Sample an exponential clock $\delta$ with parameter
\begin{align*}
\int_{\mathbb{S}^{D-1}} \left(h_W(u) + h_W(-u)\right) \dint \phi(u), 
\end{align*}
where $h_W(u) := \sup_{x \in W} \langle u,x \rangle$ is the support function of $W$. 
\item If $\delta > \lambda$, stop. Else, at time $\delta$, generate a random hyperplane $$H(U,T) := \{x \in \RR^d : \langle x, U\rangle = T\},$$ 
where the unit normal direction $U$ is drawn from the distribution
\[\dint \Phi(u) := \frac{h_W(u) + h_W(-u)}{\int_{\mathbb{S}^{D-1}} \left(h_W(u) + h_W(-u)\right) \dint \phi(u)}\dint\phi(u), \quad u \in \mathbb{S}^{D-1},\]
and conditioned on $U$, $T$ is drawn uniformly on the interval from $-h_W(-U)$ to $h_W(U)$ defining the width of $W$ in direction $u$. 
Split $W$ into two cells $W_1$ and $W_2$ with $H \cap W$.
\item Repeat steps (1) and (2) in each sub-window $W_1$ and $W_2$ independently with new lifetime parameter $\lambda - \delta$ until lifetime expires.
\end{enumerate}
Note that the lifetime $\lambda$ governs the complexity of the resulting STIT tessellation; the larger $\lambda$ is, the longer the process will run and more cells will be generated. When $\phi$ is the uniform distribution over the standard (signed) basis vectors in $\RR^d$, the corresponding STIT process has the same distribution as the Mondrian process \cite{roy2008mondrian}. 

We refer to \cite{Nagel2005} for the proof of the existence of STIT tessellations on $\RR^d$ and some of their properties, one of which we recall here.  For a STIT tessellation $\mathcal{P}(\lambda)$ with lifetime $\lambda > 0$, let $\mathcal{Y}(\lambda)$ denote the union of boundaries of the polytopes. The STIT property implies the following useful \emph{scaling property} of STIT tessellations: \begin{align}\label{e:scaling}
\lambda\mathcal{Y}(\lambda) \overset{(d)}{=} \mathcal{Y}(1).    
\end{align}

\subsubsection{Cells of stationary random tessellations}

Let $Z_x^{\lambda}$ be the cell containing $x \in \RR^d$ of a stationary STIT tessellation with lifetime $\lambda > 0$. The cell $Z_0^{\lambda}$ containing the origin is called the \emph{zero cell}. By stationarity and the property \eqref{e:scaling},
\begin{align}\label{e:scaling_and_translate}
Z_x^{\lambda} \overset{(d)}{=} \frac{1}{\lambda}Z_0 + x, \text{ for all } x \in \RR^d,
\end{align}
where $Z_0 := Z_0^1$ denotes the zero cell of the STIT tessellation with unit lifetime. Another random polytope associated with a stationary STIT tessellation is called the \emph{typical cell}. To define this, first let $\mathcal{K}$ denote the space of compact and convex polytopes in $\RR^d$ and let $c: \mathcal{K} \to \RR^d$ be a function that assigns a ``center" to each polytope $K \in \mathcal{K}$ such that $c(K + x) = c(K) + x$ for all $x \in \RR^d$. Now let $\mathcal{K}_0 := \{ K \in \mathcal{K} : c(K) = 0\}$. The typical cell $Z$ of a stationary random tessellation $\mathcal{P}$ is the random polytope in $\mathcal{K}_0$ such that 
for any non-negative measurable function $f$ on $\mathcal{K}$,
\begin{align}\label{e:campbell}
    \EE\left[\sum_{C \in \mathcal{P}} f(C)\right] = \frac{1}{\EE[\mathrm{vol}_D(Z)]}\EE\left[\int_{\RR^d} f(Z + y) \dint y\right].
\end{align}
The above equality is a special case of Campbell's theorem applied the the stationary point process of convex polytopes that make up the cells of the random tessellation. We refer to \cite[Section~4.1]{weil} for further details.

\subsubsection{Associated zonoid}

There is a rich connection between STIT tessellations and the geometry of convex bodies. In particular, the class of STIT tessellations in $\RR^d$ have a one-to-one correspondence to a subset of convex bodies in $\RR^d$ called \emph{zonoids} \cite{Schneider1983Zonoids}. This class of convex bodies are those that can be approximated by finite Minkowksi sums of line segments with respect to Hausdorff distance. Recall the Minkowksi sum $K + L$ of two convex bodies $K$ and $L$ in $\RR^d$ is defined by
\[K + L := \{x + y : x \in K, y \in L\} \subseteq \RR^d.\]
A convex body $\Pi$ in $\RR^d$ is a zonoid if and only if it has support function of the form $h_{\Pi}(u) = \int_{\mathbb{S}^{d-1}} |\langle u, v \rangle| \dint \mu(v)$ for some finite positive measure $\mu$ on the unit sphere. We can thus define a particular zonoid for a STIT tessellation through its directional distribution.

\begin{definition}
The \emph{normalized associated zonoid} of a STIT process in $\RR^d$ with directional distribution $\phi$ is the zonoid with support function
\begin{align}\label{e:Pi_support_fxn}
    h_{\Pi}(u) := \frac{1}{2}\int_{\mathbb{S}^{d-1}}|\langle u,v \rangle| \dint \phi(v).
\end{align}    
\end{definition}
In the sequel we will use the the following known fact (see \cite{Nagel2003} and \cite[(10.4) and (10.44)]{weil}):
\begin{align}\label{e:EVZ}
\EE[\mathrm{vol}_d(Z)] = \frac{1}{\mathrm{vol}_d(\Pi)},
\end{align}
where $Z$ is the typical cell of a STIT process with lifetime $1$ and normalized associated zonoid $\Pi$.

\begin{example}
An \emph{isotropic} STIT process is obtained by taking the directional distribution to be $\phi \sim \text{Uniform}(\mathbb{S}^{d-1})$. In this case, the normalized associated zonoid $\Pi = c_dB^d$ is an $\ell_2$ ball radius $c_d := \frac{\Gamma(\frac{d}{2})}{2\sqrt{\pi}\Gamma(\frac{d+1}{2})}$.
\end{example}

\begin{example}\label{ex:PI_Mondrian}
The Mondrian process in $\RR^d$ is a special case of a STIT process when the directional distribution is given by $\phi = \frac{1}{2 d}\sum_{i=1}^d \left(\delta_{e_i} + \delta_{-e_i}\right)$, where $\{e_i\}_{i=1}^d$ is the standard orthonormal basis in $\RR^d$. The normalized associated zonoid is the $\ell^{\infty}$ ball
\begin{align*}
    \Pi = [-e_1/2d, e_1 /2d] + \cdots + [-e_d/2d, e_d /2d].
\end{align*}
When the unit basis directions are given more general weights, i.e. $\phi = \sum_{i=1}^d \frac{\omega_i}{2} \left(\delta_{e_i} + \delta_{-e_i}\right)$ where $\sum_{i=1}^d \omega_i = 1$ and $\omega_i > 0$ for all $i$, then the normalized associated zonoid is the hyperrectangle
\begin{align*}
    \Pi = [-\omega_1 e_1/2, \omega_1 e_1 /2] + \cdots + [-\omega_d e_d/2, \omega_d e_d /2],
\end{align*}
and we call the associated STIT process a \emph{weighted Mondrian} process.
\end{example}

\begin{example}\label{ex:PI_oblique_Mondrian}
A general discrete directional distribution on $\mathbb{S}^{d-1}$ has the form $\phi = \sum_{i=1}^m \frac{\omega_i}{2}\left(\delta_{u_i} + \delta_{-u_i}\right)$ for some $m \geq d$, where the weights $\{\omega_i\}_{i=1}^m$ satisfy $\omega_i > 0$ and $\sum_{i=1}^m \omega_i = 1$ and the directions $u_i \in \mathbb{S}^{d-1}$ for $i =1 , \ldots, m$ span all of $\RR^d$. Then, the normalized associated zonoid is given by
\[\Pi = [-\omega_1 u_1/2, \omega_1 u_1 /2] + \cdots + [-\omega_m u_m/2, \omega_m u_m /2],\]
i.e. it is the Minkowski sum of $m$ line segments. In this case, we refer to the corresponding STIT process as an \emph{oblique Mondrian} process.
\end{example}

\begin{figure}
    \centering
    \begin{subfigure}[b]{0.45\textwidth}
     \centering
    \includegraphics[width=.93\textwidth]{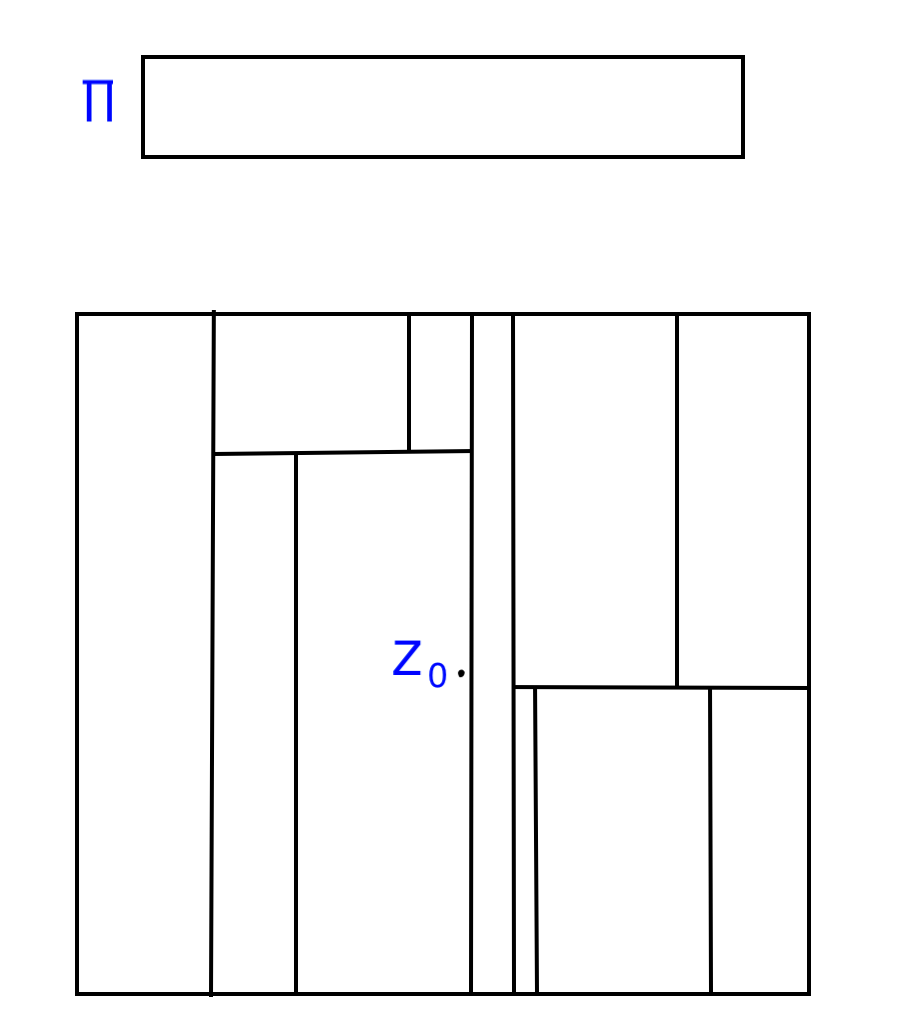}
    \caption{Weighted Mondrian}
    \end{subfigure}
    \begin{subfigure}[b]{0.45\textwidth}
     \centering
    \includegraphics[width=.8\textwidth]{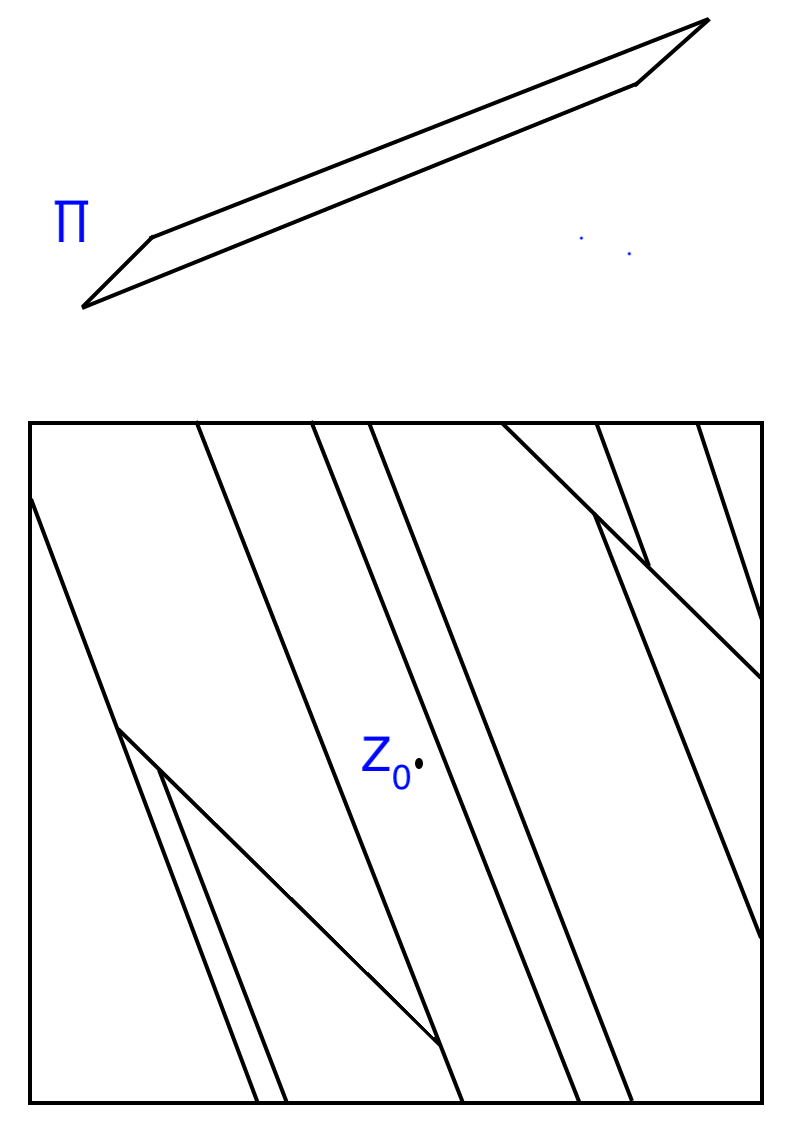}
    \caption{Oblique Mondrian}
    \end{subfigure}
    \caption{An illustration of (a) a weighted Mondrian process with its associated zonoid $\Pi$ as in Example \ref{ex:PI_Mondrian} and (b) an oblique Mondrian process and its associated zonoid $\Pi$ as in Example \ref{ex:PI_oblique_Mondrian}.}
\end{figure}

\subsection{Intrinsic Volumes and Mixed Volumes}

Steiner's formula in convex geometry gives an expression of the volume of the parallel body of a convex body $K$ at distance $\ee > 0$. That is,
\begin{align}\label{e:steiner}
\mathrm{vol}_d(K + \ee B^d) = \sum_{j=0}^d \ee^{d-j} \kappa_{d-j} V_j(K).
\end{align}
The constants $V_j(K)$ are called the \emph{intrinsic volumes} of $K$. The values of these constants only depend on $K$, not the ambient space that $K$ is embedded in. In particular, if $K$ is $\ell$-dimensional, $V_{\ell}(K) = \mathrm{vol}_{\ell}(K)$, the usual $\ell$-dimensional Lebesgue measure of $K$. $V_0(K)$ is the number of connected components of the convex body $K$, and thus $V_0(K) = 1$. The first intrinsic volume is proportional to the mean width and satisfies
\begin{align}\label{e:V1}
    V_1(K) := \frac{d\kappa_d}{\kappa_{d-1}}\int_{\mathbb{S}^{d-1}} h(K,u) \dint \sigma(u),
\end{align}
where $\sigma$ is the uniform probability measure on the unit sphere $\mathbb{S}^{d-1}$.
When $K$ is the ball of unit radius $B^d$ in $\RR^d$, for all $j=1, \ldots, d$,
\begin{align}\label{e:vk_ball}
V_j(B^d) = \binom{d}{j} \frac{\kappa_d}{\kappa_{d-j}}, 
\end{align}
and when $K$ is the unit cube $[0,1]^d$, for all $j = 1, \ldots, d$,
\begin{align}\label{e:vk_cube}
 V_j([0,1]^d) = \binom{d}{j}.   
\end{align}
For convex bodies $K_1, \ldots, K_d$ in $\RR^d$, we notate the 
\emph{mixed volume} by $V(K_1, \ldots, K_d)$. This functional is multilinear in its arguments, symmetric, positive, and monotonic in each variable with respect to inclusion. For additional background on intrinsic volumes and mixed volumes see \cite[Chapter 14]{weil}.

\section{Regression Setting and Random Tessellation Forests}\label{sec:setting}

Consider the following standard nonparametric regression setting. 
Suppose the data set $\mathcal{D}_n := \{(X_1, Y_1), \ldots, (X_n,Y_n)\}$ consists of $n$ i.i.d. samples from a random pair $(X,Y) \in \RR^d \times \RR$ such that $\EE[Y^2] < \infty$.  Let $\mu$ denote the unknown distribution of $X$. Suppose $\mu$ has compact support and
\begin{align}\label{e:model1}
Y = f(X) + \ee,
\end{align}
for some unknown function $f: \RR^d \to \RR$ and noise $\ee$ satisfying $\EE[\ee|X] = 0$ and $\mathrm{Var}(\ee|X) = \sigma^2 < \infty$ almost surely. 
We make the additional assumption that the function $f$ is of the form
\begin{align}\label{e:multi-index_model}
f(x) = g(Bx), \quad x \in \RR^d,
\end{align}
where $g:\RR^s \to \RR$ and $B \in \RR^{s \times d}$ for $s \leq d$. This is a general dimensionality reduction model known as a \emph{multi-index model} or \emph{ridge function}, where the regression function depends only on the inputs $\langle b_1, X \rangle, \ldots, \langle b_s, X \rangle$, where $\{b_i\}_{i=1}^s$ are the rows of $B$. Let $S := \mathrm{span}(\{b_i\}_{i=1}^s)$ denote the associated \emph{relevant feature subspace}. An equivalent assumption is that
\begin{align}\label{e:ridge_fxn_assump}
f(x) = \tilde{g}(P_S x),    
\end{align}
for some $\tilde{g}: S \to \RR$ where $P_S$ is the orthogonal projection operator onto the subspace $S$. In the following, we will assume $\tilde{g}$ satisfies the following regularity condition.

\begin{definition}
    A function $f: \RR^d \to \RR$ is in $\mathcal{C}^{k,\beta}(L)$ for $L > 0$ if for all $x,y \in \RR^d$ and $\alpha \leq k$,
    \[\|D^{\alpha}f(x) - D^{\alpha}f(y)\| \leq L\|x - y\|^{\beta}.\]
\end{definition}

To estimate $f$, we use a random forest estimator built from a random tessellation $\mathcal{P}$ of the support of $\mu$ and the data set $\mathcal{D}_n$. A regression tree estimator based on $\mathcal{P}$ is first defined as
\begin{align}\label{e:tree}
    \hat{f}_n(x, \mathcal{P}) := \sum_{i=1}^n \frac{1_{\{X_i \in Z_x\}}}{\mathcal{N}_n(x)}Y_i,
\end{align}
where $Z_x$ is the cell of $\mathcal{P}$ that contains $x$ and $\mathcal{N}_n(x) := \sum_{i=1}^n 1_{\{X_i \in Z_x\}}$ is the number of points in $Z_x$. If $\mathcal{N}_n(x) = 0$, then it is assumed that $\hat{f}_n(x, \mathcal{P}) = 0$. The random forest estimator based on $\mathcal{P}$ is defined by averaging $M$ i.i.d. copies of the tree estimator, i.e.
\begin{align}\label{e:forest}
    \hat{f}_{n,M}(x) := \frac{1}{M} \sum_{m=1}^M \hat{f}_n(x, \mathcal{P}_m),
\end{align}
where $\mathcal{P}_1, \ldots, \mathcal{P}_M$ are $M$ i.i.d. copies of $\mathcal{P}$. 

A \emph{random tessellation forest} estimator is defined as a random forest estimator, where the random tessellation $\mathcal{P}$ is the tessellation generated by a STIT process. This class of estimators is parameterized by a lifetime $\lambda > 0$ and a directional distribution $\phi$ on the unit sphere, or equivalently, a normalized associated zonoid $\Pi$.

\subsection{Risk Bound for Ridge Functions}

 Our first main result provides an upper bound on the quadratic risk for a general random tessellation forest estimator of a ridge function. In the following, we will denote the diameter of a convex body $K$ in $\RR^d$ by $\mathrm{D}(K)$, and for a linear subspace $S$ in $\RR^d$ we will denote by $P_SK$ the orthogonal projection of $K$ onto $S$ and $P_{S^{\perp}}K$ the orthogonal projection of $K$ onto the orthogonal subspace $S^{\perp}$ to $S$. Throughout, the expectation in the risk is taken with respect to the dataset $\mathcal{D}_n$, $X$, and the random tessellations. The notation $a \vee b$ denotes the maximum of $a$ and $b$ and $a \wedge b$ denotes the minimum of $a$ and $b$.

\begin{theorem}\label{t:Lipschitz_Rate}
Assume $\mathrm{supp}(\mu) \subseteq B^d$ and $f$ satisfies \eqref{e:ridge_fxn_assump} with $\tilde{g} \in \mathcal{C}^{0,\beta}(L)$ for some $L > 0$ and subspace $S$ of dimension $s \leq d$.  Let $\hat{f}_{n} = \hat{f}_{n, M, \lambda, \Pi}$ be a random tessellation forest estimator with normalized associated zonoid $\Pi$, $M$ trees, and lifetime $\lambda > 0$. Then,
\begin{align*}
&\EE[(\hat{f}_{n}(X) - f(X))^2] \\
&\leq \frac{L^2\EE[\mathrm{D}(P_SZ_{0})^{2\beta}]}{\lambda^{2\beta}}   + \frac{5\|f\|_{\infty}^2 + 2\sigma^2}{n} \left(2s\sum_{k=1}^d \lambda^k\kappa_k  V_1(P_{S^{\perp}}\Pi)^{1 \vee (k-s)} + \sum_{k=0}^{s}\lambda^k \kappa_k V_{k}(P_S\Pi)\right). 
\end{align*}
\end{theorem}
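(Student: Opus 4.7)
The argument will rest on a standard bias--variance decomposition for the single-tree estimator; since $\hat{f}_{n,M}$ is an average of i.i.d.\ trees, Jensen's inequality gives $\EE[(\hat{f}_{n,M}(X)-f(X))^2]\le\EE[(\hat{f}_n(X)-f(X))^2]$, so it suffices to bound one tree. Conditioning on $(\mathcal{P},X)$, I would split $\hat{f}_n(X)-f(X)$ into the noise-averaging contribution, the within-cell $f$-deviation, and the boundary contribution on the empty-cell event $\{\mathcal{N}_n(X)=0\}$. Using $\mathcal{N}_n(X)\mid\mathcal{P},X\sim\mathrm{Bin}(n,\mu(Z_X))$ together with the classical Binomial estimates $\EE[\1\{\mathrm{Bin}>0\}/\mathrm{Bin}]\le 2/((n+1)\mu(Z_X))$ and $(1-\mu(Z_X))^n\le(n\mu(Z_X))^{-1}\wedge 1$, one arrives at the pointwise bound
\begin{align*}
\EE\bigl[(\hat{f}_n(X)-f(X))^2 \mid X,\mathcal{P}\bigr] \le L^2\,\mathrm{D}(P_SZ_X)^{2\beta} + \frac{5\|f\|_\infty^2 + 2\sigma^2}{n\,\mu(Z_X)},
\end{align*}
after tracking the constants carefully through those inequalities.

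For the bias term, the ridge structure~\eqref{e:ridge_fxn_assump} with $\tilde{g}\in\mathcal{C}^{0,\beta}(L)$ immediately yields $|f(x)-f(x')|\le L\,\|P_S(x-x')\|^\beta \le L\,\mathrm{D}(P_SZ_X)^\beta$ for any $x,x'\in Z_X$, which explains the first term above. The scaling-and-stationarity identity~\eqref{e:scaling_and_translate} then gives $\mathrm{D}(P_SZ_X^\lambda)\overset{(d)}{=}\lambda^{-1}\mathrm{D}(P_SZ_0)$, and taking expectations produces the first summand $L^2\lambda^{-2\beta}\EE[\mathrm{D}(P_SZ_0)^{2\beta}]$ of the claimed bound.

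For the variance term, the key reduction is
\begin{align*}
\EE_X\!\left[\frac{1}{\mu(Z_X)}\right] = \EE_{\mathcal{P}}\bigl[\#\{C\in\mathcal{P} : \mu(C)>0\}\bigr] \le \EE\bigl[\#\{C\in\mathcal{P} : C\cap B^d\ne\emptyset\}\bigr],
\end{align*}
valid because $\mathrm{supp}(\mu)\subseteq B^d$. Applying Campbell's formula~\eqref{e:campbell} to the test function $C\mapsto\1\{C\cap B^d\ne\emptyset\}$ rewrites this count as $\EE[\mathrm{vol}_d(B^d + (-Z))]/\EE[\mathrm{vol}_d(Z)]$, where $Z$ denotes the typical cell at lifetime $\lambda$; the scaling property~\eqref{e:scaling} combined with~\eqref{e:EVZ} reduces everything to the lifetime-one typical cell together with the factor $\mathrm{vol}_d(\Pi)\,\lambda^d$.

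The delicate step -- and the main obstacle -- is converting the resulting expression into the explicit sum $\sum_{k=s+1}^d c_{d,k}\lambda^k\,\mathrm{D}(P_{S^\perp}\Pi)^{k-s} + \sum_{k=0}^s c_{d,k}\lambda^k$. My plan is to expand the Minkowski sum via the mixed-volume polynomial
\begin{align*}
\mathrm{vol}_d(B^d + (-Z_1)) = \sum_{k=0}^d \binom{d}{k}\,V\bigl(B^d[k],\,(-Z_1)[d-k]\bigr),
\end{align*}
and then exploit the orthogonal inclusion $B^d\subseteq P_SB^d + P_{S^\perp}B^d$ together with the multilinearity and monotonicity of mixed volumes to split each summand according to how many of its $B^d$-copies are projected into $S$ versus $S^\perp$. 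The bookkeeping is where the work is: for $k\le s$ all $k$ copies of $B^d$ can be absorbed into the $s$-dimensional factor, producing a $c_{d,k}\lambda^k$ contribution with no $S^\perp$-diameter penalty, while for $k>s$ the forced $k-s$ factors of $P_{S^\perp}B^d$ pair with the $S^\perp$-extent of the typical cell, which in turn is controlled by $\mathrm{D}(P_{S^\perp}\Pi)$ via the known moment bound linking the typical cell's mixed volumes to its associated zonoid $\Pi$. Aggregating the combinatorial constants and invoking the bound $V_k(B^d)\le \kappa_k\pi^{k/2}d^{k/2}/k!$ (easily verified from~\eqref{e:vk_ball}) yields exactly the constants $c_{d,k}$ and closes the argument.
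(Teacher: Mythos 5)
Your plan matches the paper on the key structural moves: Jensen's inequality to reduce to a single tree, the bias controlled by $L\,\mathrm{D}(P_SZ_X)^\beta$ together with the scaling identity $Z_X^\lambda \overset{(d)}{=} \lambda^{-1}Z_0 + X$, and the variance via $\EE_X[1/\mu(Z_X)] = \#\{C\in\mathcal{P}:\mu(C)>0\}\le N_\lambda(B^d)$ followed by Campbell's formula (the paper routes these last two steps through Lemmas 15 and 4 of \cite{OReillyTran2021minimax}, but the content is the same).

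The gap is in the step you yourself flag as the main obstacle. After writing $\EE[\mathrm{vol}_d(B^d + (-Z))] = \sum_k \binom{d}{k}\EE[V(B^d[k], Z[d-k])]$, you propose to split $B^d \subseteq P_SB^d + P_{S^\perp}B^d$ while the typical cell $Z$ is still inside the mixed volume. That leaves you facing $\EE[V(B^s[j], B^{d-s}[k-j], Z[d-k])]$, and the ``known moment bound linking the typical cell's mixed volumes to $\Pi$'' that you invoke does not apply here: the identity $\EE V_{d-k}(Z) = V_k(\Pi)/\mathrm{vol}_d(\Pi)$ from \cite[(10.3), Thm.~10.3.3]{weil} is only valid when every non-$Z$ argument in the mixed volume is a copy of $B^d$. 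Worse, $P_{S^\perp}B^d = B^{d-s}$ is a fixed body with no dependence on $\Pi$, so this decomposition simply has no mechanism to produce the $\mathrm{D}(P_{S^\perp}\Pi)^{k-s}$ factor. The paper resolves this by reversing the order of operations: it \emph{first} takes expectation so that $\binom{d}{k}\EE V(B^d[k], Z[d-k]) = \kappa_k V_k(\Pi)/\mathrm{vol}_d(\Pi)$ eliminates $Z$ entirely, and only \emph{then} decomposes $\Pi \subseteq P_S\Pi + P_{S^\perp}\Pi$ inside $V_k(\Pi)\propto V(\Pi[k], B^d[d-k])$. The inclusions $P_S\Pi \subseteq \tfrac12 B^s$ and $P_{S^\perp}\Pi\subseteq\tfrac12\mathrm{D}(P_{S^\perp}\Pi)B^{d-s}$ (both consequences of $h_\Pi\le 1/2$) are what create the $\mathrm{D}(P_{S^\perp}\Pi)^{k-s}$ powers. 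Finally, the constants $c_{d,k}$ do not come from a direct bound on $V_k(B^d)$ as you suggest; they arise from $\tfrac{1}{2^k}V_k(B^s+B^{d-s})\le \tfrac{1}{k!}V_1(B^d)^k \le \tfrac{(\pi d)^{k/2}}{k!}$ via McMullen's inequality and Gautschi, with $\kappa_k$ a separate prefactor already present in the cell-count formula.
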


The upper bound for the random tessellation forest above is obtained by first bounding the forest risk by the risk of a single tree estimator and then considering a standard bias-variance decomposition. The first expression in the upper bound controls the bias, or approximation error, of the tree estimator, quantifying how well a function $f$ in $\mathcal{C}^{0, \beta}(L)$ can be approximated by any function that is constant over the cells of the corresponding tessellation of the input space. For all inputs that lie in the same cell, the estimator will output the same value, and thus, given the assumption on $f$, this error is controlled by $L$ and the diameter of the projection of the cell onto the relevant feature subspace. The second expression is a bound on the variance, or the estimation error of the model. This is controlled by the amount of data and the complexity of the model, which for randomized decision trees can be quantified by the number of cells of the tessellation, or equivalently, the number of leaves of the corresponding tree. 

The dependence of the second term in the upper bound on $S$ may seem odd since the variance should not depend on the regression model. Indeed, such a bound on the variance holds for an arbitrary subspace, but we use the $S$ defined by the multi-index model to highlight how the variance can decay as the directional distribution becomes more and more concentrated on the relevant feature subspace. The first term in the parentheses comes from using a STIT process that makes splits in directions not aligned with the relevant feature subspace $S$. Note that if the associated zonoid $\Pi$ is contained in $S$, i.e., all split directions are contained in $S$, then the variance term will have order $\lambda^s/n$, which is the order of the variance for a random tessellation tree estimator with lifetime $\lambda$ of a function on $\RR^s$. Also note that if $s = d$, i.e. $S = \RR^d$, then we recover the risk upper bound for general Lipschitz functions on $\RR^d$ in \cite{OReillyTran2021minimax}.

Theorem \ref{t:Lipschitz_Rate} in particular illuminates how the risk for the random tessellation estimator of a ridge function depends on the relationship between the geometry of normalized associated zonoid of the STIT tessellation and the zero cell to the relevant feature subspace $S$. Figure \ref{fig:subspace} illustrates this relationship and how ensuring the projection of $\Pi$ onto $S^{\perp}$ is small means the relevant subspace is more efficiently subdivided for a given lifetime $\lambda$ and the projection of $Z_0$ onto $S$ can be controlled, ensuring a smaller risk.

We now observe that as in Theorem 6 of \cite{OReillyTran2021minimax}, the upper bound in Theorem \ref{t:Lipschitz_Rate} does not depend on the number of trees $M$ and thus holds for a single random tessellation tree estimator. We can assume a stronger regularity condition on the regression function, as well as stronger assumptions on the input distribution $\mu$, and obtain an upper bound that depends on the number of trees $M$ in the forest estimator. However, there are terms in this upper bound that are difficult to interpret in the general case, so we omit it here and refer to Proposition \ref{t:C2_Rate} in Section \ref{a:C2-general-bnd} in the Appendix for the result. Next we will restrict to oblique Mondrian forests, where the analysis simplifies and we can obtain closed form expressions for the risk upper bounds which constitute the main contributions of this work.

\begin{figure}
    \centering
    \includegraphics[width=.5\textwidth]{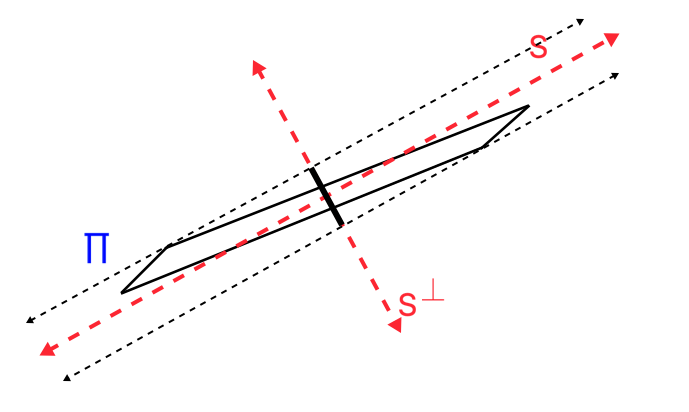}
    \includegraphics[width=.4\textwidth]{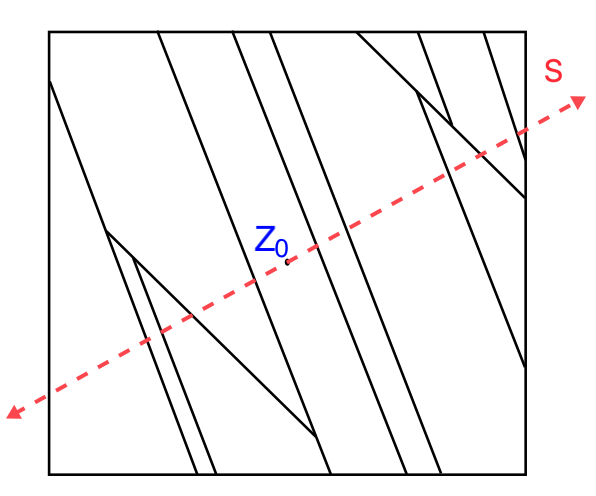}
    \caption{Illustration of an associated zonoid and corresponding STIT tessellation in relation to a relevant feature subspace $S$. If the projection of $\Pi$ onto $S^{\perp}$ is small, then $S$ is cut more frequently by the boundaries of the STIT tessellation for a given lifetime.} 
    \label{fig:subspace}
\end{figure}

\section{Convergence Rates for Oblique Mondrian Trees and Forests}\label{sec:main_results}

The risk upper bound in the previous section holds for random tessellation trees and forests with any associated directional distribution. We next would like to obtain rates of convergence for a sequence of random tessellation forest estimators built from $n$ data points as $n$ grows. The results in \cite{OReillyTran2021minimax} provide such rates when the lifetime grows with $n$ and the directional distribution is fixed for all $n$. Here, we consider the case when the directional distribution is also allowed to depend on $n$, representing an estimator that uses a data-driven choice of directional distribution to generate the STIT process. 
It is difficult in general to obtain closed form expressions for the terms in the bounds from Theorem \ref{t:Lipschitz_Rate} and Proposition \ref{t:C2_Rate} that depend on the directional distribution through the diameter of the normalized zero cell projected onto the relevant feature subspace $S$. 
Without further understanding how these terms explicitly depend on the directional distribution or the normalized associated zonoid, we cannot in general obtain the asymptotic behavior of the bias for a sequence of estimators where this parameter depends on $n$.

To overcome this, we now restrict ourselves to the subclass of STIT processes with discrete directional distributions, where the directions of the splits are sampled from a finite discrete set of vectors on the unit sphere. That is, there is a finite set of linear combinations of covariates along which the STIT process makes splits. Under this assumption, we can obtain bounds on the relevant statistics that will illuminate the asymptotic behavior of the risk upper bounds. Another reason for focusing on this subclass of STIT processes is that the partition of the data they generate can be efficiently obtained by first applying a linear transformation to the input data, and then running a Mondrian process. As mentioned in the introduction, we will thus call this subclass oblique Mondrian processes and refer to the corresponding tree and forest estimators as \emph{oblique Mondrian trees and forests}. 
 
In particular, for a matrix $A \in \RR^{d \times k}$ define the directional distribution 
\begin{align}\label{e:model_dirdist}
\phi_A = \sum_{i = 1}^k \frac{\|a_i\|_2}{2\|A\|_{2,1}}\left(\delta_{a_i/\|a_i\|_2} + \delta_{-a_i/\|a_i\|_2}\right),
\end{align}
where $\{a_i\}_{i=1}^k$ are the columns of $A$, and $\|A\|_{2,1} = \sum_{i=1}^k \|a_i\|_2$ is the norm of the matrix that sums the $\ell_2$-norms of the column vectors. We assume the columns contain $d$ linearly independent vectors in $\RR^d$, i.e. the rank of $A$ is $d \leq k$.
The partition of the data induced by a STIT tessellation with directional distribution $\phi_A$ can be efficiently obtained by applying the transformation $A^T$ to the data and then running a Mondrian process. This is proved in Section \ref{sec:Mondrian_Transform}, and is a refinement of Theorem 3.1 in \cite{OReillyTran2021}. 
In the remainder of this section, we will focus on directional distributions of the form \eqref{e:model_dirdist} for nonsingular $A \in \RR^{d \times d}$. The theory can be extended to general fixed $k \geq d$ and $A \in \RR^{d \times k}$ with rank $d$, but a larger $k$ only increases the upper bound on the bias using our proof techniques.

Our first result of this section is an upper bound on the risk of an oblique Mondrian forest for a regression function satisfying the same assumption as in Theorem \ref{t:Lipschitz_Rate}. 

\begin{theorem}\label{t:C1_Transformed_Mondrain_Bound}
Assume $\mathrm{supp}(\mu) \subseteq B^d$ and $f$ satisfies \eqref{e:ridge_fxn_assump} with $\tilde{g} \in \mathcal{C}^{0,\beta}(L)$ for some $L > 0$.  Let $\hat{f}_{n} = \hat{f}_{n,\lambda,M}$ be an oblique Mondrian forest estimator with lifetime $\lambda$ and directional distribution $\phi_A$ as in \eqref{e:model_dirdist} for some nonsingular $A \in \RR^{d \times d}$ with  $\|A\|_{2,1} = 1$.
Then,
\begin{align*}
&\EE[(\hat{f}_{\lambda, n, M}(X) - f(X))^2] \\
&\leq \frac{9L^{2}d^{2\beta}}{\lambda^{2\beta}\sigma_{s}(P_SA)^{2\beta}} + \frac{(5\|f\|^2_{\infty} + 2\sigma^2)}{n}\left(2s\sum_{k=1}^d \lambda^k\kappa_k \|P_{S^{\perp}}A\|_{2,1}^{1 \vee (k-s)} + \sum_{k=0}^{s}\lambda^k \frac{\kappa_k}{k!}\right)
\end{align*}
where 
$\sigma_s(P_SA)$ is the $s$-th largest singular value of the matrix $P_SA$. 
\end{theorem}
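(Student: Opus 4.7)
The plan is to specialize Theorem \ref{t:Lipschitz_Rate} to the oblique Mondrian and to bound each of the two summands on its right-hand side in terms of the matrix $A$. The variance term is immediate: by Example \ref{ex:PI_oblique_Mondrian} and the assumption $\|A\|_{2,1}=1$, the normalized associated zonoid is $\Pi = \sum_{i=1}^m [-a_i/2, a_i/2]$, so $P_{S^\perp}\Pi$ is a Minkowski sum of line segments and hence
\[\mathrm{D}(P_{S^\perp}\Pi) \le \sum_{i=1}^m \|P_{S^\perp}a_i\|_2 = \|P_{S^\perp}A\|_{2,1},\]
which substitutes directly into the variance term of Theorem \ref{t:Lipschitz_Rate}.

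The bias term requires showing $\EE[\mathrm{D}(P_SZ_0)^{2\beta}] \le 9 m^{4\beta}/(d^{2\beta}\sigma_s(P_SA)^{2\beta})$. Since $2\beta \le 2$, Jensen's inequality combined with $9^\beta \le 9$ for $\beta \in (0,1]$ reduces the problem to the estimate $\EE[\mathrm{D}(P_SZ_0)^2] \le 9 m^4/(d^2 \sigma_s(P_SA)^2)$. To obtain this, I will invoke the equivalence from Section \ref{sec:Mondrian_Transform}: the oblique Mondrian with directional distribution $\phi_A$ is the pullback through the map $y \mapsto A^T y$ of a weighted axis-aligned Mondrian in $\RR^m$. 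Consequently $Z_0 = \{y \in \RR^d : A^T y \in \tilde Z_0\}$, where $\tilde Z_0 = \prod_{i=1}^m [-L_i^+, L_i^-]$ is the axis-aligned zero cell in $\RR^m$ whose independent sidelengths $L_i = L_i^+ + L_i^-$ have moments controlled by $\|a_i\|_2^{-1}$. The remaining deterministic estimate is to bound $\|P_S z\|_2$ for any $z \in \RR^d$ satisfying $|\langle a_i, z\rangle| \le L_i$ for all $i$. By LP duality, the width of $Z_0 - Z_0$ in a unit direction $v \in S$ equals $\inf\{\sum_i L_i |\gamma_i| : A\gamma = v\}$; choosing a suitable minimum-norm representative $\gamma$ in the preimage $A^{-1}(v)$ and applying Cauchy--Schwarz yields a bound of the form $\mathrm{D}(P_SZ_0) \le K_{m,d} (\sum_i L_i^2)^{1/2}/\sigma_s(P_SA)$, with $K_{m,d}$ furnishing the polynomial factor $m^2/d$. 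Taking expectations using $\EE[L_i^2] \asymp \|a_i\|_2^{-2}$ and the normalization $\sum_i \|a_i\|_2 = 1$ then produces the claimed bound.

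The main technical obstacle lies in this last deterministic step: the constraints $|\langle a_i, z\rangle| \le L_i$ mix the $S$- and $S^\perp$-components of each $a_i$, so some care is needed in the LP-duality argument to route the infimum over $\gamma$ through the subspace $S$ so that $\sigma_s(P_SA)^{-1}$ (rather than the weaker $\sigma_d(A)^{-1}$ that a naive pseudoinverse bound would give) appears in the denominator, and to surface the correct polynomial factor $m^2/d$ from the second-moment computations on the independent transformed Mondrian sidelengths.
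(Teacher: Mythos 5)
Your variance-term bound is identical to the paper's: $\mathrm{D}(P_{S^\perp}\Pi) \le \|P_{S^\perp}A\|_{2,1}$ from \eqref{e:Pi_support_fxn}, then substitution into Lemma~\ref{l:general_variance}. The Jensen reduction of the bias exponent from $2\beta$ to $2$ is also valid (the paper instead applies Gautschi's inequality directly to $\Gamma(2m+2\beta)$). The substantive divergence is in the deterministic bound on $\mathrm{D}(P_SZ_0)$, and there the proposal has real gaps.

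The paper's Lemma~\ref{l:Deter} couples $Z_0$ to the zero cell $Z_0^{(M)}$ of a \emph{uniform} Mondrian in $\RR^m$ (Corollary~\ref{cor:obliqueMondrian_zerocell}) and bounds the width in direction $P_Su$ via an $L^\infty\times L^1$ split, $\sum_i(T_i^{(1)}+T_i^{(2)})|\langle A^+P_Su,e_i\rangle| \le \max_i\|P_S(A^+)^Te_i\|_2\cdot\sum_i(T_i^{(1)}+T_i^{(2)})$, so that the scalar factor is a single $\mathrm{Erlang}(2m,1)$ variable with explicit moments and the matrix factor is controlled by $1/\sigma_s(P_SA)$. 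Your Cauchy--Schwarz route fails on two concrete points. First, with your claimed $\EE[L_i^2]\asymp\|a_i\|_2^{-2}$ the quantity $\sum_i\EE[L_i^2]\asymp\sum_i\|a_i\|_2^{-2}$ is not controlled by the normalization $\|A\|_{2,1}=1$: sending one column norm to zero sends it to infinity, so the second-moment bookkeeping you describe cannot close. Second, if you instead adopt the paper's parametrization so that the $L_i$ are $O(1)$ Erlang sidelengths and the $\|a_i\|_2$ migrate into $A$, the LP dual is $\inf\{\sum_i L_i|\gamma_i| : A\gamma=v\}$; Cauchy--Schwarz and the minimum-norm $\gamma=A^+v$ then give only $\|\gamma\|_2\le 1/\sigma_d(A)$, not the needed $1/\sigma_s(P_SA)$. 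You name this routing-through-$S$ step as ``the main technical obstacle'' and leave it unresolved, and the Cauchy--Schwarz structure actually hides the quantity that must be bounded, namely $\max_i\|P_S(A^+)^Te_i\|_2$, which is exactly what the paper's $L^\infty\times L^1$ split isolates (and controls via its pseudoinverse identity $P_S(A^+)^T = ((P_SA)^+)^T$). Without an explicit resolution of that step the key estimate $\mathrm{D}(P_SZ_0)\lesssim(\sum_iL_i^2)^{1/2}/\sigma_s(P_SA)$ is unproved, and the proposal does not yield the theorem.
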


If the relevant feature subspace $S$ is known, one can project the input data onto $S$ and then generate a random tessellation forest estimator supported on this $s$-dimensional subspace. Note that the risk bound in Theorem \ref{t:C1_Transformed_Mondrain_Bound} reduces to the upper bound for an oblique Mondrian forest on $S$ when the range of $A$ is contained in $S$, and thus minimax optimal rates for functions on $\RR^s$ will be obtained with such an estimator by appropriately tuning $\lambda$ with $n$ as in \cite{OReillyTran2021minimax}. In practice, we do not know this subspace, and so instead must estimate a linear image $A$ that approximates a projection onto $S$ and build an oblique Mondrian estimator. There are many existing approaches for estimating relevant feature directions, including sufficient dimension reduction methods \cite{SIR_Li1991,dennis2000save,Xia2002MAVE} and gradient-based approaches \cite{Trivedietal2014,JMLR:v11:wu10a}. We do not study a particular method for estimating $A$ here, but rather focus on inference post-estimation of the relevant feature directions. An algorithm that generates an oblique Mondrian forest with an estimate of $A$ based on the expected gradient outer product was recently introduced \cite{Baptista_TrIM} and uses the results presented here to obtain convergence guarantees.

From the definition of $\phi_A$ in \eqref{e:model_dirdist}, the columns of $A$ determine the directions and weights of the splits used to generate each tree. When the projection of these column vectors onto $S^{\perp}$ has a small norm, then each vector is either close to the span of $S$ or has a small norm, giving the associated direction a small weight so that the oblique Mondrian process rarely makes a split in that direction. The bound in Theorem \ref{t:C1_Transformed_Mondrain_Bound} above quantifies how the risk of the corresponding oblique Mondrian estimator depends on the choice of this $A$, including the dependence on the projection of the columns of $A$ onto $S^{\perp}$ though the sum of the column norms $\|P_{S^{\perp}}A\|_{2,1}$.

We next model the results of a data-driven procedure for selecting a set of split directions with a sequence of matrices $A_n$ that will be applied to inputs of the dataset $\mathcal{D}_n$ of size $n$. The following result provides a rate of convergence of the corresponding sequence of oblique Mondrian forests depending on how well $A_n$ approximates a projection onto $S$ as $n$ grows. As long as this approximation error approaches zero in the limit, we obtain an improved rate of convergence for ridge functions over the worst-case minimax rate for $\mathcal{C}^{0,\beta}$ functions on $\RR^d$. In addition, these rates provide a sufficient condition for this approximation error such that these oblique Mondrian forests achieve the minimax optimal rate of convergence for $\mathcal{C}^{0,\beta}$ functions on $\RR^s$, where $s$ is the dimension of the relevant feature subspace.

\begin{corollary}\label{cor:C1_Transformed_Mondrain_Rate}
Consider the setting of Theorem \ref{t:C1_Transformed_Mondrain_Bound}. For each $n$, let $\hat{f}_n$ be an oblique Mondrian forest with lifetime $\lambda_n$ and directional distribution $\phi_{A_n}$ for a nonsingular $A_n \in \mathbb{R}^{d \times d}$ with $\|A_n\|_{2,1} = 1$. Assume there is an absolute constant $c > 0$ such that
\begin{itemize}
\item[(i)] $\sigma_{s}(P_SA_n) \geq c$, and
\item[(ii)] $\|P_{S^{\perp}}A_n\|_{2,1} \leq \ee_n$ for $\ee_n = o(1)$. 
\end{itemize}
Then, letting 
$$\lambda_n \asymp \begin{cases} (L^2n)^{\frac{1}{d + 2\beta}} \ee_n^{-\frac{(d-s)}{d + 2\beta}}, & \varepsilon_n \gtrsim (L^2n)^{-\frac{1}{s + 2\beta}}, \\ 
(L^2n)^{\frac{1}{s + 2\beta}}, & \varepsilon_n \lesssim (L^2n)^{-\frac{1}{s + 2\beta}}
\end{cases}$$ 
yields
\begin{align}\label{e:rate1}
\EE\left[(f(X) - \hat{f}_{n,\lambda_n, M_n}(X))^2\right] \lesssim \max\left\{L^{\frac{2d}{d + 2\beta}}n^{-\frac{2\beta}{d + 2\beta}} \ee_n^{\frac{2\beta(d-s)}{d + 2\beta}}, L^{\frac{2s}{s + 2\beta}}n^{-\frac{2\beta}{s + 2\beta}}\right\}.
\end{align}
\end{corollary}

\begin{rem}
We note that Corollary \ref{cor:C1_Transformed_Mondrain_Rate} implies that a sufficient condition for minimax optimal rates for the class of $\mathcal{C}^{0,\beta}(L)$ functions on $\RR^s$ to be achieved, i.e.
\begin{align}\label{e:rate1}
\EE\left[(f(X) - \hat{f}_{\lambda_n,n}(X))^2\right] 
&\lesssim L^{\frac{2s}{s + 2\beta}}n^{-\frac{2\beta}{s + 2\beta}},
\end{align}
is that the asymptotic error in the relevant feature matrix satisfies $\ee_n \lesssim L^{-\frac{2}{s + 2\beta}}n^{-\frac{1}{s + 2\beta}}$, and setting $\lambda_n = L^{\frac{2}{s + 2\beta}}n^{\frac{1}{s + 2\beta}}$.
\end{rem}

The above results hold for oblique Mondrian forests with any number of trees. The advantage of averaging the prediction of many trees is observed in the following results, which provide a risk bound that depends on the number of trees for an oblique Mondrian forest estimator when additional smoothness is assumed for the regression function as in Proposition \ref{t:C2_Rate}, as well as an improved rate of convergence. For a sequence of oblique Mondrian forests with directional distribution depending on $n$, it is much more difficult to obtain improved rates in this setting with transparent conditions on the linear transformation $A_n$ in general. To provide such conditions, we make the additional assumption that the columns of $A$ form an orthogonal set of vectors in $\RR^d$. If this is not initially satisfied, one could perform an eigendecomposition or other orthogonalization procedure of the given matrix to obtain a directional distribution that satisfies this assumption. 

\begin{theorem}\label{t:C2_Transformed_Mondrain_Bound}
Assume $f$ satisfies \eqref{e:ridge_fxn_assump} with $\tilde{g} \in \mathcal{C}^{1, \beta}(L)$, and assume $\mu$ has a positive and Lipschitz density on its support $B^d$.   Let $\hat{f}_{n} = \hat{f}_{n,\lambda, M}$ be the oblique Mondrian forest estimator with lifetime $\lambda$, $M$ trees, and directional distribution $\phi_A$ given by \eqref{e:model_dirdist} for a nonsingular $A \in \RR^{d \times d}$ with $\|A\|_{2,1} = 1$ and orthogonal column vectors. 
Then, for fixed $\delta \in (0, 1)$, letting $B_{\delta} := \{x \in \RR^d : \|x\| \leq 1 - \delta\}$,
\begin{align*}
&\EE[(\hat{f}_{n}(X) - f(X))^2| X \in B_{\delta}] \\
&\leq c_{\mu}L^2\left(\frac{d^2\|P_{S^{\perp}}A\|^2_F}{\lambda^2\sigma_s(P_SA)^2} + \frac{d^{2\beta + 2}}{\lambda^{2 + 2\beta} \sigma_s(P_SA)^{2 + 2\beta}}\right) 
+ \frac{6L^2d^2}{\lambda^2M\sigma_s(P_SA)^{2}} \\
& +  \frac{5\|f\|_{\infty}^2 + 2\sigma^2}{n\mu(B_{\delta})} \left(2s\sum_{k=1}^d \lambda^k\kappa_k \|P_{S^{\perp}}A\|_{2,1}^{1 \vee (k-s)} + \sum_{k=0}^{s}\frac{\lambda^k\kappa_k}{k!}\right) + o\left(\frac{1}{\lambda^{2+ 2 \beta} \sigma_s(P_SA)^{2+ 2 \beta}}\right), 
\end{align*}
and in the unconditional case $\delta = 0$,
\begin{align*}
 &\EE[(\hat{f}_{\lambda,n, M}(X) - f(X))^2]  \\&\leq c_{\mu}L^2\left(\frac{d^2\|P_{S^{\perp}}A\|^2_F}{\lambda^2\sigma_s(P_SA)^2} + \frac{d^4}{\lambda^{3 \wedge (2 + 2\beta)}\sigma_s(P_SA)^{3 \wedge (2 + 2\beta)}}\right) 
 + \frac{6L^2d^2}{\lambda^2M\sigma_s(P_SA)^{2}}\\
 & +  \frac{5\|f\|_{\infty}^2 + 2\sigma^2}{n} \left(2s\sum_{k=1}^d \lambda^k\kappa_k \|P_{S^{\perp}}A\|_{2,1}^{1 \vee (k-s)} + \sum_{k=0}^{s}\frac{\lambda^k \kappa_k}{k!}\right)  + o\left(\frac{1}{\lambda^{3 \wedge (2+ 2 \beta)} \sigma_s(P_SA)^{3 \wedge (2+ 2 \beta)}}\right). 
\end{align*}
The constants in the little-$o$ term depend on $\delta, d, L$, and $\beta$ and the constant $c_{\mu}$ only depends on $\mu$.
\end{theorem}

Using these upper bounds, we are now able to obtain convergence rates for a sequence of oblique Mondrian forests corresponding to a sequence of linear transformations $A_n$ that depend on the approximation error between $A_n$ and a projection onto the relevant feature subspace $S$ similarly to Corollary \ref{cor:C1_Transformed_Mondrain_Rate}.

\begin{corollary}\label{cor:C2_Transformed_Mondrain_Rate}
Consider the setting of Theorem \ref{t:C2_Transformed_Mondrain_Bound}. For each $n$, let $\hat{f}_n$ be an oblique Mondrian forest estimator with lifetime $\lambda_n$, number of trees $M_n$, and directional distribution $\phi_{A_n}$ for some nonsingular $A_n \in \mathbb{R}^{d \times d}$ with $\|A_n\|_{2,1} = 1$ and orthogonal column vectors. Assume there is an absolute constant $c > 0$ such that 
\begin{itemize}
\item[(i)] $\sigma_{s}(P_SA_n) \geq c$ for all $n$, and
\item[(ii)] $\|P_{S^{\perp}}A_n\|_{2,1} \leq \ee_n$ for $\ee_n = o(1)$. 
\end{itemize}
For fixed $\delta \in (0, 1)$, letting 
\[
\lambda_n
\asymp
\begin{cases}
(nL^2)^{\frac{1}{s+2+2\beta}},
&
\varepsilon_n
\lesssim
(nL^2)^{-\frac{1}{s+2+2\beta}},
\\
(nL^2\varepsilon_n^{s-d})^{\frac{1}{d+2+2\beta}},
&
(nL^2)^{-\frac{1}{s+2+2\beta}}
\lesssim
\varepsilon_n
\lesssim
(nL^2)^{-\frac{\beta}{d(1-\beta)+\beta s+2+2\beta}},
\\
(nL^2\epsilon_n^{s-d+2})^{\frac{1}{d+2}},
&
\varepsilon_n
\gtrsim
(nL^2)^{-\frac{\beta}{d(1-\beta)+\beta s+2+2\beta}}
\end{cases}
\]
and $M_n \gtrsim \lambda_n^{2\beta}$ yields
\begin{align}\label{e:rate1}
&\EE[(\hat{f}_{n}(X) - f(X))^2|X \in B_{\delta}] \nonumber \\
&\lesssim 
\begin{cases}
L^{\frac{2s}{s+2+2\beta}} n^{-\frac{2+2\beta}{s+2+2\beta}},
&
\varepsilon_n
\lesssim
(nL^2)^{-\frac1{s+2+2\beta}},
\\
L^{\frac{2d}{d + 2 + 2\beta}}n^{-\frac{2+2\beta}{d+2+2\beta}}
\varepsilon_n^{\frac{(d-s)(2+2\beta)}{d+2+2\beta}},
&
(nL^2)^{-\frac1{s+2+2\beta}}
\lesssim
\varepsilon_n
\lesssim
(nL^2)^{-\frac{\beta}{d(1-\beta)+\beta s+2+2\beta}},
\\
L^{\frac{2d}{d+2}} n^{-\frac{2}{d+2}}
\varepsilon_n^{\frac{2(2d-s)}{d+2}},
&
\varepsilon_n
\gtrsim
(nL^2)^{-\frac{\beta}{d(1-\beta)+\beta s+2+2\beta}}.
\end{cases}
\end{align}
In the unconditional case $\delta = 0$, the rates above hold if $2 + 2\beta \leq 3$, and otherwise letting $M_n \gtrsim \lambda_n$ and
\[
\lambda_n
\asymp
\begin{cases}
(nL^2)^{\frac{1}{s+3}},
&
\varepsilon_n
\lesssim
(nL^2)^{-\frac{1}{s+3}},
\\
\left(nL^2\varepsilon_n^{s-d}\right)^{\frac{1}{d+3}},
&
(nL^2)^{-\frac{1}{s+3}}
\lesssim
\varepsilon_n
\lesssim
(nL^2)^{-\frac{1}{d+s+6}},
\\
\left(nL^2\varepsilon_n^{s-d+2}\right)^{\frac{1}{d+2}},
&
\varepsilon_n
\gtrsim
(nL^2)^{-\frac{1}{d+s+6}},
\end{cases}
\]
gives
\[
\EE[(\hat{f}_{\lambda,n, M}(X) - f(X))^2] \lesssim
\begin{cases}
L^{\frac{2s}{s+3}} n^{-\frac{3}{s+3}},
&
\varepsilon_n
\lesssim
(nL^2)^{-1/(s+3)},
\\
L^{\frac{2d}{d+3}} n^{-\frac{3}{d+3}}
\varepsilon_n^{\frac{3(d-s)}{d+3}},
&
(nL^2)^{-\frac{1}{s+3}}
\lesssim
\varepsilon_n
\lesssim
(nL^2)^{-\frac{1}{d+s+6}},
\\
L^{\frac{2d}{d+2}} n^{-\frac{2}{d+2}}
\varepsilon_n^{\frac{2(2d-s)}{d+2}},
&
\varepsilon_n
\gtrsim
(nL^2)^{-\frac{1}{d+s+6}}.
\end{cases}
\]
\end{corollary}

\begin{rem}
    We note that Corollary \ref{cor:C2_Transformed_Mondrain_Rate} implies that a sufficient condition for the minimax rate for the class of $\mathcal{C}^{1,\beta}(L)$ functions on $\RR^s$ to be achieved, i.e.
    \begin{align}\label{e:rate1}
\EE[(f(X) - \hat{f}_{n,\lambda_n, M_n}(X))^2|X \in B_{\delta}] \lesssim L^{\frac{2d}{s + 2 \beta + 2}}n^{-\frac{2 + 2\beta}{s + 2\beta + 2}},
\end{align}
is an asymptotic error decay $\ee_n \lesssim L^{-2/(s + 2 + 2 \beta)}n^{-1/(s + 2 + 2\beta)}$ of the relevant feature matrix and letting $\lambda_n = L^{2/(s + 2 + 2 \beta)}n^{1/(s + 2 + 2\beta)}$ and $M_n \gtrsim \lambda_n^{2\beta}$. The minimax rate is not achieved for any $\varepsilon_n$ in the case $\delta = 0$ and $2 + 2\beta > 3$.
\end{rem}

\section{Risk Bounds for Weighted Mondrian Forests}\label{sec:Mondrian_forests}

Consider now the special case of weighted Mondrian forests obtained from weighted Mondrian processes as in example \ref{ex:PI_Mondrian}. We will study the ability of this subclass of oblique Mondrian forests to adapt to sparse functions, as has been studied for other variants of axis-aligned random forests. 

More specifically, consider the following setting. Assume that $\mathcal{S} \subseteq \{1, \ldots, d\}$ is a subset of size $|\mathcal{S}| = s$ that corresponds to a small subset of the covariates that the regression function varies with respect to. That is, we assume the true function $f$ is of the form
\begin{align}\label{e:sparse_f}
f(x) = g(x_S) = g(\{x_i\}_{i \in \mathcal{S}}) = g(P_Sx),
\end{align}
for $g: \RR^s \to \RR$ and the orthogonal projection operator $P_S$ onto $S = \mathrm{span}\{e_i : i \in \mathcal{S}\}$.
Assume the input $X$ is supported on $[0,1]^d$ and $Y = f(X) + \ee$ for noise $\ee$ as in section \ref{sec:setting}. Consider a weighted Mondrian forest estimator $\hat{f}_n$ built from $n$ i.i.d. samples of $(X,Y)$ with lifetime $\lambda_n$ and directional distribution
\begin{align}\label{e:phi_weighted_mondrian}
\phi = \sum_{i=1}^d \frac{\omega_i}{2} \left(\delta_{e_i} + \delta_{-e_i}\right),
\end{align}
where the weights $\{\omega_i\}_{i=1}^d$ satisfy $\sum_{i=1}^d \omega_i = 1$ and $\omega_i > 0$ for each $i$.

The following results are analogous to those presented for oblique Mondrian forests, with upper bounds on the risk followed by corollaries in the setting where the weights depend on $n$, modeling a data-driven choice of weights. A variety of feature importance scores have been developed that could be used to select the weights \cite{breiman2001random, deng2022towards}, and the approach of reweighting the split selection probabilities before generating the trees in random forest algorithms was introduced in \cite{Yuetal2018}. Here, we assume some data-driven method of estimating feature relevance has generated associated weights $\omega^{(n)}_i$ that converge to 0 as $n$ grows if dimension $i$ is not in the set of relevant features $\mathcal{S}$. In this setting, we obtain rates of convergence and conditions on this approximation error needed to obtain minimax optimal rates depending on the sparsity level $s$.
We state the results in this setting separately from the more general oblique Mondrian forests because we can obtain a simplified version of the variance bound which gives a weaker condition on the weights for improved rates than obtained from directly applying the previous results. For simplicity, we restrict to the case where $\beta = 1$ for the assumption on the regression function in the following statements.

\begin{theorem}\label{t:rate1_Mondrian}
Assume $\mathrm{supp}(\mu) \subseteq [0,1]^d$ and $f$ satisfies \eqref{e:sparse_f} where $g \in \mathcal{C}^{0, 1}(L)$ for some $L > 0$, i.e. $g$ is $L$-Lipschitz. Let $\hat{f}_{n} = \hat{f}_{\lambda, n, M}$ be the weighted Mondrian tree estimator with directional distribution \eqref{e:phi_weighted_mondrian} and lifetime $\lambda > 0$, and define $\omega_{S} := \min_{i \in \mathcal{S}} \omega_i$.
Then,
\begin{align*}
\EE[(\hat{f}_{n}(X) - f(X))^2] &\leq \frac{6L^2s}{\lambda^2\omega_{S}^2}+ \frac{(5\|f\|^2_{\infty} + 2\sigma^2)}{n}\prod_{i = 1}^d\left(1 + \lambda \omega_i\right). 
\end{align*}
\end{theorem}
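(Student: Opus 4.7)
The plan is a bias--variance decomposition specialized to the axis-aligned sparse setting, following the template of Theorem~\ref{t:Lipschitz_Rate} but sharpening both sides of the decomposition by exploiting the product structure of the weighted Mondrian. Writing $\tilde f(x,\mathcal{P}) := \EE[f(X) \mid X \in Z_x, \mathcal{P}]$ for the Bayes piecewise-constant predictor on the partition, the standard decomposition
\[\EE[(\hat f_n(X) - f(X))^2] = \EE[(\tilde f(X,\mathcal{P}) - f(X))^2] + \EE[(\hat f_n(X,\mathcal{P}) - \tilde f(X,\mathcal{P}))^2]\]
applies tree-by-tree, and because the trees in the forest have identical marginal laws, the forest risk is bounded by the single-tree risk in each summand. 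It therefore suffices to bound each piece for one weighted Mondrian tree.

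For the bias term, the weighted Mondrian produces axis-aligned hyperrectangular cells, so the projection $P_S Z_x$ onto the relevant axis subspace $S = \mathrm{span}\{e_i : i \in \mathcal{S}\}$ is itself an axis-aligned rectangle in $\RR^s$ with side lengths $\{L_i\}_{i \in \mathcal{S}}$. The Lipschitz hypothesis on $g$ gives $|f(x) - \tilde f(x,\mathcal{P})| \leq L\,\mathrm{D}(P_S Z_x)$, and by stationarity together with the scaling identity \eqref{e:scaling_and_translate}, $\EE[\mathrm{D}(P_S Z_x)^2] = \lambda^{-2}\EE[\mathrm{D}(P_S Z_0)^2]$, where $Z_0$ is the zero cell of the unit-lifetime weighted Mondrian. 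A direct consequence of the STIT construction is that the $i$-th side length of $Z_0$ is a sum of two independent $\mathrm{Exp}(\omega_i)$ variables, so $\EE[L_i^2] = 6/\omega_i^2$. Since $\mathrm{D}(P_S Z_0)^2 = \sum_{i \in \mathcal{S}} L_i^2$, summing over $i \in \mathcal{S}$ and using $\omega_i \geq \omega_{\mathcal{S}}$ yields
\[\EE[\mathrm{D}(P_S Z_0)^2] = \sum_{i \in \mathcal{S}} \frac{6}{\omega_i^2} \leq \frac{6s}{\omega_{\mathcal{S}}^2},\]
which produces the first term $6L^2 s/(\lambda^2 \omega_{\mathcal{S}}^2)$ of the bound.

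For the variance term, I would use the standard estimate (as in the proof of Theorem~\ref{t:Lipschitz_Rate} and in the Mondrian-forest literature) that the conditional estimation error is bounded, up to the constant $5\|f\|_\infty^2 + 2\sigma^2$, by $\EE[N(\mathcal{P})]/n$, where $N(\mathcal{P})$ is the number of cells of $\mathcal{P}$ intersecting $\mathrm{supp}(\mu) \subseteq [0,1]^d$. For the weighted Mondrian on $[0,1]^d$ with directional distribution \eqref{e:phi_weighted_mondrian} and lifetime $\lambda$, a recursive computation via the STIT construction yields the closed-form identity
\[\EE[N(\mathcal{P})] = \prod_{i=1}^d(1 + \lambda \omega_i).\]
This is obtained by induction on the number of cuts: the first cut arrives at an independent $\mathrm{Exp}(1)$ time (the total rate equals $\sum_i \omega_i = 1$), lies in direction $e_i$ with probability $\omega_i$ and at a uniformly distributed location along axis $i$, and splits $[0,1]^d$ into two independent sub-boxes on which two weighted Mondrians with the residual lifetime continue; integrating against the cut location verifies that the product $\prod_j(1 + \lambda \omega_j)$ is preserved. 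Plugging this cell-count identity into the variance bound yields the second term in the theorem.

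The main obstacle is establishing the sharp product $\EE[N(\mathcal{P})] = \prod_i(1+\lambda \omega_i)$, which is substantially tighter than the intrinsic-volume-style variance estimate from Theorem~\ref{t:Lipschitz_Rate} and is precisely what drives the sparsity-adaptive behavior in the ensuing corollary. Once this identity is in hand, the rest of the argument is a routine combination of the bias--variance decomposition with the explicit distribution of the weighted-Mondrian cell side lengths derived above.
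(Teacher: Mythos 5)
Your plan is correct and tracks the paper's overall structure (bias--variance decomposition, then bound the variance by $(5\|f\|_\infty^2+2\sigma^2)\EE[N_\lambda([0,1]^d)]/n$ and show $\EE[N_\lambda([0,1]^d)]=\prod_i(1+\lambda\omega_i)$), but you obtain the two key quantities by different means. For the bias, you compute $\EE[\mathrm{D}(P_SZ_0)^2]$ exactly using the hyperrectangular structure of the zero cell, $\mathrm{D}(P_SZ_0)^2=\sum_{i\in\mathcal{S}}L_i^2$ with $L_i\sim\omega_i^{-1}\cdot\mathrm{Gamma}(2,1)$, giving $\sum_{i\in\mathcal{S}}6/\omega_i^2\le 6s/\omega_\mathcal{S}^2$; this is the computation that actually produces the stated constant $6s$, and it is sharper than routing through the generic Erlang bound in Lemma~\ref{l:diam_mondrian} (which, applied at $k=2$, yields $2s(2s+1)/\omega_\mathcal{S}^2$). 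For the expected cell count, the paper derives $\prod_i(1+\lambda\omega_i)$ via the mixed-volume formula for $\EE[N_\lambda(W)]$ (Lemma 6 of \cite{OReillyTran2021minimax}, combined with the support function of the typical cell and $\mathrm{vol}_d(\Pi)=\prod_i\omega_i$), whereas you propose a self-contained recursion on the STIT construction. Your recursive argument is more elementary and avoids the zonoid machinery, but as stated it is only a sketch: ``induction on the number of cuts'' needs to be made rigorous by writing the renewal/integral equation for $m_\lambda(a_1,\dots,a_d):=\EE[N_\lambda(\prod_i[0,a_i])]$, conditioning on the first cut time ($\mathrm{Exp}(\sum_i\omega_i a_i)$), direction (probability $\propto\omega_i a_i$), and location (uniform on $[0,a_i]$), and then verifying that $m_\lambda(a)=\prod_i(1+\lambda\omega_i a_i)$ solves it. That algebra does close, so the plan is sound, but you should present it as a verification of the ansatz against the integral equation rather than an informal induction. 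The paper's mixed-volume route has the advantage of generality (arbitrary convex windows, arbitrary directional distributions), while yours is shorter for this specific product case.
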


\begin{corollary}\label{cor:C1_Mondrian}
Consider the setting of Theorem \ref{t:rate1_Mondrian}. For each $n$, let $\hat{f}_n$ be a weighted Mondrian forest estimator with lifetime $\lambda_n$ and directional distribution $\phi_n$ as in \eqref{e:phi_weighted_mondrian} where the weights $\{\omega_i^{(n)}\}_{i=1}^d$ depend on $n$. Assume there is an absolute constant $c > 0$ such that
\begin{itemize}
\item[(i)] $\omega^{(n)}_S \geq c$ for all $n$, and
\item[(ii)] $\max_{i \notin \mathcal{S}} \omega^{(n)}_i \leq \ee_n$ for $\ee_n = o(1)$. 
\end{itemize}
Then, the same rates as in Corollary \ref{cor:C1_Transformed_Mondrain_Rate} hold.
\end{corollary}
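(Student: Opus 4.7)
My plan is to deduce the rates directly from the single bound in Theorem \ref{t:rate1_Mondrian}, by analyzing the bias and variance terms separately under the two hypotheses on $\{\omega_i^{(n)}\}$ and then carrying out the standard bias--variance balance. Throughout, I may work with a generic $\lambda = \lambda_n$ growing with $n$, and I may assume $\lambda_n \geq 1$ eventually since otherwise the bound is trivially $O(1)$.

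First, I would handle the bias. Condition (i) gives $\omega_S^{(n)} \geq c$, so the bias term in Theorem \ref{t:rate1_Mondrian} satisfies
\begin{equation*}
\frac{6L^2 s}{\lambda^2 (\omega_S^{(n)})^2} \;\leq\; \frac{6L^2 s}{c^2 \lambda^2} \;\lesssim\; \frac{L^2}{\lambda^2}.
\end{equation*}
For the variance term, the key move is to split the product $\prod_{i=1}^d(1 + \lambda \omega_i^{(n)})$ according to whether $i\in\mathcal{S}$ or $i\notin\mathcal{S}$. Using $\omega_i^{(n)} \leq 1$ always, and $\omega_i^{(n)} \leq \ee_n$ for $i \notin \mathcal{S}$ (condition (ii)), I obtain
\begin{equation*}
\prod_{i=1}^d\bigl(1 + \lambda \omega_i^{(n)}\bigr) \;\leq\; (1 + \lambda)^{s}\,(1 + \lambda \ee_n)^{d-s}.
\end{equation*}
The elementary inequality $(1+x)^k \leq 2^k(1 + x^k)$ then gives the two-term variance estimate
\begin{equation*}
\frac{C}{n}\prod_{i=1}^d(1 + \lambda \omega_i^{(n)}) \;\lesssim\; \frac{\lambda^{s}}{n} \;+\; \frac{\lambda^{d}\,\ee_n^{d-s}}{n},
\end{equation*}
for $\lambda \geq 1$, with the implicit constant absorbing $2^d$, $\|f\|_\infty$, and $\sigma^2$.

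The remainder is a standard bias--variance balance against $L^2/\lambda^2$, exactly mirroring the analysis behind Corollary \ref{cor:C1_Transformed_Mondrain_Rate} in the case $\beta = 1$. If $\ee_n$ is small enough that the second variance term is dominated by the first at the choice $\lambda_n \asymp L^{2/(s+2)}n^{1/(s+2)}$---which amounts precisely to $\lambda_n \ee_n \lesssim 1$, i.e. $\ee_n \lesssim L^{-2/(s+2)} n^{-1/(s+2)}$---then balancing $L^2/\lambda^2$ with $\lambda^s/n$ yields the minimax rate $L^{2s/(s+2)} n^{-2/(s+2)}$. Otherwise, balancing $L^2/\lambda^2$ against $\lambda^d \ee_n^{d-s}/n$ gives $\lambda_n \asymp L^{2/(d+2)} n^{1/(d+2)} \ee_n^{-(d-s)/(d+2)}$ and the slower rate $L^{2d/(d+2)} n^{-2/(d+2)} \ee_n^{2(d-s)/(d+2)}$; taking the maximum of the two candidate rates recovers the full conclusion of Corollary \ref{cor:C1_Transformed_Mondrain_Rate}.

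The only non-routine step is step two: choosing the elementary majorization of $\prod_i(1+\lambda\omega_i^{(n)})$ so that the resulting variance separates cleanly into an ``effective-dimension-$s$'' term and an ``$\ee_n$-penalized ambient-dimension'' term, rather than a more tangled polynomial in $\lambda$ and $\ee_n$. Because Theorem \ref{t:rate1_Mondrian} retains a product form for the variance (rather than the weaker sum-of-monomials bound used in the more general oblique case of Theorem \ref{t:C1_Transformed_Mondrain_Bound}), the above decomposition is essentially sharp and gives the same rates with the weaker hypothesis that the off-subspace weights decay rather than the columns of $A_n$ align with $S$.
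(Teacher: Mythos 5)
Your proof is correct and is exactly the argument the paper intends: you start from the bound in Theorem \ref{t:rate1_Mondrian}, use (i) to absorb the $\omega_S^{-2}$ in the bias, split $\prod_{i=1}^d(1+\lambda\omega_i^{(n)})$ over $i\in\mathcal{S}$ versus $i\notin\mathcal{S}$ using (ii), reduce to the two monomials $\lambda^s/n$ and $\lambda^d\ee_n^{d-s}/n$, and then run the same bias--variance balance as Corollary \ref{cor:C1_Transformed_Mondrain_Rate} with $\beta=1$. The paper leaves this proof implicit (pointing only to the product-form variance being the reason these weighted-Mondrian results are stated separately), but your decomposition is the natural one and matches the paper's structure and stated rates.
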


\begin{theorem}\label{t:C2_Rate_Mondrian}
Assume $\mathrm{supp}(\mu) = [0,1]^d$ and that $\mu$ has a positive and Lipschitz density on its support. Assume $f$ satisfies \eqref{e:sparse_f} for some $g \in \mathcal{C}^{1,\beta}(L)$ and let $\hat{f}_{n}$ be the weighted Mondrian forest estimator with directional distribution \eqref{e:phi_weighted_mondrian} and lifetime $\lambda > 0$.
Then, for $\delta \in (0,1/2)$,
\begin{align*}
&\EE[(\hat{f}_{n}(X) - f(X))^2| X \in [\delta, 1 - \delta]^d] \\
&\leq \frac{c_{\mu} s^4L^2}{\lambda^{4}\omega_S^{4}}  + \frac{6L^2s}{\lambda^2M \omega_S^2}+  \frac{5\|f\|_{\infty}^2 + 2\sigma^2}{n} \prod_{i = 1}^d\left(1 + \lambda \omega_i\right) + o(\lambda^{-4}),    
\end{align*}
where $\omega_{S} := \min_{i \in \mathcal{S}} \omega_i$. For $\delta = 0$,
\begin{align*}
&\EE[(\hat{f}_{n}(X) - f(X))^2]\\ &\leq \frac{c_{\mu} s^4L^2}{\lambda^{4}\omega_S^{4}}  + \frac{\tilde{c}_{\mu} s^4 L^2}{\lambda^3} + \frac{6L^2s}{\lambda^2M \omega_S^2} +  \frac{5\|f\|_{\infty}^2 + 2\sigma^2}{n} \prod_{i = 1}^d\left(1 + \lambda \omega_i\right) + o(\lambda^{-3}), 
\end{align*}
where $c_{\mu}$ and $\tilde{c}_{\mu}$ are constants that depend only on $\mu$.
\end{theorem}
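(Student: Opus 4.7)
The weighted Mondrian of \eqref{e:phi_weighted_mondrian} is the special case of the oblique Mondrian framework with $A = \mathrm{diag}(\omega_1,\dots,\omega_d)$: one has $\|A\|_{2,1} = 1$, $\sigma_s(P_SA) = \omega_S$, $\|P_{S^{\perp}}A\|_{2,1} = \sum_{i \notin \mathcal{S}}\omega_i$, and the condition $P_Sa_i \in \{a_i,\mathbf{0}\}$ of Theorem \ref{t:C2_Transformed_Mondrain_Bound} is automatic since each $e_i$ lies either in $S$ or in $S^{\perp}$. My plan is to follow the same bias/variance decomposition used to prove Theorem \ref{t:C2_Transformed_Mondrain_Bound} but to sharpen each ingredient by exploiting two features that are specific to the axis-aligned case: the product structure of the zero cell and the independence of the one-dimensional coordinate cuts.

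\textbf{Bias.} I would run the Taylor expansion of $\tilde g \in \mathcal{C}^{1,1}$ and the $M$-averaging argument that produces the $(L^2/\lambda^2)(\text{bracket})^2$ bound in Theorem \ref{t:C2_Rate}. The decisive simplification is that the zero cell $Z_0$ of a weighted Mondrian is an axis-aligned box, so $Z_0 = P_SZ_0 + P_{S^{\perp}}Z_0$ as a Minkowski sum of orthogonal boxes and the volume-ratio contribution $\EE[\mathrm{D}(P_SZ_0)(1 - \mathrm{vol}_d(Z_0)/\mathrm{vol}_d(P_SZ_0 + P_{S^{\perp}}Z_0))]$ vanishes identically. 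The remaining moments are controlled by direct computation: the side length $L_i$ of $Z_0$ along coordinate $i$ satisfies $\lambda\omega_i L_i \sim \mathrm{Gamma}(2,1)$ with the $\{L_i\}_{i=1}^d$ mutually independent, and from $\mathrm{D}(P_SZ_0)^2 = \sum_{i \in \mathcal{S}}L_i^2$ together with Cauchy--Schwarz one obtains $\EE[\mathrm{D}(P_SZ_0)^k] \lesssim s^{k/2}/(\lambda\omega_S)^k$. Assembling these in the bracket-squared-times-$L^2/\lambda^2$ structure yields the main bias term $c_{\mu} s^4 L^2/(\lambda^4\omega_S^4)$, and the single-tree Lipschitz estimate $|\tilde g(P_Sx) - \tilde g(P_Sy)| \le L\,\mathrm{D}(P_SZ_0)$ combined with $\EE[\mathrm{D}(P_SZ_0)^2] \lesssim s/(\lambda\omega_S)^2$ produces the tree-averaging term $6L^2 s/(\lambda^2 M \omega_S^2)$.

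\textbf{Variance.} The remaining noise-and-labels contribution is at most $(5\|f\|_\infty^2 + 2\sigma^2)/n$ times the expected number of cells of $\mathcal{P}$ that meet $\mathrm{supp}(\mu) = [0,1]^d$. The axis-aligned structure now delivers a substantially tighter estimate than the generic polynomial-in-$\lambda$ bound from Theorem \ref{t:Lipschitz_Rate}: because every split direction is a coordinate axis, the projection of $\mathcal{P}$ onto axis $i$ is a homogeneous Poisson point process on $[0,1]$ of rate $\lambda\omega_i$, the $d$ projections are mutually independent, and the cells of $\mathcal{P} \cap [0,1]^d$ are precisely the product cells determined by these axis cuts. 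The expected cell count is therefore exactly $\prod_{i=1}^d(1+\lambda\omega_i)$, giving the stated variance term.

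\textbf{Boundary ($\delta = 0$) and lower-order terms.} On $[\delta,1-\delta]^d$, the indicator $1_{\{\mathrm{D}(P_SZ_0) \ge \lambda\delta\}}$ multiplying the middle sums of Theorem \ref{t:C2_Rate} forces those contributions into $o(\lambda^{-4})$ via the exponential tail of $\mathrm{D}(P_SZ_0)$ at scale $1/(\lambda\omega_S)$. For the unconditional case $\delta = 0$, one must separately bound the $\mu$-mass of a boundary strip of width $\sim 1/(\lambda\omega_S)$; using the Lipschitz density hypothesis together with the axis-aligned Steiner coefficients $V_j([0,1]^s) = \binom{s}{j}$ from \eqref{e:vk_cube}, the middle sum is $O(s^4 L^2/\lambda^3)$ without the tail decay, producing the extra $\tilde c_\mu s^4 L^2/\lambda^3$ term and weakening the remainder to $o(\lambda^{-3})$. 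The main technical obstacle is the careful bookkeeping of the indicator-weighted moments and boundary contributions, which proceeds through explicit integration against the product-of-Gammas joint density for $(L_1,\ldots,L_d)$; the clean factorization of this density into one-dimensional pieces is ultimately what makes all of the Mondrian-specific sharpenings go through.
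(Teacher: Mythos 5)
Your overall strategy -- specialize the bias-variance decomposition of Theorem~\ref{t:C2_Rate} to the axis-aligned case, use that the zero cell is a box so the volume-ratio term vanishes, control the projected diameter via Erlang-distributed side lengths, and handle the boundary contribution through the indicator $1_{\{\mathrm{D}(P_S Z_0)\geq \lambda\delta\}}$ -- is exactly the approach the paper takes, so the bias and boundary pieces of your argument are sound (your Cauchy--Schwarz sharpening of the moment bound to $s^{k/2}$ rather than $s^k$ is a harmless bonus that you then don't exploit).

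However, your variance argument contains a genuine error. You assert that for the weighted Mondrian ``the projection of $\mathcal{P}$ onto axis $i$ is a homogeneous Poisson point process on $[0,1]$,\ldots\ the $d$ projections are mutually independent, and the cells of $\mathcal{P}\cap[0,1]^d$ are precisely the product cells determined by these axis cuts.'' This is false: the Mondrian/STIT tessellation is hierarchical, and a cut made in one cell does not extend to the neighbouring cell. In two dimensions, for example, one horizontal cut followed by a vertical cut in only the top piece and a second horizontal cut in only the bottom piece yields four cells, whereas the grid formed by projecting these cuts onto the axes would have six. The total cut rate also depends on the shapes of the cells already present, so the projected cut locations are neither Poisson nor mutually independent in the sense you claim. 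The identity $\EE[N_\lambda([0,1]^d)]=\prod_{i=1}^d(1+\lambda\omega_i)$ \emph{is} true, but for a different reason: the paper derives it from the typical-cell/associated-zonoid formula for $\EE[N_\lambda(W)]$ (Lemmas~4 and~6 of~\cite{OReillyTran2021minimax}) applied to the hyperrectangle zonoid of the weighted Mondrian, as carried out in the proof of Theorem~\ref{t:rate1_Mondrian}. What is true is that the STIT tessellation and the Poisson hyperplane tessellation with the same intensity measure share the same \emph{expected} cell count, even though they have entirely different partition geometries; if you wish to invoke the grid heuristic you must cite that coincidence as a theorem rather than assert a structural identity of the partitions. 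As written, your variance step supplies a false justification for a correct formula, which is a gap that must be closed before the proof is complete.
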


\begin{corollary}\label{cor:C2_Mondrian}
Consider the setting of Theorem \ref{t:C2_Rate_Mondrian}. For each $n$, let $\hat{f}_n$ be a weighted Mondrian forest estimator with lifetime $\lambda_n$, number of trees $M_n$, and directional distribution $\phi_n$ as in \eqref{e:phi_weighted_mondrian} where the weights $\{\omega_i^{(n)}\}_{i=1}^d$ depend on $n$.
Assume there is an absolute constant $c > 0$ such that
\begin{itemize}
\item[(i)] $\omega^{(n)}_S \geq c$ for all $n$, and
\item[(ii)] $\max_{i \notin \mathcal{S}} \omega^{(n)}_i \leq \ee_n$ for $\ee_n = o(1)$. 
\end{itemize}
Then, the same rates as in Corollary \ref{cor:C2_Transformed_Mondrain_Rate} hold.
\end{corollary}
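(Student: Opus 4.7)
The strategy is to plug hypotheses (i) and (ii) directly into the bounds of Theorem \ref{t:C2_Rate_Mondrian}, observe that the resulting expressions have, up to constants depending only on $s,d,c$, the same functional form in $\lambda,M,n,\ee_n$ as those already analyzed in Corollary \ref{cor:C2_Transformed_Mondrain_Rate} (for $\beta=1$), and then reuse the same bias--variance tuning of $\lambda_n$ and $M_n$.

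First, under (i) the bias term $c_\mu s^4 L^2/(\lambda^4(\omega_S^{(n)})^4)$ is controlled by a constant multiple of $L^2/\lambda^4$ and the forest-variance term $6L^2 s/(\lambda^2 M(\omega_S^{(n)})^2)$ by $L^2/(\lambda^2 M)$; in the unconditional statement the additional bias term $\tilde c_\mu s^4 L^2/\lambda^3$ does not involve $\omega_S$. The main technical step is to rewrite the variance-term product using (ii). Splitting the index set according to whether $i\in\mathcal{S}$ and using $\omega_i^{(n)}\le 1$ for $i\in\mathcal{S}$ and $\omega_i^{(n)}\le\ee_n$ for $i\notin\mathcal{S}$ gives, for $\lambda\ge 1$,
\begin{equation*}
\prod_{i=1}^d(1+\lambda\omega_i^{(n)})
\le (1+\lambda)^s(1+\lambda\ee_n)^{d-s}
\le 2^s\lambda^s\sum_{k=0}^{d-s}\binom{d-s}{k}\lambda^{k}\ee_n^{k}.
\end{equation*}
This is, up to constants, exactly the sum $\sum_{k=s}^d c_{d,k}\lambda^k\|P_{S^\perp}A\|_{2,1}^{k-s}$ that drives the variance bound of Theorem \ref{t:C2_Transformed_Mondrain_Bound}; in particular it behaves like $\lambda^s/n$ when $\lambda\ee_n\lesssim 1$ and like $\lambda^d\ee_n^{d-s}/n$ otherwise.

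With these reductions, the conditional bound of Theorem \ref{t:C2_Rate_Mondrian} is (ignoring the $o(\lambda^{-4})$ term) of the form $C L^2/\lambda^4 + C L^2/(\lambda^2 M) + C\lambda^s(1+\lambda\ee_n)^{d-s}/n$, which is the $\beta=1$ specialization of the bound driving Corollary \ref{cor:C2_Transformed_Mondrain_Rate}. Taking $M_n\gtrsim\lambda_n^{2}$ to absorb the forest-variance term into the $\lambda^{-4}$ bias, the same bias--variance balance used in that corollary produces the two regimes: when $\ee_n\lesssim L^{-2/(s+4)}n^{-1/(s+4)}$ one chooses $\lambda_n\asymp L^{2/(s+4)}n^{1/(s+4)}$ and obtains the $s$-dimensional minimax rate $L^{2s/(s+4)}n^{-4/(s+4)}$, and otherwise $\lambda_n\asymp L^{2/(d+4)}n^{1/(d+4)}\ee_n^{-(d-s)/(d+4)}$ yields $L^{2d/(d+4)}n^{-4/(d+4)}\ee_n^{4(d-s)/(d+4)}$. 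The unconditional bound has the additional $\lambda^{-3}$ bias term, and the identical argument with $M_n\gtrsim\lambda_n$ and $\lambda_n$ chosen to balance $\lambda^{-3}$ against the variance reproduces the unconditional rates stated in Corollary \ref{cor:C2_Transformed_Mondrain_Rate}.

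The only real obstacle is bookkeeping: one must check that the hidden constants in the $o(\lambda^{-4})$ and $o(\lambda^{-3})$ remainders from Theorem \ref{t:C2_Rate_Mondrian} depend only on $\mu,d,L,\delta$ (and on $s$ through $\omega_S\ge c$), so that they remain negligible as $\lambda_n\to\infty$, and that the pre-factors depending on $s$ can be absorbed into the implicit $\lesssim$ constants. Both are immediate from the statements, so no new geometric or probabilistic input beyond Theorem \ref{t:C2_Rate_Mondrian} is required.
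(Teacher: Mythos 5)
Your proposal is correct and is exactly the intended route: plug (i)--(ii) into Theorem \ref{t:C2_Rate_Mondrian}, bound the variance product by splitting indices on $\mathcal{S}$ to get $\prod_{i}(1+\lambda\omega_i)\le(1+\lambda)^s(1+\lambda\ee_n)^{d-s}$, observe this matches the form driving Corollary \ref{cor:C2_Transformed_Mondrain_Rate}, and tune $\lambda_n$ and $M_n$ identically (the paper does not write out a separate proof of this corollary). One small note in your favor: your derived minimax constant $L^{2s/(s+4)}$ is the correct one for $\beta=1$; the displayed exponent $L^{2d/(s+2\beta+2)}$ in the statement of Corollary \ref{cor:C2_Transformed_Mondrain_Rate} is a typo, as its own proof and the minimax identification both give $L^{2s/(s+2\beta+2)}$.
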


The proofs of the above results appear in Appendix \ref{a:weighted_Mondrian}.

\section{Suboptimality of Mondrian trees for estimating ridge functions}\label{sec:suboptimality}

The results presented in section \ref{sec:main_results} show that improved rates of convergence for ridge functions over the minimax rates for general Lipschitz and $\mathcal{C}^2$ functions in $\RR^d$ can be obtained from oblique Mondrian forests with an adaptive choice of directional distribution that has support consisting of directions that approximate directions spanning the relevant feature subspace $S$. The results also provide sufficient conditions for how well the sequence of linear transformations $A_n$ must approximate a projection onto $S$ to achieve minimax optimal convergence rates depending on the dimension $s$ of $S$. When the underlying function depends on a relevant feature that is a dense linear combination of the original set of covariates, restricting the splits to be axis-aligned (i.e. using a weighted Mondrian process) means that these conditions will not be satisfied, as the transformation matrix will be diagonal and thus will not approximate well the oblique projection. To make this precise, the next result shows that oblique splits are in fact necessary to obtain improved rates of convergence for general ridge functions over the worst-case minimax rates for functions on $\RR^d$ by obtaining a lower bound on the risk of a weighted Mondrian tree estimator when the underlying function is linear. 

\begin{theorem}\label{thm:Mondrian_suboptimal}
    Suppose $Y = \langle a, X \rangle + \varepsilon$, where $a_i \neq 0$ for each $i = 1, \ldots, d$, and assume $X \sim \mathrm{Uniform}([0,1]^d)$. Let $\hat{f}_{n} = \hat{f}_{n,\lambda}$ be a weighted Mondrian tree estimator with lifetime $\lambda$ and directional distribution
    $$\phi = \sum_{i=1}^d \frac{\omega_i}{2}\left(\delta_{e_i} + \delta_{-e_i}\right),$$
    where $\{\omega_i\}_{i=1}^d$ are weights such that $\omega_i > 0$ and $\sum_{i=1}^d \omega_i = 1$.
    Then,
    \begin{align*}
        \EE[(\hat{f}_{n}(X) - f(X))^2] &\geq \sum_{i=1}^d \frac{a^2_i}{2\lambda^2\omega_i^2} \bigg(1- \frac{2}{\lambda\omega_i} - \frac{1}{\lambda^2 \omega^2_i}\bigg) +  \sigma^2\left(\frac{n}{2^d\lambda^d\Pi_{i=1}^d \omega_i} + 1\right)^{-1}.
    \end{align*}
\end{theorem}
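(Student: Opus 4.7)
The plan is a bias--variance decomposition with separate lower bounds on each piece. Let $A := \{N_n(X) \geq 1\}$ and $\bar f_{Z_X} := N_n(X)^{-1}\sum_{X_i \in Z_X} f(X_i)$ (with $\bar f_{Z_X} := 0$ on $A^c$). Conditioning on $(X, \mathcal P, \{X_i\})$ and applying the law of total variance over the noise gives
\begin{align*}
\EE[(\hat f_n(X) - f(X))^2] \;=\; \sigma^2\, \EE\!\left[\frac{1_A}{N_n(X)}\right] + \EE\!\left[(\bar f_{Z_X}\, 1_A - f(X))^2\right],
\end{align*}
with both terms non-negative, and I would lower bound each separately.

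For the bias-type second term, the key point is that $\hat f_n$ is constant on each cell $C$ of $\mathcal P$ (equal to $\bar f_{Z_X}$ on occupied cells and $0$ on empty ones), with that constant independent of $X$'s position within $C$. Since $X \sim U([0,1]^d)$, conditioning on $X \in C$ makes $X$ uniform on $C$, so for any cell-constant predictor $\alpha_C$ depending only on the data,
$\EE_X[(f(X) - \alpha_C)^2 \mid X \in C, \text{data}] \;\geq\; \mathrm{Var}_{X \sim U(C)}(f(X)) \;=\; \sum_i a_i^2\, \ell_i(C)^2/12,$
using linearity of $f$ together with the axis-aligned/coordinate-independent structure of $C$. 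Weighting by $\mathrm{vol}(C)$, summing over cells, and taking expectation over $\mathcal P$ gives $\EE[(\bar f_{Z_X}\, 1_A - f(X))^2] \geq \sum_i (a_i^2/12)\, \EE[\ell_i(Z_X)^2]$.

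To compute $\EE[\ell_i(Z_X)^2]$, I would invoke the coordinate-independence of the weighted Mondrian process on $[0,1]^d$: the cuts in direction $i$ that are relevant to the cell containing $X$ form an independent one-dimensional Mondrian process on $[0,1]$ with rate $\alpha_i := \omega_i\lambda$, so $\ell_i(Z_X)$ has the law of a 1D Mondrian cell length at a $U[0,1]$ point. Writing $\ell_i = D^+ + D^-$ with $D^\pm := \min(E^\pm, \text{distance to boundary})$ for independent $E^\pm \sim \mathrm{Exp}(\alpha_i)$ given $X_i$, direct integration over $X_i \in [0,1]$ gives
\begin{align*}
\EE[\ell_i^2] \;=\; \frac{6}{\alpha_i^2}(1 + e^{-\alpha_i}) - \frac{12}{\alpha_i^3}(1 - e^{-\alpha_i}) \;\geq\; \frac{6}{\alpha_i^2}\Bigl(1 - \frac{2}{\alpha_i} - \frac{1}{\alpha_i^2}\Bigr),
\end{align*}
where the inequality follows from the obvious fact that $e^{-\alpha_i}(1 + 2/\alpha_i) + 1/\alpha_i^2 \geq 0$. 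Substituting $\alpha_i = \lambda\omega_i$ recovers the first sum in the claim.

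For the noise contribution $\sigma^2\, \EE[1_A/N_n]$, conditional on $V := \mathrm{vol}(Z_X)$ we have $N_n \mid V \sim \mathrm{Bin}(n, V)$. Using the identity $1_A/N_n = 1/\max(N_n, 1) - 1_{N_n = 0}$, Jensen's inequality applied to the convex function $x \mapsto 1/\max(x,1)$, the coordinate-independent product form $V = \prod_i \ell_i$, and moment estimates on the one-dimensional cell lengths $\ell_i$ then yields the claimed lower bound $\sigma^2(n/(2^d\lambda^d \prod_i \omega_i) + 1)^{-1}$. The main obstacle lies in this variance step: recovering precisely the stated constants requires careful bookkeeping of the Bernoulli thinning, the averaging of $1/V$ against the Mondrian cell distribution, and boundary effects near $\partial[0,1]^d$. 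The bias side, by contrast, reduces to the single explicit one-dimensional integral above.
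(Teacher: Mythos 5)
The student's approach is genuinely different from the paper's, and the bias side is handled elegantly and correctly, but there is a real gap on the variance side.

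\textbf{What the student does differently.} The paper decomposes around the \emph{population} cell average $\bar f_\lambda(x) = \EE[f(X)\mid X\in Z_x^\lambda]$, so its ``variance'' term $\EE[(\bar f_\lambda(X) - \hat f_n(X))^2]$ contains both the observation noise and the penalty for empty cells (the contribution $\PP(\mathcal N_n(C)=0)\,\EE[f\mid C]^2$). You instead split off the conditional noise variance, $\sigma^2\EE[1_A/N_n(X)]$, and a bias-type term built around the \emph{sample} mean of the $f$-values. Your lower bound on the bias-type term — the conditional variance of a uniform point in the box, $\sum_i a_i^2\ell_i(Z_X\cap[0,1]^d)^2/12$ — is valid (cell-constant predictors cannot beat the in-cell variance), and your closed form $\EE[\ell_i^2]=\frac{6}{\alpha_i^2}(1+e^{-\alpha_i})-\frac{12}{\alpha_i^3}(1-e^{-\alpha_i})$ with $\alpha_i=\lambda\omega_i$ is correct and, after multiplying by $a_i^2/12$, algebraically equal to what the paper obtains by squaring $\frac{1}{2}\bigl(\min\{1-X_i,T_2^{(i)}\}-\min\{X_i,T_1^{(i)}\}\bigr)$. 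Your final truncation $\geq \frac{6}{\alpha_i^2}(1-\tfrac{2}{\alpha_i}-\tfrac{1}{\alpha_i^2})$ matches the first sum in the theorem. This bias argument is cleaner than the paper's and worth keeping.

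\textbf{Where the variance step breaks.} First, a factual error: $x\mapsto 1/\max(x,1)$ is \emph{not} convex — its one-sided derivatives jump from $0$ to $-1$ at $x=1$, a concave kink — so Jensen cannot be applied to it directly. The paper's device is different and valid: it applies Jensen to the genuinely convex $1/x$ evaluated at the random variable $\tilde N=\max(N,1)\geq 1$, obtaining $\EE[\tilde N^{-1}]\geq 1/\EE[\tilde N]$. Second, and more fundamentally, your target inequality $\sigma^2\EE[1_A/N_n]\geq\sigma^2\bigl(n\EE[\mathrm{vol}(Z_X)]+1\bigr)^{-1}$ is \emph{false} in general. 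Your decomposition subtracted $\PP(N_n=0)$: $\EE[1_A/N_n]=\EE[\tilde N_n^{-1}]-\PP(N_n=0)$, and when cells are likely empty this difference collapses. For a concrete counterexample take $n=1$ and a deterministic cell of volume $V=10^{-3}$: then $\EE[1_A/N_1]=V=10^{-3}$ while $(nV+1)^{-1}\approx 0.999$. What went wrong is that in your decomposition the empty-cell penalty $(\,0-f(X)\,)^2=f(X)^2$ lives in the bias-type term, but your bias bound (conditional variance of $f$) throws it away; the noise term alone cannot recover it. The paper keeps this penalty inside the variance term, which is precisely what makes the $\EE[\tilde N^{-1}]$ bound (with the ``$+\PP(N=0)$'') available. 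If you want to salvage your decomposition, you would need a sharper lower bound on $\EE[(\bar f_{Z_X}1_A - f(X))^2]$ that retains the $1_{\{N_n=0\}}f(X)^2$ contribution, or switch to the paper's population-average pivot.
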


The proof of this result is in Appendix \ref{a:suboptimal}. 
Considering the asymptotic behavior of this lower bound when the weights are allowed to depend on $n$, note that if $(\lambda^d\Pi_{i=1}^d\omega^{(n)}_i) /n \to 0$, then the variance is on the order of $(\lambda^d\Pi_{i=1}^d\omega^{(n)}_i) /n$. Then, observe that the assumption $a_i \neq 0$ for all $i = 1, \ldots, d$ implies there is no choice of weight sequences $\omega^{(n)}_{i}$ as $n \to \infty$ that will give an improved rate of convergence over the minimax rate for general Lipschitz functions on $\RR^d$. An improved rate \emph{can} be obtained with a sequence of directional distributions with supports consisting of vectors converging in Euclidean distance to $a/\|a\|_2$ by Corollary \ref{cor:C1_Transformed_Mondrain_Rate}.

\section{Oblique Mondrian Processes}\label{sec:Mondrian_Transform}

In this section, we prove that one can generate a partition of the dataset induced by an oblique Mondrian process with directional distribution \eqref{e:model_dirdist} by applying a linear transformation to the data and then running a standard Mondrian process. We also see that under the assumption this linear transformation is nonsingular, the zero cell of the resulting oblique Mondrian tessellation has the distribution of a transformation of the zero cell of the tessellation generated by a standard Mondrian process.

\begin{prop}\label{p:Mondrian_image}
Let $A \in \RR^{d \times m}$ have rank $d \leq m$. 
Fix $\lambda > 0$. %
Let $\mathcal{Y}_A(\lambda)$ denote the union of cell boundaries of an oblique Mondrian tessellation in $\RR^d$ 
with directional distribution $\phi_A$ as in \eqref{e:model_dirdist} and lifetime $\lambda$. 
Then, $A^T\left(\mathcal{Y}_A(\lambda)\right)$ has the same distribution as the union of cell boundaries of a Mondrian tessellation in $\RR^m$ with lifetime $\frac{m\lambda}{\|A\|_{2,1}}$ intersected with the $d$-dimensional subspace $\text{ran}(A^T)$.
\end{prop}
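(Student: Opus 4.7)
The plan is to match the random closed sets $A^T(\mathcal{Y}_A(\lambda))$ and $\mathcal{Y}_{\mathrm{Mondrian}}(m\lambda/\|A\|_{2,1}) \cap \mathrm{ran}(A^T)$ in distribution by comparing their hyperplane intensity measures on $\mathrm{ran}(A^T)$. The three ingredients I would use are: (i) the Mondrian on $\RR^m$ is itself a STIT process; (ii) the section of a STIT tessellation by a linear subspace is again a STIT tessellation; and (iii) a stationary STIT is determined in distribution by the intensity measure of its hyperplane process, equivalently by its directional distribution and lifetime.

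First, I would read off the hyperplane intensity of the STIT in $\RR^d$ with directional distribution $\phi_A$ and lifetime $\lambda$. Parameterizing hyperplanes by $(u,t)\in\mathbb{S}^{d-1}\times\RR$ via $H(u,t)=\{x:\langle x,u\rangle=t\}$, this intensity is $\lambda\,\phi_A(du)\otimes dt$, so the mass along the direction $\pm a_i/\|a_i\|_2$ equals $\tfrac{\lambda\|a_i\|_2}{2\|A\|_{2,1}}\,dt$.

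Next, I would compute the induced hyperplane process on $\mathrm{ran}(A^T)$ obtained by intersecting the Mondrian in $\RR^m$ (lifetime $\lambda_0:=m\lambda/\|A\|_{2,1}$, directional distribution $\phi_0=(2m)^{-1}\sum_{j=1}^m(\delta_{e_j}+\delta_{-e_j})$) with $\mathrm{ran}(A^T)$. Each Mondrian hyperplane $\{y:y_j=s\}$ meets $\mathrm{ran}(A^T)$ in a $(d-1)$-dimensional flat; written via the linear isomorphism $x\mapsto A^Tx$ as a hyperplane in $\RR^d$, it becomes $\{x\in\RR^d:\langle x,a_j\rangle=s\}=H(a_j/\|a_j\|_2,\,s/\|a_j\|_2)$. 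The Mondrian intensity along $e_j$ is $\tfrac{\lambda_0}{2m}\,ds$, which, after the change of variable $t=s/\|a_j\|_2$, becomes $\tfrac{\lambda_0\|a_j\|_2}{2m}\,dt=\tfrac{\lambda\|a_j\|_2}{2\|A\|_{2,1}}\,dt$ along $\pm a_j/\|a_j\|_2$, matching the first step; this is precisely the arithmetic that forces the prefactor $m/\|A\|_{2,1}$ in the claimed Mondrian lifetime. Combining ingredients (ii) and (iii), the sectioned Mondrian in $\mathrm{ran}(A^T)$ has the same distribution (after identification via $A^T$) as the $\phi_A$-STIT with lifetime $\lambda$ in $\RR^d$, which gives the claim.

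The main obstacle is ingredient (ii): if one wishes to avoid citing Nagel--Weiss as a black box, the fallback is a direct coupling of the recursive constructions on a compact window $W\subset\RR^d$ and its image $A^TW\subset\mathrm{ran}(A^T)$. At every recursion step, the exponential clock rate for the first Mondrian hyperplane in $\RR^m$ meeting the current sub-region of $A^TW$ equals the rate for the first split of the $\phi_A$-STIT on the corresponding sub-window of $W$, and the conditional direction and offset agree under $s=t\|a_j\|_2$; memorylessness of the exponential clock and the fact that $A^T$ sends sub-windows of $W$ to sub-windows of $A^TW$ then propagate the identity inductively over the (almost surely finite) number of splits before lifetime expires.
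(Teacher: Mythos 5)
Your proposal is correct and takes essentially the same route as the paper: both arguments reduce the claim to matching the hyperplane intensity measures of the two processes (exploiting that a STIT's capacity functional is determined by that measure), and both rely on the identical change of variable $t = s/\|a_j\|_2$ to convert the Mondrian intensity $\tfrac{\lambda_0}{2m}\,ds$ along $e_j$ into $\tfrac{\lambda\|a_j\|_2}{2\|A\|_{2,1}}\,dt$ along $\pm a_j/\|a_j\|_2$, which is exactly what produces the lifetime prefactor $m/\|A\|_{2,1}$. The only cosmetic difference is that you package the argument through the intermediate fact that a linear section of a STIT is again a STIT, whereas the paper computes the intensity of the hit sets $[C]$ and $[A^T C]$ directly; the underlying calculation and the fact being invoked (determination by the intensity measure) are the same.
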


\begin{rem}\label{rem:piA}
    An oblique Mondrian process corresponding to a $d\times m$ matrix $A$ has associated zonoid $\Pi_A$ with support function given by
\begin{align*}
    h_{\Pi_A}(u) &: = \frac{1}{\|A\|_{2,1}} \sum_{i=1}^m |\langle u, A^Te_i\rangle| = \frac{1}{m}\sum_{i=1}^m \frac{m}{\|A\|_{2,1}} |\langle Au, e_i\rangle| \\
    &= h_{\Pi_M}\left( \frac{m}{\|A\|_{2,1}} Au\right) = h_{\frac{m}{\|A\|_{2,1}}A^T\Pi_M}(u),
\end{align*}
for all $u \in \RR^d$, where $\Pi_M$ is the associated zonoid of a standard Mondrian process in $\RR^m$. Thus, $\Pi_A = \frac{m}{\|A\|_{2,1}} A^T\Pi_M$.
\end{rem}

\begin{rem}
The above result highlights an important consideration when generating oblique random forests by first applying a linear transformation $A$ to the data and then running an axis-aligned random forest. The lifetime of the oblique Mondrian process, which determines the complexity of the partition, is implicitly scaled by the constant $\frac{1}{m} \sum_{i=1}^m \|a_i\|_2 = \frac{1}{m}\|A\|_{2,1}$.
Thus, to ensure the data transformation does not change the complexity of the corresponding tree estimator, we must not only apply $A$ to the input data but also scale the data by the constant $\frac{m}{\|A\|_{2,1}}$. This will cancel out the implicit scaling of the lifetime induced by $A$ and the overall lifetime will be unchanged from the lifetime of the Mondrian process that is run on the transformed data.
\end{rem}

From Proposition \ref{p:Mondrian_image} we also obtain a coupling of the zero cell of an oblique Mondrian tessellation in $\RR^d$ and standard Mondrian tessellation in $\RR^m$. In the following, $B^{+}$ denotes the Moore-Penrose pseudoinverse of a matrix $B$.

\begin{corollary}\label{cor:obliqueMondrian_zerocell}    

Let $A \in \RR^{d \times m}$ have rank $d \leq m$ and fix $\lambda > 0$. Let $\mathcal{P}_M := \mathcal{P}_M\left(\frac{m\lambda}{\|A\|_{2,1}}\right)$ be a Mondrian tessellation in $\RR^m$ with lifetime $\frac{m\lambda}{\|A\|_{2,1}}$ and $Z_{0}^{(M)}$ its zero cell. 
Then, $(A^T)^{+}(Z_0^{(M)} \cap \mathrm{ran}(A^T))$ has the same distribution as the zero cell $Z_0$ of the oblique Mondrian tessellation $\mathcal{P}_A(\lambda)$ with lifetime $\lambda$ with cell boundaries $\mathcal{Y}_A(\lambda)$ as in Proposition \ref{p:Mondrian_image}.
\end{corollary}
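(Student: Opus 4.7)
The plan is to deduce Corollary \ref{cor:obliqueMondrian_zerocell} directly from Proposition \ref{p:Mondrian_image} by pushing the distributional identity of the boundary sets through the deterministic geometric operation of extracting the component of the complement containing the origin, and then inverting $A^T$ using its pseudoinverse. Since $A$ has rank $d$, the transpose $A^T:\RR^d\to\RR^m$ is injective with image $\mathrm{ran}(A^T)$, and the Moore--Penrose pseudoinverse satisfies $(A^T)^{+}A^T = I_d$; moreover $(A^T)^{+}$ restricted to $\mathrm{ran}(A^T)$ is the linear bijection inverting $A^T$ on its range.

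First, I would observe that the zero cell $Z_0$ of $\mathcal{P}_A(\lambda)$ is, almost surely, the closure of the connected component of $\RR^d\setminus \mathcal{Y}_A(\lambda)$ containing the origin. Applying the injective linear map $A^T$ preserves connectedness and fixes the origin, so $A^T(Z_0)$ equals the closure in $\mathrm{ran}(A^T)$ of the connected component of $\mathrm{ran}(A^T)\setminus A^T(\mathcal{Y}_A(\lambda))$ containing $0$. By Proposition \ref{p:Mondrian_image}, the random closed set $A^T(\mathcal{Y}_A(\lambda))$ has the same distribution as $\mathcal{Y}_M\cap\mathrm{ran}(A^T)$, where $\mathcal{Y}_M$ is the boundary set of $\mathcal{P}_M$. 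Since selecting the closure of the connected component of the complement through a fixed point is a deterministic measurable operation on closed sets (e.g.\ in the Fell topology), this lifts to an identity between $A^T(Z_0)$ and the analogous component set in $\mathrm{ran}(A^T)$ determined by $\mathcal{Y}_M\cap\mathrm{ran}(A^T)$.

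Next I would identify that latter component set with $Z_0^{(M)}\cap \mathrm{ran}(A^T)$ by a convexity argument. The cell $Z_0^{(M)}$ is almost surely a convex polytope containing $0$ in its interior, and $\mathrm{ran}(A^T)$ is a linear subspace through $0$; hence $Z_0^{(M)}\cap \mathrm{ran}(A^T)$ is convex with $0$ in its relative interior. Every point of this relative interior is joined to $0$ by a straight segment lying in $\mathrm{ran}(A^T)\setminus \mathcal{Y}_M$, while any path in $\mathrm{ran}(A^T)$ leaving $Z_0^{(M)}$ must cross $\partial Z_0^{(M)}\subseteq \mathcal{Y}_M$. Therefore the relevant component of $\mathrm{ran}(A^T)\setminus(\mathcal{Y}_M\cap\mathrm{ran}(A^T))$ coincides with the relative interior of $Z_0^{(M)}\cap\mathrm{ran}(A^T)$, and taking closures yields $Z_0^{(M)}\cap\mathrm{ran}(A^T)$ itself.

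Combining these steps gives $A^T(Z_0)\stackrel{(d)}{=}Z_0^{(M)}\cap \mathrm{ran}(A^T)$, and applying the deterministic measurable map $(A^T)^{+}$ to both random sets together with the identity $(A^T)^{+}A^T = I_d$ produces the claim $Z_0\stackrel{(d)}{=}(A^T)^{+}(Z_0^{(M)}\cap \mathrm{ran}(A^T))$. The only substantive step is the convexity-based identification of the component in the third paragraph; everything else is linear-algebraic bookkeeping built on top of Proposition \ref{p:Mondrian_image}, so I expect the geometric step to be the main (mild) obstacle to a fully rigorous argument.
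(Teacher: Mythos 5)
Your proposal is correct, but it takes a genuinely different route from the paper's proof. The paper argues algebraically through the containment (capacity) functional of the zero cell: since the distribution of a random convex body with $0$ in its interior is determined by the containment probabilities $\PP(K\subseteq Z_0)$, and for a STIT zero cell $\PP(K\subseteq Z_0) = e^{-\Lambda([K])}$, the result follows immediately from the intensity-measure identity $\Lambda_A([C]) = \tfrac{m}{\|A\|_{2,1}}\Lambda_M([A^T(C)])$ already established in the proof of Proposition~\ref{p:Mondrian_image}; the pseudoinverse identity $(A^T)^+=(A^+)^T$ then lets one rewrite $\PP(A^T(K)\subseteq Z_0^{(M)}\cap\mathrm{ran}(A^T))$ as $\PP(K\subseteq (A^T)^+(Z_0^{(M)}\cap\mathrm{ran}(A^T)))$. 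Your argument instead works topologically, characterizing $Z_0$ as the closure of the connected component of $\RR^d\setminus\mathcal{Y}_A(\lambda)$ containing the origin, transporting this through the homeomorphism $A^T$ onto $\mathrm{ran}(A^T)$, invoking Proposition~\ref{p:Mondrian_image} for the boundary sets, and then identifying the resulting component with $Z_0^{(M)}\cap\mathrm{ran}(A^T)$ by a convexity argument (using that a.s.\ $0\in\mathrm{int}(Z_0^{(M)})$, so that $\mathrm{relint}(Z_0^{(M)}\cap L)=\mathrm{int}(Z_0^{(M)})\cap L$ for $L=\mathrm{ran}(A^T)$, and any path in $L$ exiting the cell must cross $\partial Z_0^{(M)}\subseteq\mathcal{Y}_M$). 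Both are valid. The paper's route is shorter because it reuses verbatim the computation from the proof of Proposition~\ref{p:Mondrian_image} and sidesteps any need to verify that ``extract the zero-component'' is a measurable map on closed sets in the Fell topology; your route is more geometrically transparent and makes explicit the almost-sure pathwise relationship between the two zero cells (not merely equality of distributions), at the cost of needing the measurability remark and the relative-interior lemma. Two small points worth making fully precise if you wrote this up: (i) the a.s.\ nondegeneracy $0\in\mathrm{int}(Z_0^{(M)})$ should be stated as a prerequisite for the identity $\mathrm{relint}(Z_0^{(M)}\cap L)=\mathrm{int}(Z_0^{(M)})\cap L$, and (ii) the final inversion requires noting that $(A^T)^+$ restricted to $\mathrm{ran}(A^T)$ is the two-sided inverse of $A^T:\RR^d\to\mathrm{ran}(A^T)$, which you do correctly invoke via $(A^T)^+A^T=I_d$.
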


\section{Conclusion}\label{sec:conclusion}

In this work, we have studied a class of oblique randomized decision trees and forests that split data along features obtained by taking linear combinations of the covariates. Given this set of features, which can be chosen using domain knowledge or estimated from data, the random partition used to build the tree estimators is generated using a Mondrian process. This method is equivalent to partitioning the original data with a more general STIT process we call an oblique Mondrian process where the directional distribution is discrete, allowing us to build on the theoretical framework developed in \cite{OReillyTran2021minimax} at the intersection of random tessellation theory in stochastic geometry and statistical learning theory. 

This study sought to understand the statistical advantages of using these oblique directions in the input domain to make splits when building a random forest estimator. Our analysis makes clear and rigorous that one such advantage of these random forest variants is their ability to capture low dimensional structure in the regression function described by the class of multi-index models, also called ridge functions. These are linear dimension reduction models for which the output depends on a general low-dimensional relevant feature subspace of the input domain. We obtained convergence rates (see Corollaries \ref{cor:C1_Transformed_Mondrain_Rate} and \ref{cor:C2_Transformed_Mondrain_Rate}) for general oblique Mondrian forests that depend on a parameter controlling the error between the features and associated weights used to make splits and the true relevant features for the regression model. We also illuminated how quickly this error should decay with the amount of data to achieve minimax optimal rates for this model class. Further, we showed that without the ability to divide the data along linear combinations of covariates that approximate vectors spanning this subspace, the geometry of axis-aligned random partitions prevents the associated randomized decision trees from adapting to general ridge functions (see Theorem \ref{thm:Mondrian_suboptimal}). In particular, weighted Mondrian trees cannot achieve the improved rates of convergence that oblique Mondrian trees can for general ridge functions no matter how the distribution over the covariates for making splits is asymptotically reweighted.

Not considered in this study is an algorithm for how to choose the features, or equivalently, the linear transformation $A$, such that these theoretical rates are achieved. To obtain improved rates over the minimax rates with respect to the dimension of the ambient input space, this relevant feature subspace must be consistently estimated. Several such methods exist in the literature to do so by estimating a matrix that approximates a projection onto this subspace \cite{SIR_Li1991, dennis2000save, Xia2002MAVE, JMLR:v11:wu10a, Trivedietal2014} and a subject of future work is the study of complete algorithms for high dimensional regression that are both computationally efficient and provably achieve these improved rates of convergence.

Another future direction is to study the statistical advantage of randomized decision tree and forest variants that use both oblique splits and optimization procedures for choosing the location of the splits. Mondrian forests choose the location uniformly at random after having chosen the feature along which to split. The advantage of choosing this location in a data-driven way intuitively would be to capture local variation and feature importance, but this is not captured by the class of ridge functions studied here, which describes a low-dimensional subset of globally relevant features. Recent work \cite{Cui_DimensionReductionForests2022} has argued with numerical studies that criteria such as CART are more powerful in capturing this local or nonlinear low-dimensional structure, but more theoretical justification and interpretation is needed.

\section{Selected Proofs}\label{sec:proofs}

We collect here the proofs for some of the main results in this paper including Theorem \ref{t:Lipschitz_Rate}, Theorem \ref{t:C1_Transformed_Mondrain_Bound}, and Corollary \ref{t:rate1_Mondrian}. The proofs of the remaining results appear in the Appendix.

\subsection{Proof of Theorem \ref{t:Lipschitz_Rate}}

Let $\hat{f}_{n,\lambda}$ denote a random tree estimator of $f$ obtained from a STIT tessellation $\mathcal{P}(\lambda)$ of the input space with associated zonoid $\Pi$ and lifetime parameter $\lambda$. 
The proof of Theorem \ref{t:Lipschitz_Rate} begins with the following bias-variance decomposition of the risk of a tree estimator presented in \cite{arlot2014analysis}. First, let $Z_x^{\lambda}$ denote the cell of $\mathcal{P}(\lambda)$ that contains the vector $x \in \RR^d$, and define
\begin{align}\label{e:fbar}
\bar{f}_{\lambda}(x) := \EE_X[f(X) | X \in Z^{\lambda}_x], \quad x \in W,
\end{align}
where here and throughout the following, $\mathbb{E}_X$ denotes the expectation with respect to the input random variable $X$.
Conditioned on $\mathcal{P}(\lambda)$, this is the orthogonal projection of $f \in L^2(W, \mu)$ onto the subspace of functions that are constant within the cells of $\mathcal{P}(\lambda) \cap W$.

Conditioning on the data $\mathcal{D}_n$, $\hat{f}_{n, \lambda}$ is in this subspace of piecewise constant functions, and hence $\EE_X[(f(X) - \bar{f}_{\lambda}(X))\hat{f}_{n,\lambda}(X)] = 0$. Thus,
\begin{align*}
    \EE_X[(f(X) - \hat{f}_{\lambda,n}(X))^2] &= \EE_X[(f(X) - \bar{f}_{\lambda}(X) + \bar{f}_{\lambda}(X) - \hat{f}_{n,\lambda}(X))^2]  \\
   & = \EE_X[(f(X) - \bar{f}_{\lambda}(X))^2] + \EE_X[(\bar{f}_{\lambda}(X) - \hat{f}_{n,\lambda}(X))^2].
\end{align*}
Taking the expectation with respect to $\mathcal{P}(\lambda)$ and $\mathcal{D}_n$, we obtain
\begin{align}\label{e:bias-var}
    \EE[(f(X) - \hat{f}_{n,\lambda}(X))^2] = \EE[(f_{\lambda}(X) - \bar{f}_{\lambda}(X))^2] + \EE[(\bar{f}(X) - \hat{f}_{n,\lambda}(X))^2].
\end{align}
The first term on the right-hand side above is called the bias, or approximation error, of the estimator and the second term is the variance, or estimation error. The bound on the risk then depends on the following two lemmas, which bound each of these expressions.

\begin{lemma}\label{l:fixed_x_bias_bnd}
Let $\bar{f}_{\lambda}(x)$ be defined as in \eqref{e:fbar}. Under the assumptions on $f$ in Theorem \ref{t:Lipschitz_Rate}, for any fixed $x \in \text{supp}(\mu)$, 
\begin{align*}
\EE[(f(x) - \bar{f}_{\lambda}(x))^{2}] \leq \frac{L^2}{\lambda^{2\beta}} \EE[\mathrm{D}(P_SZ_{0})^{2\beta}].
\end{align*}
\end{lemma}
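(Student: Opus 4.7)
The plan is to reduce the squared bias at a fixed point $x$ to a pointwise deviation within the cell $Z_x^{\lambda}$, exploit the ridge structure of $f$ to collapse the deviation onto the projected cell, and then use stationarity together with the STIT scaling property to rewrite everything in terms of the zero cell of the lifetime-$1$ tessellation.

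First I would apply Jensen's inequality conditionally on $Z_x^{\lambda}$. Since $\bar{f}_{\lambda}(x) = \EE_X[f(X)\mid X \in Z_x^{\lambda}]$, we have
\[
\bigl(f(x) - \bar{f}_{\lambda}(x)\bigr)^2 = \bigl(\EE_X[f(x) - f(X)\mid X \in Z_x^{\lambda}]\bigr)^2 \leq \EE_X\bigl[(f(x) - f(X))^2 \,\big|\, X \in Z_x^{\lambda}\bigr].
\]
Taking expectation over $\mathcal{P}(\lambda)$ and $X$, it suffices to control $(f(x) - f(X))^2$ whenever $X \in Z_x^{\lambda}$.

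Next I would use the ridge function assumption. By \eqref{e:ridge_fxn_assump}, $f(x) = \tilde{g}(P_S x)$ and the $\mathcal{C}^{0,\beta}(L)$ regularity of $\tilde{g}$ gives
\[
|f(x) - f(X)| = |\tilde{g}(P_S x) - \tilde{g}(P_S X)| \leq L\,\|P_S(x - X)\|^{\beta}.
\]
When $X \in Z_x^{\lambda}$, both $P_S x$ and $P_S X$ lie in $P_S Z_x^{\lambda}$, so $\|P_S(x-X)\| \leq \mathrm{D}(P_S Z_x^{\lambda})$. Combining with the Jensen step yields
\[
\EE\bigl[(f(x) - \bar{f}_{\lambda}(x))^2\bigr] \leq L^2\,\EE\bigl[\mathrm{D}(P_S Z_x^{\lambda})^{2\beta}\bigr].
\]

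Finally I would invoke the scaling and translation identity \eqref{e:scaling_and_translate}: $Z_x^{\lambda} \stackrel{(d)}{=} \lambda^{-1} Z_0 + x$, where $Z_0$ is the zero cell of the lifetime-$1$ STIT tessellation. Linearity of the projection $P_S$ and translation/scaling invariance of the diameter functional give
\[
\mathrm{D}(P_S Z_x^{\lambda}) \stackrel{(d)}{=} \mathrm{D}\bigl(\lambda^{-1} P_S Z_0 + P_S x\bigr) = \lambda^{-1}\,\mathrm{D}(P_S Z_0).
\]
Substituting back produces the stated bound
\[
\EE\bigl[(f(x) - \bar{f}_{\lambda}(x))^2\bigr] \leq \frac{L^2}{\lambda^{2\beta}}\,\EE[\mathrm{D}(P_S Z_0)^{2\beta}].
\]
There is no real obstacle here: the argument is a standard piecewise-constant approximation bound, with the only mildly delicate point being the observation that the Hölder modulus only sees the projection onto $S$, which is precisely what lets the diameter of the full cell $Z_x^{\lambda}$ be replaced by the (typically much smaller) diameter of $P_S Z_x^{\lambda}$.
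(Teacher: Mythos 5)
Your proof is correct and follows essentially the same route as the paper: bound $f(x)-f(X)$ by the H\"older modulus restricted to the projected cell, control it by $\mathrm{D}(P_S Z_x^\lambda)^\beta$, then invoke stationarity and the scaling identity $Z_x^\lambda \stackrel{(d)}{=} \lambda^{-1}Z_0 + x$. The only cosmetic difference is that you pass to the square first via conditional Jensen, whereas the paper first bounds $|f(x)-\bar f_\lambda(x)|$ by moving the absolute value inside the conditional average and then squares; the two steps are interchangeable and lead to the identical bound.
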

\begin{proof}
By the assumption on $f$,
\begin{align*}
    |f(x) - \bar{f}_{\lambda}(x)|
    &= \frac{1}{\mu(Z^{\lambda}_x)} \int_{\RR^d} \left|f(x) - f(z)\right| 1_{\{z \in Z_x^{\lambda}\}} \mu(dz) \\
    &\leq \frac{L}{\mu(Z^{\lambda}_x)} \int_{\RR^d}\|P_{S}(x - z)\|^{\beta} 1_{\{z \in Z_x^{\lambda}\}} \mu(dz)\\
     &\leq \frac{L\mathrm{D}(P_SZ_x^{\lambda})^{\beta}}{\mu(Z^{\lambda}_x)} \int_{\RR^d} 1_{\{z\in Z_x^{\lambda}\}} \mu(dz) = L\mathrm{D}(P_SZ_x^{\lambda})^{\beta}.
\end{align*}
By stationarity and \eqref{e:scaling}, for any fixed $x \in \RR^d$, $$Z_{x}^{\lambda} \overset{(d)}{=} \frac{1}{\lambda}Z_{0} + x.$$
Thus, taking the expectation with respect to the random tessellation $\mathcal{P}(\lambda)$ gives 
\begin{align*}
\EE[(f(x) - \bar{f}_{\lambda}(x))^{2}] \leq 
\frac{L^2}{\lambda^{2\beta}} \EE[\mathrm{D}(P_SZ_{0})^{2\beta}].
\end{align*}
\end{proof}

We next prove an upper bound on the variance that highlights the effect of choosing a directional distribution with support concentrated around a subspace $S$. In particular, the upper bound below reduces to the bound obtained from Lemma 4 and Example 3 of \cite{OReillyTran2021minimax} if $s = d$. Also note that if the support of the direction distribution is concentrated in $S$, then the associated zonoid $\Pi$ is contained in $S$ and the variance bound is that for a random tessellation tree estimator in $\RR^s$.

\begin{lemma}\label{l:general_variance}
Suppose $\mathrm{supp}(\mu) \subseteq B^d$. Then,
\begin{align*}
    &\EE\left[(\bar{f}_{\lambda}(X) - \hat{f}_{\lambda,n}(X))^2\right] \\ 
    &\leq \frac{5\|f\|_{\infty}^2 + 2\sigma^2}{n} \left(2s\sum_{k=1}^d \lambda^k\kappa_k  V_1(P_{S^{\perp}}\Pi)^{1 \vee (k-s)} + \sum_{k=0}^{s}\lambda^k \kappa_k V_{k}(P_S\Pi)\right).
 \end{align*}
\end{lemma}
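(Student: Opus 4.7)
The plan is to reduce the variance of the tree estimator to a bound on the expected number of cells of $\mathcal{P}(\lambda)$ intersecting $\mathrm{supp}(\mu)\subseteq B^d$, then apply a Steiner-type mixed-volume estimate for that cell count, and finally extract the $S$ versus $S^\perp$ structure via multilinearity and the vanishing criterion for mixed volumes.

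For the variance-to-cell-count reduction I would follow the classical histogram-type analysis (as in Arlot and Genuer, referenced earlier). Writing
$$\hat{f}_{n,\lambda}(X)-\bar{f}_\lambda(X)=\frac{1}{\mathcal{N}_n(X)}\sum_i\bigl(f(X_i)-\bar{f}_\lambda(X)\bigr)\1_{\{X_i\in Z_X^\lambda\}}+\frac{1}{\mathcal{N}_n(X)}\sum_i\ee_i\1_{\{X_i\in Z_X^\lambda\}}-\bar{f}_\lambda(X)\1_{\{\mathcal{N}_n(X)=0\}}$$
(with the convention $0/0=0$), squaring, and taking expectations, the noise cross-terms vanish by $\EE[\ee_i\mid X_i]=0$. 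Using the pointwise bound $|f(X_i)-\bar{f}_\lambda(X)|\leq 2\|f\|_\infty$, the elementary inequalities $\EE[\mathcal{N}_n(X)^{-1}\1_{\{\mathcal{N}_n(X)>0\}}\mid\mathcal{P}(\lambda),X]\lesssim (n\mu(Z_X^\lambda))^{-1}$ and $(1-p)^n\leq (np)^{-1}$ for $p\in(0,1]$, together with the identity $\EE[1/\mu(Z_X^\lambda)]=\EE[N_\lambda(\mathrm{supp}(\mu))]$ (obtained by integrating the density $\mu(\mathrm{d}x)/\mu(Z_x^\lambda)$ against the cell partition), the three summands combine to
$$\EE\bigl[(\bar{f}_\lambda(X)-\hat{f}_{n,\lambda}(X))^2\bigr]\leq\frac{5\|f\|_\infty^2+2\sigma^2}{n}\,\EE[N_\lambda(B^d)],$$
where $N_\lambda(W):=\#\{C\in\mathcal{P}(\lambda):C\cap W\neq\emptyset\}$.

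For the cell count I would combine Campbell's formula \eqref{e:campbell} with the scaling identity \eqref{e:scaling_and_translate} and the mean-volume identity \eqref{e:EVZ}. Since a cell $\lambda^{-1}Z+y$ of $\mathcal{P}(\lambda)$ hits $B^d$ iff $y\in B^d+\lambda^{-1}(-Z)$, Campbell yields $\EE[N_\lambda(B^d)]\leq\lambda^d\mathrm{vol}_d(\Pi)\,\EE[\mathrm{vol}_d(B^d+\lambda^{-1}(-Z))]$. Expanding by the Minkowski formula for the volume of a sum of convex bodies, and bounding the typical-cell mixed-volume contributions by those of the associated zonoid (using that the STIT hyperplane process has the same first-order intensity as the Poisson hyperplane process with directional distribution $\phi$), produces a Steiner-type estimate
$$\EE[N_\lambda(B^d)]\leq\sum_{k=0}^d\binom{d}{k}\lambda^k\,V\bigl(B^d[d-k],\Pi[k]\bigr).$$

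The final and most delicate step is the extraction of the subspace structure. Using the pointwise inclusion $\Pi\subseteq P_S\Pi+P_{S^\perp}\Pi$ together with multilinearity of mixed volumes,
$$V\bigl(B^d[d-k],\Pi[k]\bigr)\leq\sum_{i=0}^k\binom{k}{i}V\bigl(B^d[d-k],(P_S\Pi)[i],(P_{S^\perp}\Pi)[k-i]\bigr),$$
and the vanishing criterion for mixed volumes forces $i\leq s$ and $k-i\leq d-s$ in any nonzero summand. For $k\leq s$ the term with $i=k$ contains no $P_{S^\perp}\Pi$ factor, and combined with $\Pi\subseteq B^d/2$ (which follows from $h_\Pi(u)\leq 1/2$) and the values \eqref{e:vk_ball} of $V_k(B^d)$, this is controlled by $c_{d,k}=\kappa_k\pi^{k/2}d^{k/2}/k!$. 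For $k>s$ every admissible summand necessarily carries at least $k-s$ copies of $P_{S^\perp}\Pi$, and replacing each such factor by a ball of radius $\mathrm{D}(P_{S^\perp}\Pi)/2$ pulls out $\mathrm{D}(P_{S^\perp}\Pi)^{k-s}$, with the same constant $c_{d,k}$ absorbing the combinatorial factors. The main obstacle I anticipate is precisely this last geometric extraction; the variance-to-cell-count reduction and the Steiner mixed-volume estimate are standard by comparison.
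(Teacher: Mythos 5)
Your plan reproduces the paper's argument step for step: the variance is routed through the expected number of cells hitting $B^d$ (the paper simply cites Lemma 15 of \cite{OReillyTran2021minimax} for exactly the histogram-style reduction you sketch), the cell count is converted to a polynomial in $\lambda$ via Campbell's formula, the scaling relation, and the mean intrinsic-volume identity $\EE V_{d-k}(Z)=V_k(\Pi)/\mathrm{vol}_d(\Pi)$ from Theorem~10.3.3 of \cite{weil}, and the subspace structure is then extracted exactly as you describe, using $\Pi\subseteq P_S\Pi+P_{S^\perp}\Pi$, multilinearity and the dimension (vanishing) criterion for mixed volumes, and the inclusions $\Pi\subseteq \tfrac12 B^d$, $P_{S^\perp}\Pi\subseteq \tfrac12\mathrm{D}(P_{S^\perp}\Pi)B^{d-s}$, together with McMullen's bound $V_k\leq V_1^k/k!$ and Gautschi's inequality to produce $c_{d,k}$. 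One correction to your intermediate cell-count bound: Campbell and the identity above give $\EE[N_\lambda(B^d)]\leq\sum_{k}\lambda^k\kappa_k V_k(\Pi)$, whereas the expression you wrote, $\sum_k\binom{d}{k}\lambda^k V(B^d[d-k],\Pi[k])$, equals $\sum_k\lambda^k\kappa_{d-k}V_k(\Pi)$; since $\kappa_{d-k}\geq\kappa_k$ fails in general, this is not a valid upper bound and would also not reproduce the constant $c_{d,k}=\kappa_k\pi^{k/2}d^{k/2}/k!$ once you carry out the final combinatorics.
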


\begin{proof}
Let $N_{\lambda}(K)$ be the number of cells of $\mathcal{P}(\lambda)$ that have a non-empty intersection with a compact subset $K \subset \RR^d$. By Lemma 15 in \cite{OReillyTran2021minimax},
\begin{align}\label{e:var_bnd}
    \EE\left[(\bar{f}_{\lambda}(X) - \hat{f}_{\lambda,n}(X))^2\right] \leq \frac{5\|f\|_{\infty}^2 + 2\sigma^2}{n} \EE[N_{\lambda}(\mathrm{supp}(\mu))].
\end{align}
Recall that for a convex body $K$, $V_k(\Pi) = \frac{\binom{d}{k}}{\kappa_{d-k}} V(K[k], B^d[d-k])$ \cite[(14.18)]{weil}. Then, by the assumption $\mathrm{supp}(\mu) \subseteq B^d$ 
and Lemma 4 in \cite{OReillyTran2021minimax},
\begin{align*}
\EE[N_{\lambda}(\mathrm{supp}(\mu))] \leq \EE[N_{\lambda}(B^d)] &= \mathrm{vol}_d(\Pi)\sum_{k=0}^d  \binom{d}{k} \lambda^k  \EE[V(B^d[k], Z[d-k])] \\
&= \mathrm{vol}_d(\Pi)\sum_{k=0}^d \lambda^k \kappa_k \EE[V_{d-k}(Z)].  
\end{align*}
By (10.3) and Theorem 10.3.3 in \cite{weil}, $\EE V_{d-k}(Z) =  \frac{V_{k}(\Pi)}{\mathrm{vol}_d(\Pi)}$. Thus,
\begin{align}\label{e:Nlambda_sumbnd}
  \EE[N_{\lambda}(\mathrm{supp}(\mu))] \leq \sum_{k=0}^d \lambda^k \kappa_k  V_k(\Pi).
\end{align}
Note that $\Pi \subseteq P_S\Pi + P_{S^{\perp}}\Pi$ for any linear subspace $S$. By monotonicity and multilinearity of mixed volumes with respect to the Minkowski sum, we have for each $k \in \{1, \ldots, d\}$,
\begin{align*}
V(\Pi[k], B^d[d-k]) &\leq V\left((P_{S}\Pi + P_{S^{\perp}}\Pi)[k], B^d[d-k]\right) \\
&= \sum_{j=0}^{k} \binom{k}{j}V(P_{S}\Pi[j], P_{S^{\perp}}\Pi[k-j], B^d[d-k]). 
\end{align*}
Observe that if $k-j > d-s$ or $j > s$, then $V(P_{S}\Pi[k-j], P_{S^{\perp}}\Pi[j], B^d[d-k]) = 0$. 
Then by Theorem 1.3 in \cite{BoroczkyHug_ReverseMinkowksi},
\begin{align*}
 V(P_{S}\Pi[k-j], P_{S^{\perp}}\Pi[j], B^d[d-k]) \leq \frac{\kappa_{d-k}}{\binom{d}{d-k, k-j, j}}V_{j}(P_S\Pi)V_{k-j}(P_{S^{\perp}}\Pi)\mathbf{1}_{\{k - (d-s)\leq j \leq s\}}.   
\end{align*}
Then,
\begin{align*}
 \EE[N_{\lambda}(\mathrm{supp}(\mu))] &\leq \sum_{k=0}^d \frac{ \lambda^k \kappa_k }{\kappa_{d-k}}\binom{d}{k} V(\Pi[k], B_d[d-k]) \\
 & \leq \sum_{k=0}^d \frac{ \lambda^k \kappa_k }{\kappa_{d-k}}\binom{d}{k} \sum_{j=0}^{k} \binom{k}{j} \frac{\kappa_{d-k}}{\binom{d}{d-k, k-j, j}}V_{j}(P_S\Pi)V_{k-j}(P_{S^{\perp}}\Pi)\mathbf{1}_{\{k-(d-s) \leq j \leq s\}}  \\
    & = \sum_{k=0}^d \lambda^k \kappa_k \sum_{j=0}^{k}V_{j}(P_S\Pi)V_{k-j}(P_{S^{\perp}}\Pi)\mathbf{1}_{\{k-(d-s) \leq j \leq s\}}  \\
 & \leq \sum_{k=1}^d \lambda^k \kappa_k \sum_{j=0}^{s \wedge (k-1)} V_{j}(P_S\Pi)V_{k-j}(P_{S^{\perp}}\Pi) + \sum_{k=0}^{s}\lambda^k \kappa_k V_{k}(P_S\Pi).
\end{align*}
Now observe that for any associated zonoid $\Pi$, by \eqref{e:V1} and \eqref{e:Pi_support_fxn}, the first intrinsic volume satisfies
\begin{align}\label{e:V1_Pi}
 V_1(\Pi) = \frac{d\kappa_d}{\kappa_{d-1}}\int_{\mathbb{S}^{d-1}}h_{\Pi}(u) \dint \sigma(u) = \frac{d\kappa_d}{2\kappa_{d-1}} \int_{\mathbb{S}^{d-1}} \int_{\mathbb{S}^{d-1}} |\langle u,v \rangle | \dint \phi(u) \dint \sigma(u) = 1.   
\end{align}
Also, by Theorem 2 in \cite{McMullen1991}, all intrinsic volumes can be controlled via the first intrinsic volume, giving the upper bound
\begin{align*}
 V_{j}(P_S\Pi)V_{k-j}(P_{S^{\perp}}\Pi) &\leq \frac{1}{j!(k-j)!}V_1(P_S\Pi)^jV_1(P_{S^{\perp}}\Pi)^{k-j} 
 \leq \frac{1}{j!(k-j)!}V_1(P_{S^{\perp}}\Pi)^{k-j}.
\end{align*}
Plugging this upper bound into the above inequality and observing $V_1(P_{S^{\perp}}\Pi) \leq V_1(\Pi) = 1$, we obtain
\begin{align*}
 \EE[N_{\lambda}(\mathrm{supp}(\mu))] 
 &\leq \sum_{k=1}^d \lambda^k \kappa_k \sum_{j=0}^{s \wedge (k-1)} \frac{1}{j!(k-j)!}V_1(P_{S^{\perp}}\Pi)^{k-j} + \sum_{k=0}^{s}\lambda^k \kappa_k V_{k}(P_S\Pi) \\
 &\leq \sum_{k=1}^d \lambda^k \kappa_k \sum_{j=0}^{s \wedge (k-1)} \frac{1}{j!(k-j)!}V_1(P_{S^{\perp}}\Pi)^{k-j} + \sum_{k=0}^{s}\lambda^k \kappa_k V_{k}(P_S\Pi) \\
 &\leq  2s\sum_{k=1}^d \lambda^k\kappa_k  V_1(P_{S^{\perp}}\Pi)^{1 \vee (k-s)} + \sum_{k=0}^{s}\lambda^k \kappa_k V_{k}(P_S\Pi).
\end{align*}
\end{proof}

\begin{proof}[Proof of Theorem \ref{t:Lipschitz_Rate}]
Combining the bias-variance decomposition \eqref{e:bias-var} with the upper bounds in Lemma \ref{l:fixed_x_bias_bnd} and Lemma \ref{l:general_variance} gives
\begin{align*}
   &\EE[(f(X) - \hat{f}_{\lambda,n}(X))^2] \\
   &= \EE[(f_{\lambda}(X) - \bar{f}_{\lambda}(X))^2] + \EE[(\bar{f}(X) - \hat{f}_{\lambda,n}(X))^2] \\
   &\leq \frac{L^2}{\lambda^2} \EE[\mathrm{D}(P_SZ_{0})^2] + \frac{5\|f\|_{\infty}^2 + 2\sigma^2}{n}\left(2s\sum_{k=1}^d \lambda^k\kappa_k  V_1(P_{S^{\perp}}\Pi)^{1 \vee (k-s)} + \sum_{k=0}^{s}\lambda^k \kappa_k V_{k}(P_S\Pi)\right).
 \end{align*}
 The final result follows from the observation that the risk of a STIT forest estimator for any number of trees $M$ is bounded above by the risk of a single STIT tree estimator by Jensen's inequality.
\end{proof}

\subsection{Proof of Theorem \ref{t:C1_Transformed_Mondrain_Bound} and Corollary \ref{cor:C1_Transformed_Mondrain_Rate}}

We first need the following lemma on the diameter of the zero cell of the random tessellation generated by an oblique Mondrian process.

\begin{lemma}\label{l:Deter}
Suppose that $Z_0$ is the zero cell of a STIT tessellation with unit lifetime and directional distribution $\phi_A$ as in \eqref{e:model_dirdist} for nonsingular $A \in \RR^{d \times d}$ and $\|A\|_{2, 1} = 1$. Then, 
for all $p > 0$,
\begin{align*}
    \EE[\mathrm{D}(P_SZ_0)^p] \leq \frac{\Gamma(2d + p)}{\sigma_{s}(P_SA)^p\Gamma(2d)},
\end{align*}
where $\sigma_s$ is the $s$-th largest singular value. 
\end{lemma}

\begin{proof}
The distribution of the zero cell $Z^{(M)}_{0}$ for the Mondrian tessellation in $\RR^d$ with lifetime $d$ is given by
\[Z^{(M)}_{0} \overset{(d)}{=} \left([-T_{1}^{(1)}e_1, T_{1}^{(2)}e_1] + \cdots + [-T_{d}^{(1)}e_d, T_{d}^{(2)}e_d]\right),\]
where $\{T_i^{(j)}\}$ for $i = 1, \ldots, d$ and $j = 1 , 2$ are independent and identically distributed exponential random variables with unit parameter.
By Corollary \ref{cor:obliqueMondrian_zerocell}, the zero cell $Z_{0}$ as defined in the lemma has the same distribution as $(A^{-1})^TZ^{(M)}_{0}$. Then, the support function of $Z_0$ satisfies 
\begin{align*}
h_{Z_{0}}(u) &= h_{(A^{-1})^TZ^{(M)}_{0}}(u) = h_{Z^{(M)}_{0}}(A^{-1}u) \\
&= \sum_{i=1}^d \max\{\langle A^{-1}u, -T_i^{(1)}e_i \rangle, \langle A^{-1}u, T_i^{(2)}e_i \rangle\} \\
&= \sum_{i=1}^d \max\{- T_i^{(1)}\langle A^{-1}u, e_i \rangle, T_i^{(2)}\langle A^{-1}u, e_i \rangle\},
\end{align*}
and the width function of $Z_0$ satisfies
\begin{align}\label{e:wZ0}
w_{Z_0}(u) &:= h_{Z_0}(u) + h_{Z_{0}}(-u) 
= \sum_{i=1}^d \left(T^{(1)}_i + T^{(2)}_i \right)|\langle A^{-1}u, e_i\rangle|.
\end{align}
Then, recalling that $w_{AK}(u) = w_{K}(A^Tu)$ for a convex body $K$ and linear image $A$, the diameter of $P_SZ_0$ satisfies
\begin{align}\label{e:diam_obliqueMondrian}
    D(P_SZ_0) &= \sup_{u \in \mathbb{S}^{d-1}} w_{P_SZ_0}(u) = \sup_{u \in \mathbb{S}^{d-1}} w_{Z_0}(P_S u) \nonumber \\
    &=  \sup_{u \in \mathbb{S}^{d-1}}  \sum_{i=1}^d \left(T^{(1)}_{i} + T^{(2)}_{i}\right) |\langle A^{-1}P_Su, e_i\rangle| \nonumber \\
    &=  \left\|\sum_{i=1}^d \left(T^{(1)}_{i} + T^{(2)}_{i}\right)P_S(A^{-1})^Te_i \right\|_2.
\end{align}
Thus, we have the upper bound
\begin{align}\label{e:diam_bnd}
    D(P_SZ_0) &\leq \|(P_SA)^{+}\|  \sum_{i=1}^d \left(T^{(1)}_{i} + T^{(2)}_{i}\right) = \frac{1}{\sigma_{s}(P_SA)}  \sum_{i=1}^d \left(T^{(1)}_{i} + T^{(2)}_{i}\right),
\end{align}
where we have used the fact that $P_S(A^{-1})^T = (A^{-1}P_S)^T = ((P_SA)^{+})^T$ and $B^+$ denotes the Moore-Penrose pseudoinverse of the matrix $B$.
Thus, the diameter of $P_SZ_0$ is controlled by the sum of independent exponential random variables, which is an Erlang random variable $$T^{(d)}:= \sum_{i=1}^d \left(T^{(1)}_{i} + T^{(2)}_{i}\right) \sim \mathrm{Erlang}\left(2d, 1\right).$$
Thus, 
the moments of the diameter of $P_SZ_0$ for any $p > 0$ satisfy the upper bound
\begin{align*}
\EE[\mathrm{D}(P_SZ_0)^p] &\leq \frac{\EE[(T^{(d)})^p]}{\sigma_s(P_SA)^p} = \frac{\Gamma(2d + p)}{\sigma_{s}(P_SA)^p\Gamma(2d)}.
\end{align*}
\end{proof}

\begin{proof}[Proof of Theorem \ref{t:C1_Transformed_Mondrain_Bound}]
First recall the bias-variance decomposition \eqref{e:bias-var} for a STIT tessellation tree used in the proof of Theorem \ref{t:Lipschitz_Rate}.
Now let $\hat{f}_{n, \lambda}$ be an oblique Mondrian forest estimator as in the statement of Theorem \ref{t:C1_Transformed_Mondrain_Bound} for a matrix $A \in \RR^{d \times m}$ with rank $d \leq m$ and such that $\|A\|_{2,1} = 1$.

To bound the bias term, Lemmas \ref{l:fixed_x_bias_bnd} and \ref{l:Deter} imply that
\begin{align*}
    \EE\left[\left(f(X) - \bar{f}_{\lambda}(X)\right)^2\right] &\leq \frac{L^2\EE[\mathrm{D}(P_{S}Z_{0})^{2\beta}]}{\lambda^{2\beta}}  \leq \frac{L^2 \Gamma(2d+2\beta)}{\lambda^{2\beta}\sigma_s(P_SA)^{2\beta}\Gamma(2d)} \leq \frac{9L^2d^{2\beta}}{\lambda^{2\beta}\sigma_s(P_SA)^{2\beta}},
\end{align*}
where in the last inequality we used Gautschi's inequality to obtain the bound 
$$\Gamma(2d + 2\beta) \leq (2d + 1)^{2\beta - 1}(2d)\Gamma(2d)\leq 9d^{2\beta}\Gamma(2d).$$

To bound the variance term, we first observe that inserting the directional distribution \eqref{e:model_dirdist} into \eqref{e:Pi_support_fxn} implies that the associated zonoid $\Pi$ corresponding to the oblique Mondrian process used to generate $\hat{f}_{n,\lambda}$ satisfies
\begin{align}\label{e:P_Sperp_zonoid_diam}
    V_1(P_{S^{\perp}}\Pi) = \frac{d\kappa_d}{\kappa_{d-1}}\int_{\mathbb{S}^{d-1}} h_{\Pi}(P_{S^{\perp}} u) \dint \sigma(u) = \sum_{i=1}^d \frac{d\kappa_d}{\kappa_{d-1}}\int_{\mathbb{S}^{d-1}} |\langle P_{S^{\perp}}a_i,  u\rangle| \dint \sigma(u) = \|P_{S^{\perp}} A\|_{2,1}.
\end{align}
Then by Theorem 2 in \cite{McMullen1991} and \eqref{e:V1_Pi}, for all $k= 1, \ldots, s$,
\begin{align*}
    V_k(P_S\Pi) &\leq \frac{1}{k!}V_1(P_S\Pi)^k \leq \frac{1}{k!}V_1(\Pi)^k = \frac{1}{k!}.
\end{align*}
Thus, by Lemma \ref{l:general_variance}, 
\begin{align}\label{e:oblique_mondrian_varbnd}
\EE\left[(\bar{f}_{\lambda}(X) - \hat{f}_{\lambda,n}(X))^2\right] &\leq \frac{5\|f\|_{\infty}^2 + 2\sigma^2}{n} \left(2s\sum_{k=1}^d \lambda^k\kappa_k \|P_{S^{\perp}}A\|_{2,1}^{ 1 \vee (k-s)} + \sum_{k=0}^{s} \frac{\lambda^k\kappa_k}{k!}\right).
\end{align}
Combining these bounds with \eqref{e:bias-var}, and again observing that by Jensen's inequality the risk of a STIT forest estimator for any number of trees $M$ is bounded above by the risk of a single STIT tree, gives the final result.
\end{proof}

\begin{proof}[Proof of Corollary \ref{cor:C1_Transformed_Mondrain_Rate}]

Under the assumptions of the Corollary, for the sequence of oblique Mondrian forest estimators $\hat{f}_n$ defined there, Theorem \ref{t:C1_Transformed_Mondrain_Bound} implies
\begin{align*}
\EE\left[(f(X) - \hat{f}_{n}(X))^2\right] 
&\leq \frac{9L^2m^{2\beta}}{c^2d^2\lambda_n^{2\beta}} + 
\frac{5\|f\|_{\infty}^2 + 2\sigma^2}{n} \left(2s\sum_{k=1}^d \lambda_n^k \kappa_k \ee_n^{1 \vee (k-s)} +  O(\lambda_n^{s})\right).
\end{align*}
Minimizing the upper bound with respect to $\lambda_n$ gives that for $\lambda_n \asymp L^{\frac{2}{d + 2\beta}}n^{\frac{1}{d + 2\beta}} \ee_n^{-\frac{(d-s)}{d + 2\beta}}$,
\begin{align*}
\EE\left[(f(X) - \hat{f}_{n}(X))^2\right] 
&\lesssim \max\left\{L^{\frac{2d}{d + 2\beta}}n^{-\frac{2\beta}{d + 2\beta}} \ee_n^{\frac{2\beta(d-s)}{d + 2\beta}}, L^{\frac{2s}{s + 2\beta}}n^{-\frac{2\beta}{s + 2\beta}}\right\}.
\end{align*}
The final claim follows from the observation that by letting $\ee_n \lesssim L^{-\frac{2}{s + 2\beta}}n^{-\frac{1}{s + 2\beta}}$ and $\lambda_n \asymp L^{\frac{2}{s + 2\beta}}n^{\frac{1}{s + 2\beta}}$, the upper bound above satisfies
\begin{align*}
\EE\left[(f(X) - \hat{f}_{n}(X))^2\right] 
&\lesssim L^{\frac{2s}{s + 2\beta}}n^{-\frac{2\beta}{s + 2\beta}}.
\end{align*}

\end{proof}

\section*{Acknowledgments}
The author would like to thank Yangxinyu Xie, Ngoc Mai Tran, and Werner Nagel for their valuable suggestions and corrections to improve this manuscript. The author is grateful for support from NSF Grant DMS-2402234.

\bibliographystyle{plain}
\bibliography{Biblio}

\begin{appendix}

\section{Proofs}

This appendix contains the remaining proofs of the results in the main text that were not contained in Section \ref{sec:proofs} of the main text. Throughout, we will denote by $B_{\delta} := \{x \in \mathbb{R}^d : \|x\| \leq 1 - \delta\}$ the ball of radius $1 - \delta$ for $\delta \in [0,1)$, and $c_{\mu}$ will denote a constant depending only on the distribution $\mu$ of $X$ that may change from line to line.

\subsection{General risk bound for $\mathcal{C}^{1,\beta}$ functions}\label{a:C2-general-bnd}

\begin{prop}\label{t:C2_Rate}
Assume $f$ satisfies \eqref{e:ridge_fxn_assump} with $\tilde{g} \in \mathcal{C}^{1, \beta}(L)$, and assume $\mu$ has a positive and Lipschitz density on its support $B^d$. Let $\hat{f}_{n} = \hat{f}_{n, M, \lambda, \Pi}$ be the random tessellation forest estimator with normalized associated zonoid $\Pi$, $M$ trees, and lifetime $\lambda > 0$. 
Then, for fixed $\delta \in [0, 1)$,
\begin{align*}
&\EE[(\hat{f}_{n}(X) - f(X))^2| X \in B_{\delta}] \\ &\leq c_{\mu}L^2\Bigg(\frac{1}{\lambda^2}\EE\left[\mathrm{D}(P_SZ_0)\left(1 - \frac{\mathrm{vol}_d(Z_{0} \cap \lambda(B^d - X))}{\mathrm{vol}_d((P_SZ_0 +  P_{S^{\perp}}Z_0) \cap \lambda (B^d - X))}\right)\bigg|X \in B_{\delta}\right]^2 \\
&\quad + \frac{\EE\left[\mathrm{D}(P_SZ_0)^2\right]^2}{\lambda^4}  +  \frac{\EE[\mathrm{D}(P_{S}Z_0)^{1+\beta}]^2}{\lambda^{2 + 2\beta}}  + \frac{\EE\left[\mathrm{D}(P_SZ_0)^21_{\{\mathrm{D}(P_SZ_0) \geq \frac{\lambda}{2}\}}\right]}{\lambda^2}\\
&\quad + \frac{s\mathbb{E}\left[\mathrm{D}(P_SZ_0)^3 1_{\{\mathrm{D}(P_SZ_0) \geq \frac{\lambda \delta}{2}\}}\right]}{\mu(B_{\delta})\lambda^3} \Bigg) + \frac{L^2\EE[\mathrm{D}(P_SZ_{0})^2]}{\lambda^2M} \\
&\quad +  \frac{5\|f\|_{\infty}^2 + 2\sigma^2}{n \mu(B_{\delta})} \left(2s\sum_{k=1}^d \lambda^k\kappa_k  V_1(P_{S^{\perp}}\Pi)^{1 \vee (k-s)} + \sum_{k=0}^{s}\lambda^k \kappa_k V_{k}(P_S\Pi)\right).  
\end{align*}
\end{prop}

The upper bound above is a result of a bias-variance decomposition of the risk of a random tessellation forest estimator, where the last term is similar to the upper bound on the variance as in Theorem \ref{t:Lipschitz_Rate}, and the remaining terms are an upper bound on the bias for the forest estimator that exploits the additional smoothness assumption. This bias upper bound depends more delicately on the geometry of the zero cell and its relation to the relevant feature subspace $S$ than in Theorem \ref{t:Lipschitz_Rate}. This result will be used to obtain the improved rate of convergence for oblique Mondrian forests under additional assumptions in Theorem \ref{t:C2_Transformed_Mondrain_Bound}.

We first need the following lemmas before proceeding to the proof of Proposition \ref{t:C2_Rate}.

\begin{lemma}\label{l:PsZ_odd_fux}
For $\lambda > 0$ and an $s$-dimensional linear subspace $S$ of $\RR^d$, define the probability density
$$F_{\lambda,S}(y) := \EE\left[\frac{1_{\{y \in P_SZ_x^{\lambda}\}}}{\mathrm{vol}_s(P_SZ_x^{\lambda})}\right], \qquad y \in S.$$
Then,
\[\int_{S} (y - P_Sx) F_{\lambda,S}(y) \dint y  = 0.\]
\end{lemma}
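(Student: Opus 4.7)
Proof plan for Lemma \ref{l:PsZ_odd_fux}:

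The plan is to interpret the integral as an expected centroid and then exploit a symmetry of the zero cell. First I would check that $F_{\lambda,S}$ is really a probability density. Applying Fubini (everything is non-negative, and $0 \in Z_x^\lambda$ implies $P_S Z_x^\lambda$ has positive $s$-volume almost surely), one obtains
\[
\int_S F_{\lambda,S}(y)\, \dint y \;=\; \EE\!\left[\int_S \frac{\mathbf{1}_{\{y \in P_S Z_x^\lambda\}}}{\mathrm{vol}_s(P_S Z_x^\lambda)}\, \dint y\right] \;=\; 1.
\]
By the same Fubini argument,
\[
\int_S y\, F_{\lambda,S}(y)\, \dint y \;=\; \EE\!\left[\frac{1}{\mathrm{vol}_s(P_S Z_x^\lambda)}\int_{P_S Z_x^\lambda} y\, \dint y\right] \;=\; \EE[\,c(P_S Z_x^\lambda)\,],
\]
where $c(K)$ denotes the centroid of a convex body $K \subset S$. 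So it suffices to show $\EE[c(P_S Z_x^\lambda)] = P_S x$.

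Next I would reduce to the zero cell via stationarity. Using \eqref{e:scaling_and_translate} (or just the stationarity of the STIT tessellation), $Z_x^\lambda \overset{(d)}{=} Z_0^\lambda + x$, so $P_S Z_x^\lambda \overset{(d)}{=} P_S Z_0^\lambda + P_S x$, and the centroid is equivariant under translation, giving $c(P_S Z_x^\lambda) \overset{(d)}{=} c(P_S Z_0^\lambda) + P_S x$. Integrability of the right-hand side is not an issue: since $0 \in P_S Z_0^\lambda$ we have $\|c(P_S Z_0^\lambda)\| \le \mathrm{D}(P_S Z_0^\lambda)$, and the zero cell of any STIT tessellation has finite diameter moments.

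The key step is then the symmetry $\EE[c(P_S Z_0^\lambda)] = 0$. Because the directional distribution $\phi$ of the STIT process is even, the law of the whole tessellation is invariant under the reflection $R: y \mapsto -y$: indeed, a hyperplane $H(U,T)$ sampled in the generative procedure has the same law as $H(-U,-T) = R(H(U,T))$, so the random system of hyperplanes coincides in distribution with its reflection about the origin. Under this reflection, the cell containing $0$ maps to the cell containing $0$, hence $Z_0^\lambda \overset{(d)}{=} -Z_0^\lambda$. Applying the linear map $P_S$ and the centroid functional preserves this, so $c(P_S Z_0^\lambda) \overset{(d)}{=} -c(P_S Z_0^\lambda)$, and integrability then forces $\EE[c(P_S Z_0^\lambda)] = 0$. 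Combining the steps gives $\int_S y F_{\lambda,S}(y)\dint y = P_S x$, which rearranges to the claim.

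The main obstacle I anticipate is the careful justification of the reflection symmetry of the STIT tessellation from evenness of $\phi$ — this is morally clear from the construction but deserves a sentence tying the generative recipe (the unit normal $U$ is drawn from a measure proportional to $\phi$, and $T$ is uniform on a symmetric interval) to the distributional identity $Z_0^\lambda \overset{(d)}{=} -Z_0^\lambda$. Everything else is a routine Fubini computation combined with stationarity.
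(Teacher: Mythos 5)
Your proof is correct and follows the same overall structure as the paper's: reduce by stationarity to showing that the centered integrand is odd, which comes down to the distributional symmetry $Z_0^{\lambda} \overset{(d)}{=} -Z_0^{\lambda}$. The difference is in how the two proofs justify that symmetry. The paper invokes the fact that a random convex body is determined by its containment functional and uses the explicit formula $\PP(K \subseteq Z_0^{\lambda}) = e^{-2dV_1(K, B_{\lambda})}$ together with the invariance of mixed volumes under reflection (and the central symmetry of the Blaschke body) to get $\PP(K \subseteq -Z_0^{\lambda}) = \PP(K \subseteq Z_0^{\lambda})$. You instead argue directly from the generative construction: since $\phi$ is even, the pair $(U,T)$ has the same law as $(-U,-T)$, so the hyperplane system is reflection-invariant, hence so is the zero cell. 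Both are valid; the paper's version is more compact and sidesteps the construction entirely, while yours is more mechanistic and makes the role of the evenness hypothesis on $\phi$ visible. Your centroid framing versus the paper's ``odd integrand'' framing is cosmetic — they are the same Fubini computation.

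One small caveat on your symmetry step: the generative recipe is stated on a compact window $W$, and the conditional interval for $T$ is $[-h(W,-U),\, h(W,U)]$, which is mapped to $[-h(W,U),\, h(W,-U)]$ under $(U,T) \mapsto (-U,-T)$. These coincide only if $W = -W$. So the sentence ``a hyperplane sampled in the generative procedure has the same law as its reflection'' is literally correct only on a window that is itself symmetric about the origin. This is easily patched — either run the construction on a large ball, or (cleaner) observe that the stationary STIT on $\RR^d$ is determined by its hyperplane intensity measure $\Lambda$, which inherits reflection invariance from the evenness of $\phi$ — but as written the step leans on an unstated symmetry of $W$. Worth tightening if you were to write this out in full.
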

\begin{proof}
By stationary of $\mathcal{P}(\lambda)$,
    \begin{align*}
        \int_{S} (y - P_Sx) F_{\lambda,S}(y) \dint y  &= \int_{S}  (y - P_Sx) \EE\left[\frac{1_{\{y \in P_SZ_x^{\lambda}\}}}{\mathrm{vol}_s(P_SZ_x^{\lambda})}\right] \dint y \\
        &= \int_{S}  (y - P_Sx) \EE\left[\frac{1_{\{y - P_Sx \in P_S(Z_x^{\lambda} - x)\}}}{\mathrm{vol}_s(P_S(Z_x^{\lambda} - x))}\right] \dint y \\
        &= \int_{S}  \omega \EE\left[\frac{1_{\{\omega \in P_SZ_0^{\lambda}\}}}{\mathrm{vol}_s(P_SZ_0^{\lambda})}\right] \dint y.
    \end{align*}
The conclusion will follow from the fact that $Z_0^{\lambda}$ has the distribution as $-Z_0^{\lambda}$. Indeed, the distribution of a random convex polytope is uniquely defined by the containment function $C_K := \PP(K \subset \cdot)$ (Theorem 1.8.9 in \cite{MolchanovBook}).
Since mixed volumes are invariant under reflections, we have that for all compact $K \subset \RR^d$ containing the origin,
\begin{align*}
\PP(K \subset -Z_0^{\lambda}) = \PP(- K \subset Z_0^{\lambda}) = e^{-2dV_1(-K, \mathcal{B}_{\lambda})} =  e^{-2dV_1(K, \mathcal{B}_{\lambda})} = \PP(K \subset Z_0^{\lambda}),
\end{align*}
where $\mathcal{B}_{\lambda}$ is the the Blaschke body of $\mathcal{P}(\lambda)$ (see \cite[p. 162]{weil}).
We thus have that 
$$\EE\left[\frac{1_{\{\omega \in P_SZ_0^{\lambda}\}}}{\mathrm{vol}_s(P_SZ_0^{\lambda})}\right]  = \EE\left[\frac{1_{\{-\omega \in P_SZ_0^{\lambda}\}}}{\mathrm{vol}_s(P_SZ_0^{\lambda})}\right],$$
which implies the integrand above is odd and the integral is zero. 
\end{proof}

\begin{lemma}\label{l:diam_orthgonal_columns}
    Suppose that $Z_0$ is the zero cell of a STIT tessellation with unit lifetime and directional distribution $\phi_A$ as in \eqref{e:model_dirdist} for nonsingular $A \in \RR^{d \times d}$ with orthogonal columns and $\|A\|_{2, 1} = 1$. Denote by $\sigma_s(B)$ the $s$-th largest singular value of a matrix $B$. Let $\hat{S}$ be the $s$-dimensional linear subspace of $\RR^d$ spanned by the eigenvectors of $A$ with the largest $s$ eigenvalues and let $S$ be an arbitrary $s$-dimensional linear subspace with $\sigma_s(P_SA) > 0$. Then, for all $t \geq 0$ and $k > 0$,
\begin{align*}
\EE\left[\mathrm{D}(P_{\hat{S}}Z_0)^k1_{\{\mathrm{D}(P_{\hat{S}}Z_{0}) \geq t\}}\right] &\leq \frac{\Gamma(2s +k)}{\Gamma(2s)}\sum_{n=0}^{2s + k - 1}\frac{t^n\sigma_{s}(P_{S}A)^{n-k}}{n!} e^{-t \sigma_{r}(P_{S}A)}.
\end{align*}
In particular, for all $p > 0$,
\begin{align*}
    \EE[\mathrm{D}(P_{\hat{S}}Z_0)^p] \leq \frac{\Gamma(2s + p)}{2^p\sigma_{s}(P_{S}A)^p\Gamma(2s)}.
\end{align*}
\end{lemma}

\begin{proof}
First observe that if $A$ has orthogonal columns, then 
\begin{align*}
    A^{-1} = \mathrm{diag}(\|a_1\|^{-2}, \ldots \|a_d\|^{-2}) \cdot A^T,
\end{align*}
and thus by the equality \eqref{e:wZ0} in the proof of Lemma \ref{l:Deter},
\begin{align*}
    \omega_{Z_0}(u) = \sum_{i=1}^d (T_i^{(1)} + T_i^{(2)}) |\langle u, \frac{a_i}{\|a_i\|_2^2} \rangle|.
\end{align*}
Now let $\hat{I}_r \subseteq \{1, \ldots, d\}$ be the index subset of size $r$ corresponding to the columns of $A$ with the $r$ largest norms (which are also the eigenvalues), and let $\hat{S}$ be the subspace spanned by those $r$ columns. The diameter of $P_{\hat{S}}A$ is then
\begin{align*}
    \sup_{u \in \mathbb{S}^{d-1}} \sum_{i=1}^d (T_i^{(1)} + T_i^{(2)}) |\langle P_{\hat{S}}u, \frac{a_i}{\|a_i\|_2^2} \rangle| &= \left\|\sum_{i \in \hat{I}_r} (T_i^{(1)} + T_i^{(2)}) \frac{a_i}{\|a_i\|_2^2} \right\|_2 \leq \sum_{i \in \hat{I}_r} \frac{(T_i^{(1)} + T_i^{(2)})}{\|a_i\|_2} \\
    &= \frac{1}{\sigma_s(P_{\hat{S}}A)}\sum_{i \in \hat{I}_r} (T_i^{(1)} + T_i^{(2)}).
\end{align*}
Note that $T^{(s)} := \sum_{i \in \hat{I}_s} (T_i^{(1)} + T_i^{(2)}) \sim \mathrm{Erlang}(2s, 1)$, and $\sigma_s(P_{\hat{S}}A) \geq \sigma_s(P_SA) > 0$ for any subspace $S$ as defined in the Lemma. Thus, for $r > 0$ and $k \in \mathbb{N}$,
\begin{align*}
\EE\left[\mathrm{D}(P_{\hat{S}}Z_0)^k1_{\{\mathrm{D}(P_{\hat{S}}Z_{0}) \geq r\}}\right] & \leq \EE\left[\left(\frac{T^{(s)}} {\sigma_s(P_{S}A)}\right)^k1_{\{T^{(s)} \geq r \sigma_s(P_{S}A)\}}\right] \\
&= 
 \frac{\Gamma(2s +k)}{\sigma_s(P_{S}A)^k\Gamma(2s)}\sum_{n=0}^{2s + k - 1}\frac{\left( r\sigma_{s}(P_{S}A) \right)^{n}}{n!} e^{-r\sigma_{s}(P_{S}A)},
\end{align*}
and moments of the diameter of $P_{\hat{S}}Z_0$ for any $p > 0$ satisfy the upper bound
\begin{align*}
\EE[\mathrm{D}(P_{\hat{S}}Z_0)^p] &\leq 
\frac{\Gamma(2s + p)}{\sigma_{s}(P_{\hat{S}}A)^p\Gamma(2s)}.
\end{align*}
\end{proof}

We now proceed to the proof of Proposition \ref{t:C2_Rate}.

\begin{proof}[Proof of Proposition \ref{t:C2_Rate}]
Recall the definition \eqref{e:forest} of a random tessellation forest estimator $\hat{f}_{n, \lambda, M}$ built from $M$ random tessellation trees of lifetime $\lambda > 0$. Define for each $m$ and $x \in \RR^d$, 
\begin{align*}
\bar{f}^{(m)}_{\lambda}(x) := \EE[f(X)|X \in Z^{\lambda,(m)}_x ],
\end{align*}
where $Z^{\lambda,(m)}_x$ is the cell of the $m$-th random tessellation $\mathcal{P}_m(\lambda)$ containing $x \in \RR^d$ and define the average $\bar{f}_{\lambda,M}(x) := \frac{1}{M}\sum_{m=1}^M \bar{f}^{(m)}_{\lambda}(x)$. Also define \begin{align}\label{e:tilde-f}
\tilde{f}_{\lambda}(x) := \EE_{\mathcal{P}}[\bar{f}_{\lambda,M}(x)] = \EE_{\mathcal{P}}[\bar{f}^{(1)}_{\lambda,M}(x)].
\end{align}
As noted in \cite{mourtada2020minimax}, the bias-variance decomposition for the risk of a tree estimator can be extended to the random forest estimator as follows \cite[Equation (1)]{arlot2014analysis}:
\begin{align}\label{e:bias-var_forests}
     \EE[(\hat{f}_{\lambda,n, M}(X) - f(X))^2] =  \EE[(f(X)- \bar{f}_{\lambda,M}(X))^2] + \EE[(\bar{f}_{\lambda,M}(X) - \hat{f}_{\lambda,n,M}(X) )^2].
\end{align}
\textit{Variance term}: For the variance term in \eqref{e:bias-var_forests}, Jensen's inequality implies
\begin{align*}
  \EE[(\bar{f}_{\lambda,M}(x) - \hat{f}_{\lambda,n,M}(x))^2] \leq \EE[(\bar{f}_{\lambda}^{(1)}(x) - \hat{f}_{\lambda,n,1}(x))^2].
\end{align*}
We then use Lemma \ref{l:general_variance} to obtain the upper bound
\begin{align*}
&\EE[(\bar{f}_{\lambda}^{(1)}(X) - \hat{f}_{\lambda,n,1}(X) )^2] \\
&\leq \frac{5\|f\|_{\infty}^2 + 2\sigma^2}{n} \left(2s\sum_{k=1}^d \lambda^k\kappa_k  V_1(P_{S^{\perp}}\Pi)^{1 \vee (k-s)} + \sum_{k=0}^{s}\lambda^k \kappa_k V_{k}(P_S\Pi)\right),
\end{align*}
and the conditional variance satisfies
\begin{align}\label{e:cond_var2}
  &\EE[(\bar{f}_{\lambda}^{(1)}(X) - \hat{f}_{\lambda,n,1}(X) )^2| X \in B_{\delta}] \nonumber \\
  &\leq \PP(X \in K_{\delta})^{-1} \EE[(\bar{f}_{\lambda}^{(1)}(X) - \hat{f}_{\lambda,n,1}(X) )^2] \nonumber \\
  & \leq \frac{5\|f\|^2_{\infty} + 2\sigma^2}{n \mu(B_{\delta})} \left(2s\sum_{k=1}^d \lambda^k\kappa_k  V_1(P_{S^{\perp}}\Pi)^{1 \vee (k-s)} + \sum_{k=0}^{s}\lambda^k \kappa_k V_{k}(P_S\Pi)\right).
\end{align}
\textit{Bias term}:
For the bias term in \eqref{e:bias-var_forests}, Proposition 1 of \cite{arlot2014analysis} implies that for fixed $x \in \RR^d$,
\begin{align}\label{e:C2_bias_decomp}
  \EE_{\mathcal{P}}[(f(x)- \bar{f}_{\lambda,M}(x) )^2] = \EE_{\mathcal{P}}[(f(x) - \tilde{f}_{\lambda}(x))^2] + \frac{\mathrm{Var}_{\mathcal{P}}(\bar{f}^{(1)}_{\lambda}(x))}{M}.  
\end{align}
We then have the following upper bound on the variance of $\bar{f}^{(1)}_{\lambda}$: for $x \in \RR^d$,
\begin{align*}
 \mathrm{Var}_{\mathcal{P}}(\bar{f}^{(1)}_{\lambda}(x)) \leq \EE_{\mathcal{P}}\left[(\bar{f}^{(1)}_{\lambda}(x) - f(x))^2\right] \leq \frac{L^2}{\lambda^2}\EE[\mathrm{D}(P_SZ_{0})^2],
\end{align*}
where the last inequality follows from Lemma \ref{l:fixed_x_bias_bnd} and stationarity. 
It thus remains to control the remaining term $\EE_{\mathcal{P}}[(f(x) - \tilde{f}_{\lambda}(x))^2]$. By Taylor's theorem, for $f \in \mathcal{C}^{1,\beta}(L)$ with $\beta \in (0,1]$,
\begin{align*}
    |f(z) - f(x) - \nabla f(x)^T(z-x)| &= |g(P_{S}z) - g(P_{S}x) - \nabla g(P_{S}x)^TP_{S}(z-x)| \\
    &= \left| \int_0^1 [\nabla g(P_{S}x + tP_{S}(z-x)) - \nabla g(P_{S}x)]^TP_{S}(z-x)\dint t\right| \\
    &\leq \int_0^1 L(t\|P_{S}(z-x)\|)^{\beta}\|P_{S}(z-x)\| \dint t \leq L\|P_{S}(z-x)\|^{1 + \beta}.
\end{align*}
Then, for $x \in \RR^d$,
\begin{align}\label{e:pointwise-ftildebias-bnd}
    |\tilde{f}_{\lambda}(x) - f(x)| &= \left| \EE\left[\frac{1}{\mu(Z_x^{\lambda})}\int_{Z_x^{\lambda}} (f(z) - f(x)) \mu(\dint z) \right]\right| \nonumber \\
    &\leq \left|\EE\left[ \frac{1}{\mu(Z_x^{\lambda})}\int_{Z_x^{\lambda}} \nabla f(x)^T(z - x) \mu(\dint z)\right]\right| \\
    &\quad + \EE\left[ \frac{1}{\mu(Z_x^{\lambda})}\int_{Z_x^{\lambda}} \left|f(z) - f(x) - \nabla f(x)^T(z - x) \right| \mu(\dint z)\right] \nonumber \\
    &\leq \left|\nabla f(x)^T \int_{\RR^d} (z - x) \EE\left[\frac{1_{\{z \in Z_x^{\lambda}\}}}{\mu(Z_x^{\lambda})}\right] \mu(\dint z)\right| \nonumber \\
    &\quad +  \EE\left[\frac{L}{\mu(Z_x^{\lambda})}\int_{\RR^d} \|P_{S}(z-x)\|^{1 + \beta}1_{\{z \in Z_x^{\lambda}\}} \mu(\dint z)\right] \nonumber \\
    &\leq \left|\nabla g(P_{S}x)^T \int_{\RR^d} P_{S}(z - x) \EE\left[\frac{1_{\{z \in Z_x^{\lambda}\}}}{\mu(Z_x^{\lambda})}\right] \mu(\dint z)\right| \nonumber \\
    &\quad +  \EE\left[\frac{L\mathrm{D}(P_SZ_x^{\lambda})^{1 + \beta}}{\mu(Z_x^{\lambda})}\int_{\RR^d}1_{\{z \in Z_x^{\lambda}\}} \mu(\dint z)\right] \nonumber \\
    &\leq \|\nabla g(P_{S}x)\| \left\|\int_{\RR^d} P_{S}(z - x) \EE\left[\frac{1_{\{z \in Z_x^{\lambda}\}}}{\mu(Z_x^{\lambda})}\right] \mu(\dint z)\right\| +  L \EE\left[\mathrm{D}(P_SZ_x^{\lambda})^{1 + \beta}\right] \nonumber \\
    &\leq L\left\| \int_{\RR^d} P_{S}(z - x) \EE\left[\frac{1_{\{z \in Z_x^{\lambda}\}}}{\mu(Z_x^{\lambda})}\right]\mu(\dint z)\right\| +  \frac{L}{\lambda^{1 + \beta}}\EE[\mathrm{D}(P_{S}Z_0)^{1+\beta}].
\end{align}
By the assumptions, the density $p$ of $\mu$ has a finite Lipschitz constant $C_p > 0$  on its support $B^d := \mathrm{supp}(\mu)$ and we can define $p_0 := \min_{x \in B^d} p(x) > 0$ and $p_1 := \max_{x \in B^d} p(x) < \infty$. Also note that the integrand above is zero when $z,y \notin B^d$. 
Then, for the first term above,
\begin{align*}
\left\| \int_{\RR^d} P_{S}(z - x) \EE\left[\frac{1_{\{z \in Z_x^{\lambda}\}}}{\mu(Z_x^{\lambda})}\right]\mu(\dint z)\right\| &= \left\| \int_{\RR^d} P_{S}(z - x) \EE\left[\frac{p(z) 1_{\{z \in Z_x^{\lambda} \cap B^d\}}}{\mu(Z_x^{\lambda})}\right] \dint z\right\|.
\end{align*}
Now, define $\tilde{Z}_x^{\lambda} := P_SZ_x^{\lambda} + P_{S^{\perp}}Z_x^{\lambda}$. We first compare the density $F_{\lambda, p}(z) := \EE\left[\frac{p(z) 1_{\{z \in Z_x^{\lambda} \cap K\}}}{\mu(Z_x^{\lambda})}\right]$ with
the density
\begin{align*}
\tilde{F}_{\lambda, p, S} (z) := \EE\left[\frac{p(z)1_{\{z \in \tilde{Z}_x^{\lambda} \}}}{\mu(\tilde{Z}_x^{\lambda})}\right], \quad z \in \RR^d.  
\end{align*}

By the triangle inequality,
\begin{align}\label{e:split_term}
 &\left\|\int_{\RR^d} P_{S}(z - x) F_{\lambda, p}(z)\mu(\dint z) \right\| \nonumber \\ &\leq \underbrace{\left\|\int_{\RR^d} P_{S}(z - x) \left(F_{\lambda, p}(z) - \tilde{F}_{\lambda, p, S}(z)\right) \dint z \right\|}_{I}  +   \underbrace{\left\|\int_{\RR^d} P_{S}(z - x)\tilde{F}_{\lambda, p, S}(z) \dint z \right\|}_{II}.  
\end{align}
\textit{Bound on term $I$.} To handle the first term above, we see that
\begin{align*}
    I 
    &\leq \EE\left[\int_{\RR^d} \left\|P_{S}(z - x)\right\| \left|\frac{1_{\{z \in Z_x^{\lambda}\}}}{\mu(Z_x^{\lambda})} - \frac{1_{\{z \in \tilde{Z}_x^{\lambda}  \}}}{\mu(\tilde{Z}_x^{\lambda} )}\right|p(z) \dint z \right] \\
    &\leq \EE\bigg[\frac{\mathrm{D}(P_SZ_x^{\lambda})}{\mu(Z_x^{\lambda})\mu(\tilde{Z}_x^{\lambda})} \int_{\RR^d} \left|\mu(\tilde{Z}_x^{\lambda} )1_{\{z \in Z_x^{\lambda}\}}- \mu(Z_x^{\lambda})1_{\{z \in \tilde{Z}_x^{\lambda} \}} \right|p(z) \dint z \bigg] \\
    &\leq \EE\bigg[\frac{\mathrm{D}(P_SZ_x^{\lambda})}{\mu(Z_x^{\lambda})\mu(\tilde{Z}_x^{\lambda} )} \int_{\RR^d} \int_{\RR^d}p(z)p(y) \left|1_{\{y \in \tilde{Z}_x^{\lambda} \}}1_{\{z \in Z_x^{\lambda}\}}- 1_{\{y \in Z_x^{\lambda}\}} 1_{\{z \in \tilde{Z}_x^{\lambda} \}} \right| \dint y\dint z \bigg].
\end{align*}
Then, we see that by symmetry
\begin{align*}
  &\int_{\RR^d} \int_{\RR^d}p(z)p(y) \left|1_{\{y \in \tilde{Z}_x^{\lambda} \}}1_{\{z \in Z_x^{\lambda}\}}- 1_{\{y \in Z_x^{\lambda}\}} 1_{\{z \in \tilde{Z}_x^{\lambda} \}} \right| \dint y\dint z \\
  &\leq 2\int_{\RR^d}\int_{\RR^d} p(y)p(z) 1_{\{z \in Z_x^{\lambda}\}}1_{\{y \in \tilde{Z}_x^{\lambda} \}}1_{\{y \notin Z_x^{\lambda}\}} \dint z \dint y \\
  &\leq 
  2p_1\mu(Z^{\lambda}_x)\left( \mathrm{vol}_d(\tilde{Z}_x^{\lambda}  \cap B^d) - \mathrm{vol}_d(Z_x^{\lambda}  \cap B^d)\right).
\end{align*}
Also note that
$\mu(\tilde{Z}_x^{\lambda} ) = \int_{B^d} p(z) 1_{\{z \in \tilde{Z}_x^{\lambda} \}} \dint y \geq p_0 \mathrm{vol}_d(\tilde{Z}_x^{\lambda}  \cap B^d)$.
Combining the above bounds gives 
\begin{align*}
    I 
    &\leq \frac{p_1}{p_0}\EE\left[\mathrm{D}(P_SZ_x^{\lambda})\left(1 - \frac{\mathrm{vol}_d(Z_{x}^{\lambda} \cap B^d)}{\mathrm{vol}_d(\tilde{Z}_x^{\lambda}  \cap B^d)}\right) \right].
\end{align*}
By stationarity and the scaling property \eqref{e:scaling}, we thus have 
\begin{align}\label{e:termIbnd}
    I \leq 
    \frac{p_1}{\lambda p_0} \EE\left[\mathrm{D}(P_SZ_0)\left(1 - \frac{\mathrm{vol}_d(Z_0\cap \lambda(B^d - x))}{\mathrm{vol}_d((P_SZ_0 + P_{S^{\perp}}Z_0) \cap \lambda(B^d-x))}\right) \right]. 
\end{align}

\textit{Bound on term $II$.} For the second term in \eqref{e:split_term}
we compare the marginal of $\tilde{F}_{\lambda, p, S}$  with the density
$$\tilde{F}_{\lambda, S}(y) := \EE\left[\frac{1_{\{y \in P_SZ_x^{\lambda}\}}}{\mathrm{vol}_s(P_SZ_x^{\lambda})}\right], \qquad y \in S.$$ 
By Lemma \ref{l:PsZ_odd_fux},
\begin{align*}
 \int_{S}(y - P_{S}x) F_{\lambda, S}(y) \dint y  = 0,
\end{align*}
and thus, letting $z = y + \omega$ for $y \in S$ and $\omega \in S^{\perp}$, we have
\begin{align*}
II &= \left\| \int_{S}\int_{S^{\perp}} (y - P_{S}x) \EE\left[\frac{p(y + \omega) 1_{\{(y, \omega) \in \tilde{Z}_x^{\lambda}\}}}{\mu(\tilde{Z}_x^{\lambda})}\right] \dint y\dint \omega\right\| \\
&= \left\| \int_{S}(y - P_{S}x) \EE\left[ \frac{1}{\mu( \tilde{Z}_x^{\lambda})}\int_{S^{\perp}}p(y +\omega)1_{\{y + \omega \in \tilde{Z}_x^{\lambda}\}} \dint \omega \right] \dint y\right\| \\
&= \left\| \int_{S}(y - P_{S}x) \left(\EE\left[\frac{1}{\mu(\tilde{Z}_x^{\lambda})}\int_{S^{\perp}}p(y + \omega)1_{\{y + \omega \in \tilde{Z}_x^{\lambda}\}} \dint \omega\right]   - F_{\lambda,S}(y) \right)\dint y\right\|  \\
&\leq  \EE\left[ \int_{S}  \|y - P_{S}x \|  \left|\frac{1}{\mu(\tilde{Z}_x^{\lambda})} \int_{S^{\perp}}p(y + \omega)1_{\{y + \omega \in \tilde{Z}_x^{\lambda}\}} \dint \omega  - \frac{1_{\{y \in P_SZ_{x}^{\lambda}\}}}{\mathrm{vol}_s(P_SZ_x^{\lambda}) } \right| \dint y \right] \\
&\leq  \EE\left[ \mathrm{D}(P_SZ_x^{\lambda})\int_{S} \left|\frac{1}{\mu(\tilde{Z}_x^{\lambda})} \int_{S^{\perp}}p(y + \omega)1_{\{y + \omega \in \tilde{Z}_x^{\lambda} \}} \dint \omega  - \frac{1_{\{y \in P_SZ_{x}^{\lambda}\}}}{\mathrm{vol}_s(P_SZ_x^{\lambda}) } \right| \dint y \right].
\end{align*}
Next we see that the integral inside the expectation satisfies
\begin{align*}
    &\int_S \left|\frac{1}{\mu(\tilde{Z}_x^{\lambda})} \int_{S^{\perp}}p(y + \omega)1_{\{y + \omega \in \tilde{Z}_x^{\lambda}\}} \dint \omega  - \frac{1_{\{y \in P_SZ_{x}^{\lambda}\}}}{\mathrm{vol}_s(P_SZ_x^{\lambda}) } \right| \dint y \\
    &\leq  \frac{\int_{S} \int_{S} \int_{S^{\perp}} \left| p(y, \omega) - p(z,\omega)\right|1_{\{\omega \in P_{S^{\perp}}Z_x^{\lambda}\}}1_{\{y \in P_SZ_x^{\lambda}\}} 1_{\{z\in P_SZ^{\lambda}_{x}\}}   \dint \omega \dint z \dint y}{p_0\mathrm{vol}_d((P_SZ_x^{\lambda} + P_{S^{\perp}}Z_x^{\lambda}) \cap B^d)\mathrm{vol}_s(P_SZ_x^{\lambda})},
\end{align*}
and
\begin{align*}
&\left| p(y + \omega) - p(z + \omega)\right| \\
&= \left| p(y + \omega) - p(z + \omega)\right|1_{\{y + \omega \in B^d, z + \omega \in B^d\}} + |p(y + \omega)|1_{\{y + \omega \in B^d, z + \omega \notin B^d\}} \\
&\quad + |p(z + \omega)|1_{\{y + \omega \notin B^d, z + \omega \in B^d\}} \\
&\leq C_p\|y-z\|_2 1_{\{y + \omega \in B^d, z + \omega \in B^d\}} + p_1 1_{\{y + \omega \in B^d, z + \omega \notin B^d\}}  + p_1 1_{\{y + \omega \notin B^d, z + \omega \in B^d\}}.
\end{align*}
Then the integral in the numerator above satisfies
\begin{align*}
    &\int_{S} \int_{S} \int_{S^{\perp}} \left| p(y, \omega) - p(z,\omega)\right|1_{\{\omega \in P_{S^{\perp}}Z_x^{\lambda}\}}1_{\{y \in P_SZ_x^{\lambda}\}} 1_{\{z\in P_SZ^{\lambda}_{x}\}} \dint \omega \dint z \dint y\\
    &\leq \int_{S} \int_{S} \int_{S^{\perp}} C_p\|y-z\|_2 1_{\{(y, \omega) \in (P_{S}Z_x^{\lambda} + P_{S^{\perp}}Z_x^{\lambda}) \cap B^d\}} 1_{\{(z, \omega) \in (P_{S}Z_x^{\lambda})\}} \dint \omega \dint z \dint y \\
    &+ 2p_1 \int_{S} \int_{S} \int_{S^{\perp}} 1_{\{(y, \omega) \in (P_{S}Z_x^{\lambda} + P_{S^{\perp}}Z_x^{\lambda}) \cap B^d\}} 1_{\{z\in P_SZ^{\lambda}_{x}\}} 1_{\{(z, \omega) \notin (P_{S}Z_x^{\lambda}  + P_{S^{\perp}}Z_x^{\lambda}) \cap B^d\}}  \dint \omega \dint z \dint y \\
    &\leq C_p\mathrm{D}(P_SZ_x^{\lambda})\mathrm{vol}_d((P_SZ_x^{\lambda} + P_{S^{\perp}}Z_x^{\lambda}) \cap B^d)\mathrm{vol}_s(P_SZ_x^{\lambda}) \\
    & + 2p_1 \int_{S} \int_{S} \int_{S^{\perp}} 1_{\{(y, \omega) \in (P_{S}Z_x^{\lambda} + P_{S^{\perp}}Z_x^{\lambda}) \cap B^d\}} 1_{\{z\in P_SZ^{\lambda}_{x}\}} 1_{\{(z, \omega) \notin (P_{S}Z_x^{\lambda}  + P_{S^{\perp}}Z_x^{\lambda}) \cap B^d\}}  \dint \omega \dint z \dint y.
\end{align*}
and thus, 
\begin{align*}
II  &\leq \frac{C_p}{p_0} \EE\left[\mathrm{D}(P_SZ_x^{\lambda})^2 \right] \\
    &+  \frac{2p_1}{p_0}\EE\left[\mathrm{D}(P_SZ_x^{\lambda}) \int_S\int_S \int_{S^{\perp}} \frac{1_{\{(y,\omega) \in (P_{S}Z_x^{\lambda} + P_{S^{\perp}}Z_x^{\lambda}) \cap B^d\}} 1_{\{(z, \omega) \notin K\}}1_{\{z \in P_SZ^{\lambda}_{x} \}} }{\mathrm{vol}_d((P_SZ_x^{\lambda} + P_{S^{\perp}}Z_x^{\lambda}) \cap B^d)\mathrm{vol}_s(P_SZ_x^{\lambda})} \dint \omega \dint z \dint y\right]. 
\end{align*}
To proceed with simplifying the second term above, we first define $x_S := P_Sx$, $B_S := P_SB^d$, and $B_{S^{\perp}} := P_{S^{\perp}}B^d$. Then, note that $(1 - \|x_S\|)B^d + x_S \subseteq B^d$, and thus the event that $z + \omega \notin B^d$ implies that $z + \omega \notin (1 - \|x_S\|) B^d  + x_S$. For any linear subspace $S$, $\frac{1}{2}B_S + \frac{1}{2}B_{S^{\perp}} \subseteq B^d$, and thus $(z, \omega) \notin B^d$ also implies $z \notin \frac{1}{2}(1 - \|x_S\|)B_S + x_S$. Thus,
\begin{align*}
&\int_S\int_S \int_{S^{\perp}} \frac{1_{\{(y,\omega) \in (P_{S}Z_x^{\lambda} + P_{S^{\perp}}Z_x^{\lambda}) \cap K\}} 1_{\{(z, \omega) \notin K\}}1_{\{z \in P_SZ^{\lambda}_{x} \}} }{\mathrm{vol}_d((P_SZ_x^{\lambda} + P_{S^{\perp}}Z_x^{\lambda}) \cap K)\mathrm{vol}_s(P_SZ_x^{\lambda})} \dint \omega \dint z \dint y \\
&\leq \int_S\int_S \int_{S^{\perp}} \frac{1_{\{(y,\omega) \in (P_{S}Z_x^{\lambda} + P_{S^{\perp}}Z_x^{\lambda}) \cap K\}} 1_{\{z \in P_SZ^{\lambda}_{x} \cap (\frac{1}{2}(1 - \|x_S\|)B_S + x_S)^c \}} }{\mathrm{vol}_d((P_SZ_x^{\lambda} + P_{S^{\perp}}Z_x^{\lambda}) \cap K)\mathrm{vol}_s(P_SZ_x^{\lambda})} \dint \omega \dint z \dint y  \\
&= \frac{\mathrm{vol}_s(P_SZ^{\lambda}_{x} \cap (\frac{1}{2}(1 - \|x_S\|)B_S + x_S)^c)}{\mathrm{vol}_s(P_SZ_x^{\lambda})} \\
&= \frac{\mathrm{vol}_s(P_SZ_0 \cap \left(\frac{\lambda}{2}(1 - \|x_S\|)B_S\right)^c)}{\mathrm{vol}_s(P_SZ_0)},
\end{align*}
where the last equality follows from the scaling property \eqref{e:scaling} and stationarity, which imply $P_SZ_x^{\lambda} = \frac{1}{\lambda}P_SZ_0 + x_S$. Finally, observe that the ratio above is zero when $\mathrm{D}(P_SZ_0) \leq \frac{\lambda}{2}(1 - \|x_S\|)$, and thus
\begin{align}\label{e:final_II_bnd}
II &\leq 
\frac{C_p}{\lambda^2p_0}\EE\left[\mathrm{D}(P_SZ_0)^2\right] + \frac{2p_1}{\lambda p_0}\EE\left[\mathrm{D}(P_SZ_0)1_{\{\mathrm{D}(P_SZ_0) \geq \frac{\lambda}{2}(1 - \|x_S\|)\}}\right].
\end{align}

\textit{Final Bound.}
Combining the upper bounds on I and II gives
\begin{align}\label{e:f_ftilde_bnd}
&|f(x) - \tilde{f}_{\lambda}(x)| \nonumber \\
&\leq L\bigg(\frac{p_1}{\lambda p_0} \EE\left[\mathrm{D}(P_SZ_0)\left(1 - \frac{\mathrm{vol}_d(Z_0\cap \lambda(B^d - x))}{\mathrm{vol}_d((P_SZ_0 + P_{S^{\perp}}Z_0) \cap \lambda(B^d-x))}\right) \right]  + \frac{C_p\EE\left[\mathrm{D}(P_SZ_0)^2\right]}{\lambda^2p_0} \nonumber \\
&\qquad \quad  + \frac{2p_1}{\lambda p_0}\EE\left[\mathrm{D}(P_SZ_0)1_{\{\mathrm{D}(P_SZ_0) \geq \frac{\lambda}{2}(1 - \|x_S\|)\}}\right] +  \frac{\EE[\mathrm{D}(P_{S}Z_0)^{1+\beta}]}{\lambda^{1 + \beta}}\bigg).
\end{align}
Taking the expectation with respect to $X$ conditioned on $X \in K_{\delta}$ and  applying the inequality $(x + y)^2 \leq 2x^2 + 2y^2$ and Jensen's inequality gives
\begin{align*}
 &\EE[(f(X) - \tilde{f}_{\lambda}(X))^2| X \in B_{\delta}] \\
 &\leq L^2\EE_X\bigg[\bigg(\frac{p_1}{\lambda p_0} \EE\left[\mathrm{D}(P_SZ_0)\left(1 - \frac{\mathrm{vol}_d(Z_0\cap \lambda(B^d - X))}{\mathrm{vol}_d((P_SZ_0 + P_{S^{\perp}}Z_0) \cap \lambda(B^d-X))}\right) \right] \\
 & \quad + \frac{C_p\EE\left[\mathrm{D}(P_SZ_0)^2\right]}{\lambda^2p_0} +  \frac{\EE[\mathrm{D}(P_{S}Z_0)^{1+\beta}]}{\lambda^{1 + \beta}} \\
&\quad + \frac{2p_1}{\lambda p_0}\EE\left[\mathrm{D}(P_SZ_0)1_{\{\mathrm{D}(P_SZ_0) \geq \frac{\lambda}{2}(1 - \|P_SX\|)\}}\right]
\bigg)^2 \bigg| X \in B_{\delta} \bigg] \\
&\leq 4L^2\bigg(\frac{p_1^2}{\lambda^2p_0^2}\EE\left[\mathrm{D}(P_SZ_0)\left(1 - \frac{\mathrm{vol}_d(Z_{0} \cap \lambda(B^d - X))}{\mathrm{vol}_d((P_SZ_0 +  P_{S^{\perp}}Z_0) \cap \lambda (B^d - X))}\right)\bigg|X \in B_{\delta}\right]^2 \\
&\quad + \frac{C_p^2\EE\left[\mathrm{D}(P_SZ_0)^2\right]^2}{\lambda^4p_0^2}   +  \frac{\EE[\mathrm{D}(P_{S}Z_0)^{1+\beta}]^2}{\lambda^{2 + 2\beta}}  \\
&\quad + \frac{4p_1^2\EE\left[\mathrm{D}(P_SZ_0)^2\mathbb{P}_X(\mathrm{D}(P_SZ_0) \geq \frac{\lambda}{2}(1 - \|P_SX\|) \, | \, X \in B_{\delta})\right]}{\lambda^2p_0^2}\bigg). 
\end{align*}

We will now take a closer look at the last term in the parentheses above. We first note that conditioning on $X \in B_{\delta}$, i.e. $\|X\| \leq 1 - \delta$, the event $\mathrm{D}(P_SZ_0) \geq \frac{\lambda}{2}(1 - \|P_SX\|)$ implies $\mathrm{D}(P_SZ_0) \geq \frac{\lambda \delta}{2}$. Also, conditioned on $\mathrm{D}(P_SZ_0) \geq \frac{\lambda}{2}$, the event $\mathrm{D}(P_SZ_0) \geq \frac{\lambda}{2}(1 - \|P_SX\|)$ has probability $1$ with respect to $\mu$. Also, conditioned on the event $\mathrm{D}(P_SZ_0) < \frac{\lambda}{2}$,
\begin{align}\label{e:bnd-diam-tail}
&\PP_X\left(\mathrm{D}(P_SZ_0) \geq \frac{\lambda}{2}(1 - \|P_SX\|) \, \bigg| \, X \in B_\delta \right) \nonumber\\
&= \frac{1}{\mu(B_{\delta})}\int_{B^d} p(x) 1_{\{\|P_Sx\| \geq 1 - \frac{2}{\lambda}\mathrm{D}(P_SZ_0), \|x\| \leq 1 - \delta \}} \dint x \nonumber \\
&\leq \frac{p_1}{\mu(B_{\delta})} 1_{\{\mathrm{D}(P_SZ_0) \geq \frac{\lambda \delta}{2}\}} \int_{B_S + B_{S^{\perp}}} 1_{\{\|P_Sx\| \geq 1 - \frac{2}{\lambda}\mathrm{D}(P_SZ_0) \}} \dint x \nonumber\\
&= \frac{p_1\kappa_{d-s} \kappa_s}{\mu(B_{\delta})} \left( 1 - \left(1 - \frac{2}{\lambda}\mathrm{D}(P_SZ_0) \right)^s\right)1_{\{\mathrm{D}(P_SZ_0) \geq \frac{\lambda \delta}{2}\}}  \nonumber \\
&\leq \frac{2s p_1}{\mu(B_{\delta})} \cdot \frac{1}{\lambda}\mathrm{D}(P_SZ_0) 1_{\{\mathrm{D}(P_SZ_0) \geq \frac{\lambda \delta}{2}\}},
\end{align}
where we have used the inequality $(1-x)^s \leq 1 - sx$ and the bound $\kappa_k \leq 1$ for all $k \in \mathbb{N}$.

The complete upper bound on the risk conditioned on $X \in B_{\delta}$ is then
\begin{align*}
&\EE[(\hat{f}_{\lambda,n, M}(X) - f(X))^2| X \in B_{\delta}] \\ 
&\leq c_{\mu}L^2\Bigg(\frac{1}{\lambda^2}\EE\left[\mathrm{D}(P_SZ_0)\left(1 - \frac{\mathrm{vol}_d(Z_{0} \cap \lambda(B^d - X))}{\mathrm{vol}_d((P_SZ_0 +  P_{S^{\perp}}Z_0) \cap \lambda (B^d - X))}\right)\bigg|X \in B_{\delta}\right]^2 \\
&\quad + \frac{\EE\left[\mathrm{D}(P_SZ_0)^2\right]^2}{\lambda^4}  +  \frac{\EE[\mathrm{D}(P_{S}Z_0)^{1+\beta}]^2}{\lambda^{2 + 2\beta}}  + \frac{\EE\left[\mathrm{D}(P_SZ_0)^21_{\{\mathrm{D}(P_SZ_0) \geq \frac{\lambda}{2}\}}\right]}{\lambda^2} \\
& \quad + \frac{s}{\mu(B_{\delta})}\cdot\frac{\mathbb{E}\left[\mathrm{D}(P_SZ_0)^3 1_{\{\mathrm{D}(P_SZ_0) \geq \frac{\lambda \delta}{2}\}}\right]}{\lambda^3} \Bigg)  + \frac{L^2\EE[\mathrm{D}(P_SZ_{0})^2]}{\lambda^2M} \\
& \quad +  \frac{5\|f\|_{\infty}^2 + 2\sigma^2}{n \mu(B_{\delta})} \left(2s\sum_{k=1}^d \lambda^k\kappa_k  V_1(P_{S^{\perp}}\Pi)^{1 \vee (k-s)} + \sum_{k=0}^{s}\lambda^k \kappa_k V_{k}(P_S\Pi)\right). 
\end{align*}

\end{proof}

\subsection{Proof of Theorem \ref{t:C2_Transformed_Mondrain_Bound} and Corollary \ref{cor:C2_Transformed_Mondrain_Rate}}

\begin{proof}[Proof of Theorem \ref{t:C2_Transformed_Mondrain_Bound}]

We proceed as in the proof of Proposition \ref{t:C2_Rate} to obtain the following upper bound on the risk using \eqref{e:bias-var_forests} and \eqref{e:f_ftilde_bnd}
\begin{align*}
    &\EE[(\hat{f}_{\lambda,n, M}(x) - f(x))^2] \\
    &=  \EE[(f(x) - \tilde{f}_{\lambda}(x))^2] + \frac{L^2}{\lambda^2M}\mathbb{E}[\mathrm{D}(P_SZ_0)^2] + \EE[(\bar{f}_{\lambda,M}(x) - \hat{f}_{\lambda,n,M}(x) )^2],
\end{align*}
where $\tilde{f}_{\lambda}$ is as defined in \eqref{e:tilde-f} in the proof of Proposition \ref{t:C2_Rate}. We will now take a more careful treatment of the first bias term $\EE_{\mathcal{P}}[(f(x) - \tilde{f}_{\lambda}(x))^2]$ under the assumption in Theorem \ref{t:C2_Transformed_Mondrain_Bound}.

Recall that $f(x) = g(P_Sx)$ for an $s$-dimensional subspace $S$ and we have assumed that the STIT tessellation used to build the estimator $\hat{f}_n$ has directional distribution \eqref{e:model_dirdist} defined by a matrix $A$ with orthogonal columns. Then, define $\hat{S}$ to be the $s$-dimensional subspace spanned by the columns of $A$ with the $s$ largest norms, i.e. the principal eigenspace corresponding to the top $s$ eigenvectors.

Recall from the proof of Theorem \ref{t:C2_Rate} the bound, for $x \in \RR^d$,
\begin{align*}
    |\tilde{f}_{\lambda}(x) - f(x)| 
    &\leq L\left\| \int_{\RR^d} P_{S}(z - x) \EE\left[\frac{1_{\{z \in Z_x^{\lambda}\}}}{\mu(Z_x^{\lambda})}\right]\mu(\dint z)\right\| +  \frac{L}{\lambda^{1 + \beta}}\EE[\mathrm{D}(P_{S}Z_0)^{1+\beta}].
\end{align*}
Here we divert the proof to divide the first term in the bound above as follows:
\begin{align*}
&\left\| \int_{\RR^d} P_{S}(z - x) \EE\left[\frac{1_{\{z \in Z_x^{\lambda}\}}}{\mu(Z_x^{\lambda})}\right]\mu(\dint z)\right\| \\
&\leq
\left\| \int_{\RR^d} (P_{S}(z - x) - P_{\hat{S}}(z-x)) \EE\left[\frac{1_{\{z \in Z_x^{\lambda}\}}}{\mu(Z_x^{\lambda})}\right]\mu(\dint z)\right\|  + \left\| \int_{\RR^d} P_{\hat{S}}(z - x) \EE\left[\frac{1_{\{z \in Z_x^{\lambda}\}}}{\mu(Z_x^{\lambda})}\right]\mu(\dint z)\right\| \\
&\leq \|P_S - P_{\hat{S}}\|\left\| \EE\int_{\RR^d} (P_{S} + P_{\hat{S}})(z - x)\frac{1_{\{z \in Z_x^{\lambda}\}}}{\mu(Z_x^{\lambda})}\mu(\dint z)\right\|+ \left\| \int_{\RR^d} P_{\hat{S}}(z - x) \EE\left[\frac{1_{\{z \in Z_x^{\lambda}\}}}{\mu(Z_x^{\lambda})}\right]\mu(\dint z)\right\| \\
&\leq \frac{\|P_S - P_{\hat{S}}\|(\EE[\mathrm{D}(P_{S}Z_0)] + \EE[\mathrm{D}(P_{\hat{S}}Z_0)])}{\lambda} + \left\| \int_{\RR^d} P_{\hat{S}}(z - x) \EE\left[\frac{1_{\{z \in Z_x^{\lambda}\}}}{\mu(Z_x^{\lambda})}\right]\mu(\dint z)\right\|.
\end{align*}
Now observe that by the definition of $\hat{S}$ and assumption that $A$ has orthogonal columns, the first term in the upper bound above is zero because $Z_0 = P_{\hat{S}}Z_0 + P_{\hat{S}^{\perp}}Z_0$. Then, following the arguments in the proof of Proposition \ref{t:C2_Rate}, term $I$ in \eqref{e:split_term} is zero, and by the bound \eqref{e:final_II_bnd} on term $II$ in \eqref{e:split_term}, we have
\begin{align*}
&\left\| \int_{\RR^d} P_{\hat{S}}(z - x) \EE\left[\frac{1_{\{z \in Z_x^{\lambda}\}}}{\mu(Z_x^{\lambda})}\right]\mu(\dint z)\right\| \\
& \leq L\left(\frac{C_p\EE\left[\mathrm{D}(P_{\hat{S}}Z_0)^2\right]}{\lambda^2p_0} + \frac{2p_1}{\lambda p_0}\EE\left[\mathrm{D}(P_{\hat{S}}Z_0)1_{\{\mathrm{D}(P_{\hat{S}}Z_0) \geq \frac{\lambda}{2}(1 - \|x_{\hat{S}}\|)\}}\right]\right).
\end{align*}
Thus,
\begin{align*}
    |f(x) - \tilde{f}_{\lambda}(x)| &\leq \frac{L\|P_S - P_{\hat{S}}\|(\EE[\mathrm{D}(P_{S}Z_0)] + \EE[\mathrm{D}(P_{\hat{S}}Z_0)])}{\lambda} + \frac{LC_p\EE\left[\mathrm{D}(P_{\hat{S}}Z_0)^2\right]}{\lambda^2p_0} \\
    &\qquad + \frac{2Lp_1}{\lambda p_0}\EE\left[\mathrm{D}(P_{\hat{S}}Z_0)1_{\{\mathrm{D}(P_{\hat{S}}Z_0) \geq \frac{\lambda}{2}(1 - \|x_{\hat{S}}\|)\}}\right] +  \frac{L\EE[\mathrm{D}(P_{S}Z_0)^{1+\beta}]}{\lambda^{1 + \beta}},
\end{align*}
and again following the same argument as in the proof of Proposition \ref{t:C2_Rate} gives that conditioned on $X \in B_{\delta}$ for $\delta \in [0,1)$,
\begin{align*}
&\EE[(f(X) - \tilde{f}_{\lambda}(X))^2| X \in B_{\delta}] \\
&\leq c_{\mu}L^2 \bigg(\frac{\|P_S - P_{\hat{S}}\|^2(\EE[\mathrm{D}(P_{S}Z_0)] + \EE[\mathrm{D}(P_{\hat{S}}Z_0)])^2}{\lambda} + \frac{\EE\left[\mathrm{D}(P_{\hat{S}}Z_0)^2\right]^2}{\lambda^4} \\
    &\qquad + \frac{\EE\left[\mathrm{D}(P_{\hat{S}}Z_0)^21_{\{\mathrm{D}(P_{\hat{S}}Z_0) \geq \frac{\lambda}{2}\}}\right]}{\lambda^2} + \frac{s\mathbb{E}\left[\mathrm{D}(P_{\hat{S}}Z_0)^3 1_{\{\mathrm{D}(P_{\hat{S}}Z_0) \geq \frac{\lambda \delta}{2}\}}\right]}{\mu(B_{\delta})\lambda^3}  +  \frac{\EE[\mathrm{D}(P_{S}Z_0)^{1+\beta}]^2}{\lambda^{2 + 2\beta}}\bigg),
\end{align*}

Next, by Lemma \ref{l:Deter} and Gautschi's inequality, for $\delta \geq 0$ and $\beta \in [0, 1]$,
\begin{align*}
\EE\left[\mathrm{D}(P_{S}Z_0)^{1 + \beta}\right] &\leq \frac{\Gamma(2d + 1 + \beta)}{ \sigma_s(P_SA)^{1 + \beta}\Gamma(2d)} \leq \frac{\Gamma(2d + 1)(2d+1)^{\beta}}{ \sigma_s(P_SA)^{1 + \beta}\Gamma(2d)} \\
&= \frac{2d(2d+1)^{\beta}}{ \sigma_s(P_SA)^{1 + \beta}} \leq \frac{6d^{1 + \beta}}{\sigma_s(P_SA)^{1 + \beta}} ,
\end{align*}
and for $k \in \mathbb{N}$ and $\delta \in [0,1]$, by Lemma \ref{l:diam_orthgonal_columns},
\begin{align*}
\EE\left[\mathrm{D}(P_{\hat{S}}Z_0)^k1_{\{\mathrm{D}(P_{\hat{S}}Z_{0}) \geq \frac{\lambda \delta}{2}\}}\right] &\leq \frac{\Gamma(2s + k)}{\sigma_s(P_{S}A)^k \Gamma(2s)}\sum_{n=0}^{2s + k - 1}\frac{\lambda^n \delta^n \sigma_{s}(P_{S}A)^{n}}{2^{n} n!} e^{-\frac{\lambda \delta \sigma_{s}(P_{S}A)}{2}}.
\end{align*}
By the Davis-Kahan Theorem \cite{yu2015useful},
\begin{align*}
    \|P_{\hat{S}} - P_{S}\|_{F} &= 2\|\sin \Theta(\hat{S},S)\|_F \leq \frac{\|A - P_S A\|_F}{\sigma_s(P_SA)} = \frac{\|P_{S^{\perp}}A\|_F}{\sigma_s(P_SA)}.
\end{align*}

Combining the bounds above gives
\begin{align*}
    \EE[(f(X) - \tilde{f}_{\lambda}(X))^2 |X \in B_{\delta}] &\leq c_{\mu}L^2\bigg[\frac{d^2\|P_{S^{\perp}}A\|^2_F}{\lambda^2\sigma_s(P_SA)^4} + \frac{s^4}{\lambda^4 \sigma_s(P_SA)^{4}} +  \frac{d^{2\beta + 2}}{\lambda^{2 + 2\beta} \sigma_s(P_SA)^{2 + 2\beta}}  \\
    & + \frac{s^2}{\lambda^2\sigma_s(P_SA)^2}\sum_{n=0}^{2s + 1}\frac{\lambda^n \sigma_{s}(P_SA)^{n}}{2^{n} n!} e^{-\frac{\lambda \sigma_{s}(P_SA)}{2}} \\
    &+ \frac{s^4}{\lambda^3\sigma_s(P_SA)^3\mu(B_{\delta})}\sum_{n=0}^{2s + 2}\frac{\lambda^n \delta^n \sigma_{s}(P_SA)^{n}}{2^{n} n!} e^{-\frac{\lambda \delta \sigma_{s}(P_SA)}{2}}\bigg],
\end{align*}
and for $\delta = 0$,
\begin{align*}
    \EE[(f(X) - \tilde{f}_{\lambda}(X))^2] &\leq c_{\mu}L^2\bigg[\frac{d^2\|P_{S^{\perp}}A\|^2_F}{\lambda^2\sigma_s(P_SA)^4} + \frac{s^4}{\lambda^4 \sigma_s(P_SA)^{4}} +  \frac{d^{2\beta + 2}}{\lambda^{2 + 2\beta} \sigma_s(P_SA)^{2 + 2\beta}} \\
    & + \frac{s^3}{\lambda^3\sigma_s(P_SA)^3} + \frac{s^2}{\lambda^2\sigma_s(P_SA)^2}\sum_{n=0}^{2s + 1}\frac{\lambda^n \sigma_{s}(P_SA)^{n}}{2^{n} n!} e^{-\frac{\lambda \sigma_{s}(P_SA)}{2}}\bigg].
\end{align*}
Finally, we use the same bound on the variance term \eqref{e:cond_var2} in the proof of Theorem \ref{t:C2_Rate} and recall from equation \eqref{e:P_Sperp_zonoid_diam} in the proof of Theorem \ref{t:C1_Transformed_Mondrain_Bound} that in this setting we have
\begin{align*}
    V_1(P_{S^{\perp}}\Pi) = \|P_{S^{\perp}}A\|_{2,1}.
\end{align*}
We also use the fact that $V_k(K) \leq \frac{V_1(K)^k}{k!}$ for any convex body $K$ and $\|P_{S^{\perp}}A\|_{2,1} \leq 1$ to obtain
\begin{align*}
  &\EE[(\bar{f}_{\lambda,M}(X) - \hat{f}_{\lambda,n,M}(X) )^2| X \in B_{\delta}] \\
  &\leq \frac{5\|f\|^2_{\infty} + 2\sigma^2}{n \mu(B_{\delta})} \left(2s\sum_{k=1}^d \lambda^k\kappa_k  \|P_{S^{\perp}}A\|_{2,1}^{1 \vee (k-s)} + \sum_{k=0}^{s}\frac{\lambda^k \kappa_k}{k!}\right).
\end{align*}

The final bound on the conditional and unconditional risks follows: 
\begin{align*}
    &\EE[(\hat{f}_{\lambda,n, M}(X) - f(X))^2 | X \in B_{\delta}] \\
    &\leq c_{\mu}L^2\bigg[\frac{d^2\|P_{S^{\perp}}A\|^2_F}{\lambda^2\sigma_s(P_SA)^4} + \frac{s^4}{\lambda^4 \sigma_s(P_SA)^{4}} +  \frac{d^{2\beta + 2}}{\lambda^{2 + 2\beta} \sigma_s(P_SA)^{2 + 2\beta}}  \\
    &\quad + \frac{s^2}{\lambda^2\sigma_s(P_SA)^2}\sum_{n=0}^{2s + 1}\frac{\lambda^n \sigma_{s}(P_SA)^{n}}{2^{n} n!} e^{-\frac{\lambda \sigma_{s}(P_SA)}{2}} \\
    &\quad + \frac{s^4}{\lambda^3\sigma_s(P_SA)^3\mu(B_{\delta})}\sum_{n=0}^{2s + 2}\frac{\lambda^n \delta^n \sigma_{s}(P_SA)^{n}}{2^{n} n!} e^{-\frac{\lambda \delta \sigma_{s}(P_SA)}{2}}\bigg] \\
    &\quad  + \frac{6L^2d^2}{\lambda^2M \sigma_s(P_SA)^2} + \frac{5\|f\|^2_{\infty} + 2\sigma^2}{n \mu(B_{\delta})} \left(2s\sum_{k=1}^d \lambda^k\kappa_k  \|P_{S^{\perp}}A\|_{2,1}^{1 \vee (k-s)} + \sum_{k=0}^{s}\frac{\lambda^k \kappa_k}{k!}\right).
\end{align*}

\end{proof}

\begin{proof}[Proof of Corollary \ref{cor:C2_Transformed_Mondrain_Rate}]

For the statement in Corollary \ref{cor:C2_Transformed_Mondrain_Rate}, Theorem \ref{t:C2_Transformed_Mondrain_Bound} and the assumptions imply that as $n \to \infty$,
\begin{align*}
\EE[(\hat{f}_{\lambda_n,n, M_n}(X) - f(X))^2| X \in B_{\delta}] 
&\lesssim \frac{L^2}{\lambda_n^{2 + 2\beta}}  + \frac{L^2\ee_n^2}{\lambda_n^2} + \frac{L^2}{\lambda_n^2M_n} +  \frac{\sum_{k=s}^d\lambda_n^k \ee_n^{k-s}}{n}. 
\end{align*}
Then additionally assuming $M_n \gtrsim \lambda_n^{2\beta}$,
we have
\begin{align*}
\EE[(\hat{f}_{\lambda_n,n, M_n}(X) - f(X))^2| X \in B_{\delta}] &\lesssim \frac{L^2}{\lambda_n^{2 + 2\beta}} + \frac{\lambda_n^s}{n} + \frac{\sum_{k=s+1}^d\lambda_n^k \ee_n^{k-s}}{n} + \frac{L^2\ee_n^2}{\lambda_n^2}.
\end{align*}
Depending on the rate at which $\varepsilon_n$ goes to zero, the asymptotic behavior of this risk upper bound could correspond to three different possible regimes where different terms become dominant: $\varepsilon_n \lesssim \lambda_n^{-1}$, $\lambda_n^{-1} \ll \varepsilon_n \lesssim \lambda_n^{-\beta}$, and $\lambda_n^{-\beta} \ll \varepsilon_n$. We look at each case and find the optimal $\lambda_n$ that balances the two dominant terms to obtain the overall asymptotic behavior: \\
\emph{Case 1}:
If $\ee_n \lesssim (L^{2}n)^{-\frac{1}{s + 2\beta + 2}}$ we have that for $\lambda_n \asymp (L^{2}n)^{\frac{1}{s + 2\beta + 2}}$,
\begin{align*}
\EE[(f(X) - \hat{f}_{\lambda_n,n, M_n}(X))^2 | X \in B_{\delta}] 
&\lesssim L^{\frac{2s}{s + 2\beta + 2}}n^{-\frac{2\beta + 2}{s + 2\beta + 2}}.
\end{align*}
\emph{Case 2}: If $(L^2n)^{-\frac{1}{s + 2 + 2\beta}} \lesssim \varepsilon_n \lesssim (L^2n)^{-\frac{\beta}{d + 2 + 2\beta + \beta(s-d)}}$, then letting $\lambda_n \asymp (L^{2}n \varepsilon_n^{s-d})^{\frac{1}{d + 2\beta + 2}}$,
\begin{align*}
\EE[(f(X) - \hat{f}_{\lambda_n,n,M_n}(X))^2 | X \in B_{\delta}] 
&\lesssim 
L^2(L^{2}n \varepsilon_n^{s-d})^{\frac{-2 + 2\beta}{d + 2\beta + 2}}
= L^{\frac{2d}{d + 2 \beta + 2}}n^{-\frac{2 + 2\beta}{d + 2\beta + 2}}\ee_n^{\frac{(d-s)(2 + 2\beta)}{d+2\beta + 2}} . 
\end{align*}
\emph{Case 3}: If $\varepsilon_n \gtrsim (L^2n)^{-\frac{\beta}{d + 2 + 2\beta + \beta(s-d)}}$, then letting $\lambda_n \asymp (L^2n\varepsilon^{s - d + 2}_n)^{\frac{1}{d+2}}$ gives
\begin{align*}
 \EE[(\hat{f}_{\lambda_n,n, M_n}(X) - f(X))^2| X \in B_{\delta}] &\lesssim  
 L^{\frac{2d}{d+2}}n^{-\frac{2}{d+2}}\varepsilon_n^{\frac{4d - 2s}{d+2}}.   
\end{align*}

For $\delta = 0$, the upper bound satisfies
\begin{align*}
&\EE[(\hat{f}_{\lambda,n, M}(X) - f(X))^2]\lesssim  \frac{L^2\ee_n^2}{\lambda_n^2} + \frac{L^2}{\lambda_n^{2 + 2\beta}}  + \frac{L^2}{\lambda_n^3} + \frac{L^2}{\lambda_n^2M} +  \frac{\sum_{k=s}^d \lambda_n^k \ee_n^{k-s} }{n}. 
\end{align*}
If $3 \geq 2 + 2\beta$, then the same rates as above hold. If $3 < 2 + 2\beta$, then
\begin{align*}
&\EE[(\hat{f}_{\lambda,n, M}(X) - f(X))^2]\lesssim \frac{L^2\ee_n^2}{\lambda_n^2} + \frac{L^2}{\lambda_n^3} + \frac{L^2}{\lambda_n^2M} +  \frac{\sum_{k=s}^d \lambda_n^k \ee_n^{k-s}}{n}. 
\end{align*}
Additionally assuming $M_n \gtrsim \lambda_n$ gives
\begin{align*}
&\EE[(\hat{f}_{\lambda,n, M}(X) - f(X))^2]\lesssim \frac{L^2\ee_n^2}{\lambda_n^2} + \frac{L^2}{\lambda_n^3} +  \frac{\lambda^s}{n} + \frac{\sum_{k={s+1}}^d \lambda_n^k \ee_n^{k-s}}{n} . 
\end{align*}
Minimizing the upper bound with respect to $\lambda_n$ similarly gives the following asymptotic behavior divided into three regimes depending on $\varepsilon_n$:\\
\emph{Case 1}:
If $\ee_n \lesssim (L^{2}n)^{-\frac{1}{s + 3}}$ we have that for $\lambda_n \asymp (L^{2}n)^{\frac{1}{s + 3}}$,
\begin{align*}
\EE[(f(X) - \hat{f}_{\lambda_n,n, M_n}(X))^2] 
&\lesssim L^{\frac{2s}{s + 3}}n^{-\frac{3}{s + 3}}.
\end{align*}
\emph{Case 2}: If $(L^2n)^{-\frac{1}{s + 3}} \lesssim \varepsilon_n \lesssim (L^2n)^{-\frac{1}{d + s + 6}}$, then letting $\lambda_n \asymp (L^{2}n \varepsilon_n^{s-d})^{\frac{1}{d + 3}}$,
\begin{align*}
\EE[(f(X) - \hat{f}_{\lambda_n,n,M_n}(X))^2] 
&\lesssim 
L^2(L^{2}n \varepsilon_n^{s-d})^{\frac{-3}{d + 3}}
= L^{\frac{2d}{d + 3}}n^{-\frac{3}{d + 3}}\ee_n^{\frac{3(d-s)}{d+3}}.
\end{align*}
\emph{Case 3}: If $\varepsilon_n \gtrsim (L^2n)^{-\frac{1}{d + s+ 6 }}$, then letting $\lambda_n \asymp (L^2n\varepsilon^{s - d + 2}_n)^{\frac{1}{d+2}}$ gives
\begin{align*}
 \EE[(\hat{f}_{\lambda_n,n, M_n}(X) - f(X))^2| X \in B_{\delta}] &\lesssim  
 L^{\frac{2d}{d+2}}n^{-\frac{2}{d+2}}\varepsilon_n^{\frac{4d - 2s}{d+2}}.   
\end{align*}

\end{proof}

\subsection{Proofs of Theorem \ref{t:rate1_Mondrian} and \ref{t:C2_Rate_Mondrian}}\label{a:weighted_Mondrian}

We begin with a lemma on the diameter of the projected zero cell of the tessellation generated by an oblique Mondrian process as a special case of Lemma \ref{l:Deter}.
\begin{lemma}\label{l:diam_mondrian}
    Suppose that $Z_0$ is the zero cell of a weighted Mondrian tessellation with unit lifetime and directional distribution as defined in \eqref{e:phi_weighted_mondrian}. Then, for $r \geq 0$ and $k > 0$
\begin{align*}
\EE\left[\mathrm{D}(P_SZ_0)^k1_{\{\mathrm{D}(P_{S}Z_{0}) \geq r\}}\right] &\leq \frac{\Gamma(2s +k)}{\Gamma(2s)}\sum_{n=0}^{2s + k - 1}\frac{r^n \omega_S^n}{n!}e^{-r \omega_S},
\end{align*}
where $\omega_S := \min_{i \in S} \omega_i$. In particular,
\begin{align*}
    \EE[\mathrm{D}(P_SZ_0)^k] \leq \frac{\Gamma(2s + k)}{\omega_S^k\Gamma(2s)}.
\end{align*}
\end{lemma}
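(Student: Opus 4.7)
The plan is to couple the zero cell with that of a standard Mondrian in $\RR^d$ via Corollary \ref{cor:obliqueMondrian_zerocell}, then exploit that the splits are axis-aligned so that $P_S Z_0$ depends only on the $s$ coordinates in $\mathcal{S}$. This argument parallels Lemma \ref{l:Deter}, but the coordinate alignment is precisely what allows $2d$ to be replaced by $2s$ in the Gamma ratio and removes the loss incurred by projecting an arbitrary zonoid.

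First, I would identify the weighted Mondrian process with directional distribution \eqref{e:phi_weighted_mondrian} as the oblique Mondrian process associated with the nonsingular diagonal matrix $A = \mathrm{diag}(\omega_1, \ldots, \omega_d)$, which satisfies $\|A\|_{2,1} = \sum_i \omega_i = 1$. Corollary \ref{cor:obliqueMondrian_zerocell} with $m = d$ then yields $Z_0 \overset{(d)}{=} A^{-1} Z_0^{(M)}$, where $Z_0^{(M)} \overset{(d)}{=} \sum_{i=1}^d [-T_i^{(1)} e_i, T_i^{(2)} e_i]$ with $\{T_i^{(j)}\}$ i.i.d.\ $\mathrm{Exp}(1)$, as used in the proof of Lemma \ref{l:Deter}. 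Because $P_S$ is the orthogonal projection onto the coordinate subspace $\mathrm{span}\{e_i : i \in \mathcal{S}\}$, the segments indexed outside $\mathcal{S}$ project to zero and
\begin{equation*}
P_S Z_0 \overset{(d)}{=} \sum_{i \in \mathcal{S}} \omega_i^{-1} [-T_i^{(1)} e_i, T_i^{(2)} e_i],
\end{equation*}
an axis-aligned rectangle in $S \cong \RR^s$ whose $i$-th side has length $(T_i^{(1)} + T_i^{(2)})/\omega_i$.

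The diameter is then bounded by the sum of side lengths, so
\begin{equation*}
\mathrm{D}(P_S Z_0) \leq \sum_{i \in \mathcal{S}} \frac{T_i^{(1)} + T_i^{(2)}}{\omega_i} \leq \frac{T^{(s)}}{\omega_S}, \qquad T^{(s)} := \sum_{i \in \mathcal{S}} \bigl( T_i^{(1)} + T_i^{(2)} \bigr),
\end{equation*}
where $T^{(s)} \sim \mathrm{Erlang}(2s, 1)$ as a sum of $2s$ i.i.d.\ $\mathrm{Exp}(1)$ variables. The pointwise bound yields $\1_{\{\mathrm{D}(P_S Z_0) \geq r\}} \leq \1_{\{T^{(s)} \geq r \omega_S\}}$, hence
\begin{equation*}
\EE\bigl[\mathrm{D}(P_S Z_0)^k \1_{\{\mathrm{D}(P_S Z_0) \geq r\}}\bigr] \leq \omega_S^{-k}\, \EE\bigl[(T^{(s)})^k \1_{\{T^{(s)} \geq r \omega_S\}}\bigr],
\end{equation*}
and for positive integer $k$ the upper incomplete gamma identity $\int_a^\infty t^{n-1} e^{-t}\,\dint t = (n-1)!\, e^{-a} \sum_{j=0}^{n-1} a^j/j!$ applied with $n = 2s + k$ and $a = r \omega_S$ evaluates the right-hand side in closed form, producing the stated bound; the case $r = 0$ gives the unconditional moment. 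I do not foresee any substantive obstacle: the essential simplification is that axis-aligned splits make the projected zero cell depend only on the $2s$ exponentials indexed by $\mathcal{S}$, so the singular-value factor $\sigma_s(P_S A)$ of the general oblique Lemma \ref{l:Deter} collapses to $\omega_S$.
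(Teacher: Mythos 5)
Your proof is correct and follows essentially the same path as the paper's: identify the zero cell as a Minkowski sum of axis-aligned segments with $\mathrm{Exp}(1)$ endpoints scaled by $\omega_i^{-1}$, project onto $S$, bound the diameter by the $\omega_S^{-1}$-scaled sum of the $2s$ relevant exponentials (an $\mathrm{Erlang}(2s,1)$ variable), and evaluate the truncated moment via the incomplete gamma function. The only cosmetic difference is that you establish the distribution of $Z_0$ by routing through Corollary~\ref{cor:obliqueMondrian_zerocell} with $A = \mathrm{diag}(\omega_1,\ldots,\omega_d)$, whereas the paper simply states the Minkowski-sum representation directly; both land at exactly the same expression, and your subsequent diameter bound $\mathrm{D}(P_SZ_0) \leq \sum_{i\in\mathcal{S}}\omega_i^{-1}(T_i^{(1)}+T_i^{(2)})$ matches the paper's $\ell^2$-to-$\ell^1$ step.
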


\begin{proof}
Recall that $Z_0$ has the same distribution as the Minkowksi sum of the line segments 
$$\omega_i^{-1}[-T_1^{(i)} e_i, T_2^{(i)} e_i], \text{ for }i = 1, \ldots, d,$$
where $T^{(i)}_j$ are i.i.d. exponential random variables with unit parameter. The diameter of $P_SZ_0$ then has the following upper bound:
\begin{align*}
    \mathrm{D}(P_SZ_0) &= \left(\sum_{i\in S} \omega_i^{-2}\left(T^{(1)}_{i} + T^{(2)}_{i}\right)^2\right)^{1/2} \leq \sum_{i\in S} \omega_i^{-1}\left(T^{(1)}_{i} + T^{(2)}_{i}\right) \leq \omega_S^{-1}\sum_{i\in S}\left(T^{(1)}_{i} + T^{(2)}_{i}\right),
\end{align*}
where $\omega_S := \min_{i \in S} \omega_i$. That is, the diameter of $P_SZ_0$ is controlled by the sum of exponential random variables, which is an Erlang distributed random variable $$T^{S}:= \sum_{i\in S} \left(T^{(1)}_{i} + T^{(2)}_{i}\right) \sim \mathrm{Erlang}\left(2s, 1\right).$$
Thus, for $r \geq 0$ and $k > 0$,
\begin{align*}
\EE\left[\mathrm{D}(P_SZ_0)^k1_{\{\mathrm{D}(P_{S}Z_{0}) \geq r\}}\right] &\leq \omega_S^{-k}\EE\left[(T^{S})^k \mathbf{1}_{\{T^S \geq r \omega_S \}}\right] = \frac{\Gamma(2s +k)}{\Gamma(2s)}\sum_{n=0}^{2s + k - 1}\frac{r^n \omega_S^n}{n!}e^{-r \omega_S},
\end{align*}
and moments of the diameter satisfy
\begin{align*}
\EE[\mathrm{D}(P_SZ_0)^k] &\leq \frac{\EE[(T^{S})^k]}{\omega_S^k} = \frac{\Gamma(2s + k)}{\omega_S^k\Gamma(2s)}.
\end{align*}

\end{proof}

\begin{proof}[Proof of Theorem \ref{t:rate1_Mondrian}]
Under the assumptions of the theorem, by the bias-variance decomposition \eqref{e:bias-var}, Lemma \ref{l:fixed_x_bias_bnd} and Lemma 20 in \cite{OReillyTran2021minimax}, we have the following upper bound on the risk of the weighted Mondrian tree estimator $\hat{f}_n$:
\begin{align*}
    \EE[(f(X) - \hat{f}_{n}(X))^2] &= \EE[(f_{\lambda}(X) - \bar{f}_{\lambda}(X))^2] + \EE[(\bar{f}(X) - \hat{f}_{\lambda,n}(X))^2] \\
    &\leq \frac{L^2}{\lambda^2}\mathbb{E}[\mathrm{D}(P_SZ_0)^2] +  \frac{5\|f\|_{\infty}^2 + 2 \sigma^2}{n}\EE[N_{\lambda}([0,1]^d)].
\end{align*}
By Lemma \ref{l:diam_mondrian}, we also have the upper bound
\begin{align*}
\EE[\mathrm{D}(P_SZ_0)^2] \leq \frac{6}{\omega_S^2}.
\end{align*}

We next bound the expectation in the variance upper bound. Let $Z_{\lambda}$ be the typical cell of a STIT with directional distribution \eqref{e:phi_weighted_mondrian} and lifetime $\lambda$ as defined in \eqref{e:campbell}. 
Then, the support function of the typical cell $Z := Z_1$ is given by 
\[h(Z, u) = \frac{1}{2}\sum_{i=1}^d T_i |\langle u, e_i\rangle|,\] where $T_1, \ldots T_d$ are independent and $T_i \sim \exp(\omega_i)$. By the formula for mixed volumes of a zonoid from \cite[p.~614]{weil},
\[V(W[k], Z[d-k]) \stackrel{d}{=} \frac{1}{\binom{d}{d-k}}\sum_{i_1, \ldots, i_{d-1}}^{\neq} \prod_{j=1}^{d-k} T_{i_j},\]
and $\EE[V(W[k], Z[d-k])] = \frac{1}{\binom{d}{d-k}}\sum_{i_1, \ldots, i_{d-k}}^{\neq} \prod_{j=1}^{d-k} \frac{1}{\omega_{i_j}}$. 
Thus, by Lemma 6 in \cite{OReillyTran2021minimax},
\begin{align*}
N_{\lambda}([0,1]^d) &= \mathrm{vol}_d(\Pi_n) \sum_{k=0}^d\lambda^k \sum_{i_1, \ldots, i_{d-k}}^{\neq} \prod_{j=1}^{d-k} \frac{1}{\omega_{i_j}} =\mathrm{vol}_d(\Pi_n) \sum_{k=0}^d\lambda^d \sum_{i_1, \ldots, i_{d-k}}^{\neq} \prod_{j=1}^{d-k} \frac{1}{\lambda\omega_{i_j}} \\
&= \mathrm{vol}_d(\Pi)\lambda^d\prod_{i=1}^d \left(\frac{1}{\lambda\omega_i} + 1\right).
\end{align*}
Using the fact that the associated zonoid for the weighted Mondrian is the hyperrectangle \begin{align}\label{e:model_zonoid_mondrian}
    \Pi = \oplus_{i=1}^d \frac{\omega_i}{2} [-1,1],
\end{align}
we see that $\mathrm{vol}_d(\Pi) = \prod_{i=1}^d \omega_i$,
and thus,
\begin{align*}
 N_{\lambda}([0,1]^d) &= \prod_{i=1}^d \lambda \omega_i \prod_{i=1}^d \left(\frac{1}{\lambda\omega_i} + 1\right) = \prod_{i =1}^d\left(1 + \lambda \omega_i\right).  
\end{align*}
Combining the above observations gives the final bound
\begin{align*}
    \EE[(f(X) - \hat{f}_{n}(X))^2] &\leq \frac{6L^2}{\lambda^2\omega_S^2} +  \frac{5\|f\|_{\infty}^2 + 2 \sigma^2}{n}\prod_{i =1}^d\left(1 + \lambda \omega_i\right).
\end{align*}
\end{proof}

\begin{proof}[Proof of Theorem \ref{t:C2_Rate_Mondrian}]

Note that under the definition of the directional distribution for a weighted Mondrian, the associated zonoid is the hyperrectangle \eqref{e:model_zonoid_mondrian}, and thus we are in the setting where $\Pi = \Pi_S + \Pi_{S^{\perp}}$ for $\Pi_S \subset S$ and $\Pi_{S^{\perp}}\subset S^{\perp}$. Then, from the proof of Theorem \ref{t:C2_Rate}, we have the following upper bound on the risk for a weighted Mondrian forest $\hat{f}_{\lambda, n, M}$ with $M$ trees, lifetime $\lambda$, and directional distribution 
\eqref{e:phi_weighted_mondrian}:

\begin{align*}
&\EE[(\hat{f}_{n}(X) - f(X))^2| X \in [\delta, 1 - \delta]^d] \leq \bigg(\frac{LC_p\EE\left[\mathrm{D}(P_SZ_0)^2\right]}{\lambda^2p_0} +  \frac{L\EE[\mathrm{D}(P_{S}Z_0)^{2}]}{\lambda^{2}}\bigg)^2  \\
&+ \frac{9L^2p^3_1}{\lambda^2 p^3_0(1 - 2\delta)^d}\sum_{j=0}^{s-1} \frac{\kappa_{s-j} V_j([0, 1]^d)}{\lambda^{s-j}}\EE\left[\mathrm{D}(P_SZ_{0})^{s - j + 2}1_{\{\mathrm{D}(P_SZ_{0}) \geq \lambda \delta\}}\right] \\
&+ \bigg(\frac{6L^2C_pp_1\EE\left[\mathrm{D}(P_SZ_0)^2\right]}{\lambda^4p^2_0} +  \frac{6L^2p_1\EE[\mathrm{D}(P_{S}Z_0)^{1+\beta}]}{\lambda^{3 + \beta}p_0} \bigg)\\
& \qquad \cdot \frac{p_1}{p_0(1 - 2\delta)^d}\sum_{j=0}^{s-1} \frac{\kappa_{s-j} V_j([0,1]^s)}{\lambda^{s-j}}\EE\left[\mathrm{D}(P_SZ_{0})^{s - j + 1}1_{\{\mathrm{D}(P_SZ_{0}) \geq \lambda \delta\}}\right] \\
 &+ \frac{6L^2s}{\lambda^2M \omega_S^2} +  \frac{5\|f\|_{\infty}^2 + 2\sigma^2}{n} \EE[N_{\lambda}([0,1]^d)]. 
\end{align*}
By Lemma \ref{l:diam_mondrian} and \eqref{e:vk_cube},
\begin{align*}
&\EE[(\hat{f}_{\lambda,n, M}(X) - f(X))^2| X \in [\delta, 1 - \delta]^d] \leq \bigg(\frac{LC_p\Gamma(2s + 2)}{\lambda^2p_0\omega_S^2\Gamma(2s)} +  \frac{L\Gamma(2s + 1 + \beta)}{\lambda^{1 + \beta}\omega_S^{1 + \beta}\Gamma(2s)}\bigg)^2  \\
&+ \frac{9L^2p^3_1}{\lambda^2 p^3_0(1 - 2\delta)^d}\sum_{j=0}^{s-1} \binom{s}{j}\frac{\kappa_{s-j}\Gamma(2s + s - j + 2)}{\lambda^{s-j}\Gamma(2s)}\sum_{\ell=0}^{2s + (s - j + 2) - 1}\frac{\lambda^{\ell} \ee^{\ell} \omega_S^{\ell}}{\ell!}e^{-\lambda \ee \omega_S} \\
&+ \bigg(\frac{6L^2C_pp_1\Gamma(2s + 2)}{\lambda^4p^2_0\omega_S^2\Gamma(2s)} +  \frac{6L^2p_1\Gamma(2s + 1 + \beta)}{\lambda^{3 + \beta}p_0\omega_S^{1 + \beta}\Gamma(2s)} \bigg)\\
& \qquad \cdot \frac{p_1}{p_0(1 - 2\delta)^d}\sum_{j=0}^{s-1}\binom{s}{j} \frac{\kappa_{s-j}\Gamma(2s + s - j + 1)}{\lambda^{s-j}\Gamma(2s)}\sum_{\ell=0}^{2s + (s - j + 1) - 1}\frac{\lambda^{\ell} \ee^{\ell} \omega_S^{\ell}}{\ell!}e^{-\lambda \ee \omega_S}  \\
 &+ \frac{6L^2s}{\lambda^2M \omega_S^2} +  \frac{5\|f\|_{\infty}^2 + 2\sigma^2}{n} \prod_{i = 1}^d\left(1 + \lambda \omega_i\right). 
\end{align*}
Thus, for $\delta > 0$,
\begin{align*}
\EE[(\hat{f}_{n}(X) - f(X))^2| X \in [\delta, 1 - \delta]^d] &\leq 4s^2(2s + 1)^2\bigg(\frac{C_p}{p_0} + 1 \bigg)^2\frac{L^2}{\lambda^{4}\omega_S^{4}}  \\
 & + \frac{6L^2s}{\lambda^2M \omega_S^2} +  \frac{5\|f\|_{\infty}^2 + 2\sigma^2}{n} \prod_{i = 1}^d\left(1 + \lambda \omega_i\right) + o(\lambda^{-4}).    
\end{align*}
and for $\delta = 0$,
\begin{align*}
\EE[(\hat{f}_{\lambda,n, M}(X) - f(X))^2] &\leq 4s^2(2s + 1)^2\bigg(\frac{C_p}{p_0} + 1 \bigg)^2\frac{L^2}{\lambda^{4}\omega_S^{4}}  + \frac{18L^2p^3_1s\Gamma(2s + 3)}{\lambda^3 p^3_0\Gamma(2s)} \\
&+ \frac{6L^2s}{\lambda^2M \omega_S^2} +  \frac{5\|f\|_{\infty}^2 + 2\sigma^2}{n} \prod_{i = 1}^d\left(1 + \lambda \omega_i\right) + o(\lambda^{-3}). 
\end{align*}
\end{proof}

\subsection{Proof of Theorem \ref{thm:Mondrian_suboptimal}}\label{a:suboptimal}

\begin{proof}
Recall the bias-variance decomposition \eqref{e:bias-var} of a weighted Mondrian tree estimator $\hat{f}_{n}$ with lifetime $\lambda$:
\begin{align*}
    \EE[(f(X) - \hat{f}_{n}(X))^2] &= \EE[(f_{\lambda}(X) - \bar{f}_{\lambda}(X))^2] + \EE[(\bar{f}(X) - \hat{f}_{\lambda,n}(X))^2].
\end{align*}
First we obtain a lower bound on the bias. Recall that the distribution of the cell $Z_x^{\lambda}$ of a weighted Mondrian tessellation with lifetime $\lambda$ and directional distribution \eqref{e:phi_weighted_mondrian} containing $x \in \RR^d$ is the hyperrectangle
\[ \prod_{i=1}^d \left[x_i - T_i^{(1)}, x_i + T_i^{(2)}\right],\]
where for each $i = 1, \ldots, d$, $T^{(1)}_{i}$ and $T_i^{(2)}$ are independent exponential random variables with parameter $\lambda \omega_i$. Then, under the assumptions in the theorem,
\begin{align*}
\bar{f}_{\lambda}(x) - f(x) &= \frac{1}{\mu(Z_x^{\lambda})}\int_{\RR^d} f(y) - f(x) \dint \mu(y) \\
&= \frac{1}{\mathrm{vol}_d(Z_x^{\lambda} \cap [0,1]^d)}\int_{Z_x^{\lambda} \cap [0,1]^d} \langle a, y - x \rangle \dint y  \\
&= \sum_{i=1}^d \frac{a_i }{|[x_i - T_i^{(1)}, x_i + T_i^{(2)}] \cap [0,1]|}\int_{[x_i - T_i^{(1)}, x_i + T_i^{(2)}] \cap [0,1]} (y_i - x_i) \dint y_i \\
&\overset{(d)}{=} \sum_{i=1}^d\frac{a_i}{|[- T_i^{(1)}, T_i^{(2)}] \cap [-x_i, 1 -x_i ]|}\int_{ [- T_i^{(1)}, T_i^{(2)}] \cap [-x_i,1 - x_i]} t \dint t  \\
&= \sum_{i=1}^d \frac{a_i}{2}\left(\min\{1 - x_i, T^{(i)}_2\} -\min\{x_i, T^{(i)}_1\}\right).
\end{align*}
Squaring the above expression, taking the expectation with respect to the random tessellation, and applying Jensen's inequality gives
\begin{align*}
&\EE_{\mathcal{P}}[(\bar{f}_{\lambda}(x) - f(x))^2] \geq \EE\left[\left(\sum_{i=1}^d  \frac{a_i}{2}\left(\min\{1 - x_i, T^{(i)}_2\} -\min\{x_i, T^{(i)}_1\}\right)\right)^2\right] \\
&= \sum_{i=1}^d \frac{a^2_i}{4} \EE\left[\left(\min\{1 - x_i, T^{(i)}_2\} -\min\{x_i, T^{(i)}_1\}\right)^2\right] \\
&+ \sum_{i,j=1: i \neq j}^d \frac{a_ia_j}{4} \EE\left[\min\{1 - x_i, T^{(i)}_2\} -\min\{x_i, T^{(i)}_1\}\right]\EE\left[\min\{1 - x_j, T^{(j)}_2\} -\min\{x_j, T^{(j)}_1\}\right] \\
&= \sum_{i=1}^d \frac{a^2_i}{4} \left(\EE\left[\min\{1 - x_i, T^{(i)}_2\}^2 - 2\min\{1 - x_i, T^{(i)}_2\}\min\{x_j, T^{(j)}_1\} + \min\{x_i, T^{(i)}_1\}^2\right]\right) \\
&+ \sum_{i,j=1: i \neq j}^d \frac{a_ia_j}{4} \EE\left[\min\{1 - x_i, T^{(i)}_2\} -\min\{x_i, T^{(i)}_1\}\right]\EE\left[\min\{1 - x_j, T^{(j)}_2\} -\min\{x_j, T^{(j)}_1\}\right].
\end{align*}
For the terms in the sum above, we have for any $t \in [0,1]$ and $T \sim \mathrm{Exponential}(\lambda \omega_i)$,
\begin{align*}
    \EE[\min\{t, T\}] &= \int_0^{\infty}  \PP(\min\{t, T\} \geq r)  \dint r = \int_0^{\infty}\PP(T \geq r)1_{\{t \geq r\}} \dint r \\
    &=\int_0^{t}e^{-\lambda \omega_i r}\dint r = \frac{1}{\lambda \omega_i}\left(1 - e^{-\lambda \omega_i t}\right).
\end{align*}
Also,
\begin{align*}
 \EE[\min\{t, T\}^2] &= 2\int_0^{\infty} r\PP(\min\{t, T\} \geq r)  \dint r = 2\int_0^{\infty} r\PP(T \geq r)1_{\{t \geq r\}} \dint r \\
    &= 2\int_0^{t} re^{-\lambda \omega_i r}\dint r = \frac{2}{\lambda^2\omega_i^2} - \frac{2}{\lambda^2\omega_i^2}e^{-\lambda \omega_i t} - \frac{2t}{\lambda \omega_i}e^{-t\lambda \omega_i}.   
\end{align*}
Plugging these moments into the above bound and taking the expectation with respect to $X$ gives
\begin{align*}
&\EE\left[(\bar{f}_{\lambda}(X) - f(X))^2\right] \\
&\geq \sum_{i=1}^d \frac{a^2_i}{4} \bigg(\frac{4}{\lambda^2\omega_i^2} - \frac{2}{\lambda^2\omega_i^2}\EE\left[e^{-\lambda \omega_i(1 - X_i)}\right] - \frac{2}{\lambda \omega_i}\EE\left[(1-X_i)e^{-(1-X_i)\lambda \omega_i}\right] \\
& - \frac{2}{\lambda^2 \omega^2_i}\EE\left[1 - e^{-\lambda \omega_i (1-X_i)}\right]\EE\left[1 - e^{-\lambda \omega_i X_i}\right] - \frac{2}{\lambda^2\omega_i^2}\EE\left[e^{-\lambda \omega_i X_i}\right] - \EE\left[\frac{2X_i}{\lambda \omega_i}e^{-X_i\lambda \omega_i}\right]\bigg) \\
&= \sum_{i=1}^d \frac{a^2_i}{4} \bigg[\frac{4}{\lambda^2\omega_i^2} - \frac{8}{\lambda^3\omega_i^3} + \frac{8}{\lambda^3\omega_i^3}e^{-\lambda \omega_i} + \frac{4}{\lambda^2 \omega^2_i}e^{-\lambda \omega_i} \\
& - \frac{2}{\lambda^2 \omega^2_i} + \frac{4}{\lambda^3 \omega^3_i}\left(1 - e^{-\lambda\omega_i}\right) - \frac{2}{\lambda^4 \omega^4_i}\left(1 - e^{-\lambda\omega_i}\right)^2\bigg] \\
&= \sum_{i=1}^d \frac{a^2_i}{4} \bigg[\frac{2}{\lambda^2\omega_i^2} - \frac{4}{\lambda^3\omega_i^3} + \frac{4}{\lambda^3\omega_i^3}e^{-\lambda \omega_i} + \frac{4}{\lambda^2 \omega^2_i}e^{-\lambda \omega_i}  - \frac{2}{\lambda^4 \omega^4_i}\left(1 - e^{-\lambda\omega_i}\right)^2\bigg] \\
&\geq \sum_{i=1}^d \frac{a^2_i}{2\lambda^2\omega_i^2} \bigg(1- \frac{2}{\lambda\omega_i} - \frac{1}{\lambda^2 \omega^2_i}\bigg),
\end{align*}
where we have used the independence of the $X_i$'s and the following intergal evaluations:
\begin{align*}
 \int_{0}^1 e^{-\lambda \omega_i t} \dint t =  \int_{0}^1 e^{-\lambda\omega_i(1-t)} \dint t  = \frac{1}{\lambda \omega_i}\left(1 - e^{-\lambda\omega_i}\right),
\end{align*}
and
\begin{align*}
\int_{0}^1 te^{-\lambda \omega_i t} \dint t =  \int_{0}^1 (1-t)e^{-\lambda\omega_i(1-t)} \dint t  = \frac{1}{\lambda^2 \omega^2_i}  - \frac{1}{\lambda^2\omega_i^2}e^{-\lambda \omega_i} - \frac{1}{\lambda \omega_i}e^{-\lambda \omega_i}.  
\end{align*}

Next we obtain a lower bound for the variance term. Recall that if no inputs $\{X_1, \ldots, X_n\}$ fall in $Z_x^{\lambda}$, then we assume the estimator $\hat{f}_{n}(x) = 0$. For each $C \in \mathcal{P}(\lambda)$, let $\mathcal{N}_n(C) = \sum_{i=1}^n 1_{\{X_i \in C\}}$ be the number of covariates inside $C$ and let $p_{\lambda,C} := \PP_X(X \in C)$. Then,
\begin{align*}
&\EE_{\mathcal{D}_n}\left[(\bar{f}_{\lambda}(x) - \hat{f}_{n}(x))^2\right] \\ &= \int_{\RR^d} \sum_{C \in \mathcal{P}(\lambda)} 1_{\{x \in C\}} \EE_{\mathcal{D}_n} \left[ \left(\EE_X[f(X)| X \in C] - \frac{\sum_{i=1}^n Y_i1_{\{X_i \in C\}}}{\mathcal{N}_n(C)}\right)^2\right] \dint \mu(x) \\
     &= \sum_{\substack{C \in \mathcal{P}(\lambda): \\ C \cap \mathrm{supp}(\mu) \neq \emptyset}}1_{\{x \in C\}}  \EE_{\mathcal{D}_n}\left[\left(\EE_X[f(X)| X \in C] - \frac{\sum_{i=1}^n Y_i1_{\{X_i \in C\}}}{\mathcal{N}_n(C)}\right)^2\right].
\end{align*}
For the expectation in the sum, we have
\begin{align*}
&\EE_{\mathcal{D}_n}\left[\left(\EE_X[f(X)| X \in C] - \frac{\sum_{i=1}^n Y_i1_{\{X_i \in C\}}}{\mathcal{N}_n(C)}\right)^2\right] \\
    &= \EE_{\mathcal{D}_n}\left[\EE_X[f(X)| X \in C]^2 - 2\EE_X[f(X)| X \in C]\frac{\sum_{i=1}^n Y_i1_{\{X_i \in C\}}}{\mathcal{N}_n(C)} + \left(\frac{\sum_{i=1}^n Y_i1_{\{X_i \in C\}}}{\mathcal{N}_n(C)}\right)^2\right]
\end{align*}
As in the proof of Lemma 15 in \cite{OReillyTran2021minimax},
\begin{align*}
&\EE_{\mathcal{D}_n}\left[\left(\EE_X[f(X)| X \in C] - \frac{\sum_{i=1}^n Y_i1_{\{X_i \in C\}}}{\mathcal{N}_n(C)}\right)^2\right] \\
    &= \sum_{k=1}^n\PP(\mathcal{N}_n(C) = k)k^{-1} \left(\EE_{X}[f(X)^2 | X \in C] - \EE_X[f(X)| X \in C]^2) + \sigma^2\right) \\
    & \qquad + \PP(\mathcal{N}_n(C) = 0) \EE_X[f(X)| X \in C]^2.
\end{align*}
Now, define the random variables $\tilde{\mathcal{N}}_n(C) := \mathcal{N}_n(C) + 1_{\{\mathcal{N}_n(C) = 0\}}$. Then, by Jensen's inequality,
\begin{align*}
&\EE_{\mathcal{D}_n}\left[\left(\EE_X[f(X)| X \in C] - \frac{\sum_{i=1}^n Y_i1_{\{X_i \in C\}}}{\mathcal{N}_n(C)}\right)^2\right] \\
& \geq \sigma^2 \left(\sum_{k=1}^n\PP(\mathcal{N}_n(C) = k)k^{-1} + \PP(\mathcal{N}_n(C)= 0)\right)  \\
    &= \sigma^2 \EE[\tilde{\mathcal{N}}_n(C)^{-1}] \geq \sigma^2 \EE[\tilde{\mathcal{N}}_n(C)]^{-1} \\
    &= \sigma^2 \left(np_{\lambda, C} + (1 - p_{\lambda, C})^n\right)^{-1} \geq \sigma^2 \left(np_{\lambda, C} + 1\right)^{-1}. 
\end{align*}
Thus, taking the expectation with respect to the random tessellation $\mathcal{P}$ gives the lower bound
\begin{align*}  
\EE_{\mathcal{P}, \mathcal{D}_n}\left[(\bar{f}_{\lambda}(x) - \hat{f}_{n}(x))^2\right] & \geq \sigma^2\EE_{\mathcal{P}}\left[\sum_{\substack{C \in \mathcal{P}(\lambda): \\ C \cap \mathrm{supp}(\mu) \neq \emptyset}}1_{\{x \in C\}}  \left(np_{\lambda, C} + 1\right)^{-1}\right] \\
&= \sigma^2\EE_{\mathcal{P}}\left[ \left(n\PP_X(X \in Z_x^{\lambda}) + 1\right)^{-1}\right] \\
&\geq \sigma^2\EE\left[ \left(n\mathrm{vol}_d(Z_x^{\lambda} \cap [0,1]^d) + 1\right)^{-1}\right],
\end{align*}
and then By Jensen's inequality,
\begin{align*}
\EE_{\mathcal{P}, \mathcal{D}_n}\left[(\bar{f}_{\lambda}(x) - \hat{f}_{n}(x))^2\right] &\geq \sigma^2\left(n\EE\left[ \mathrm{vol}_d(Z_x^{\lambda} \cap [0,1]^d)\right] + 1\right)^{-1} \\
&\geq \sigma^2\left(n\EE\left[ \mathrm{vol}_d(Z_x^{\lambda})\right] + 1\right)^{-1} \\
&= \sigma^2\left(\frac{n}{2^d\lambda^d\Pi_{i\in [d]}\omega_i} + 1\right)^{-1}. 
\end{align*}
Combining the lower bounds on the bias and the variance with \eqref{e:bias-var} gives the final result.

\end{proof}

\subsection{Proofs of Proposition \ref{p:Mondrian_image} and Corollary \ref{cor:obliqueMondrian_zerocell}}

\begin{proof}[Proof of Proposition \ref{p:Mondrian_image}]
In \cite{Nagel2005}, Lemma 4 and Corollary 1 show that the capacity functional for the cell boundaries of a STIT tessellation is determined by an associated intensity measure on the space of hyperplanes $\mathcal{H}^{d}$. Note that $\mathcal{Y}_A(\lambda)$ has associated intensity measure
\begin{align}\label{e:LambdaU}
\lambda\Lambda_A(\cdot) = \lambda\sum_{i=1}^m \frac{\|a_i\|_2}{\|A\|_{2,1}} \int_{\RR}  \mathbf{1}\{H_d\left(\frac{a_i}{\|a_i\|_2}, t\right) \in \cdot\} \dint t,
\end{align}
where $H_d(u,t):= \{x \in \RR^d : \langle x, u \rangle = t\}$.
The space $\mathcal{H}^{d}$ is equipped with the hit-miss topology, which is generated by sets of the following form: for Borel sets $B \subset \RR^d$, 
\[[B] := \{H \in \mathcal{H}^d : H \cap B \neq \emptyset\}.\]
By Lemma 4 in \cite{Nagel2005}, it suffices to define $\Lambda$ on sets of the form $[C]$ for convex bodies $C \subset \RR^d$. Thus, it is sufficient to show that for any convex body set $C \subset \RR^d$,
\[\lambda\Lambda_{A}([C]) = \frac{m\lambda}{\|A\|_{2,1}}\Lambda_{M}([A^T(C)]),\]
where $\Lambda_M$ is the intensity measure on $\mathcal{H}^d$ associated to the Mondrian tessellation with unit lifetime.

Let $\{e_i\}_{i=1}^m$ denote the standard basis in $\RR^m$ and $C$ a convex body in $\RR^d$. First, note that $H_m(e_i, t) \cap A^T(C) \neq \emptyset$ if and only if
\begin{align*}
h_{A^T(C)}(-e_i) \leq t \leq h_{A^T(C)}(e_i).
\end{align*}
Then, noting that $$h_{A^T(C)}( \pm \, e_i) = h_{C}(\pm \, Ae_i) = \|Ae_i\|_2 h_{C}(\pm \, Ae_i /\|Ae_i\|_2) = \|a_i\|_2 h_{C}(\pm \, a_i /\|a_i\|_2),$$ the above inequality is equivalent to the inequality
\begin{align*}
h_{C}(- a_i /\|a_i\|_2) \leq \frac{t}{\|a_i\|_2} \leq h_{C}( a_i /\|a_i\|_2),
\end{align*}
These inequalities hold if and only if $H_d(a_i /\|a_i\|_2, t/\|a_i\|_2) \cap C \neq \emptyset$. Thus,
\begin{align*}
\frac{m\lambda}{\|A\|_{2,1}}\Lambda_{M}([A^T(C)]) &= \frac{\lambda}{\|A\|_{2,1}} \sum_{i=1}^m \int_{\RR} 1_{\{H_m(e_i, t) \cap A^T(C) \neq \emptyset\}} \dint t \\
&= \frac{\lambda}{\|A\|_{2,1}} \sum_{i=1}^m \int_{\RR} 1_{\{ H_d(a_i /\|a_i\|_2, t/\|a_i\|_2) \cap C \neq \emptyset\}} \dint t \\
&= \lambda \sum_{i=1}^m \frac{\|a_i\|_2}{\|A\|_{2,1}}  \int_{\RR} 1_{\{ H_d(a_i /\|a_i\|_2, r) \cap C \neq \emptyset\}} \dint r = \lambda\Lambda_{A}([C]). 
\end{align*}
\end{proof}

\begin{proof}[Proof of Corollary \ref{cor:obliqueMondrian_zerocell}]
Recall that the distribution of a random convex body containing the origin is determined by the set of containment probabilities $\PP(K \subseteq Z)$ for all convex bodies $K$ containing the origin. For the zero cell of a STIT tessellation with associated intensity measure $\Lambda$,
\[\PP(K \subseteq Z_0) = \PP(Y \cap K = \emptyset) = e^{-\Lambda([K])}.\]
Thus the statement follows from the fact we showed above that for any convex body $C \subset \RR^d$,
\[\Lambda_A([C]) = \frac{d}{\|A\|_{2,1}}\Lambda_{M}([A^T(C)]),\]
where $\Lambda_M$ is the intensity measure on $\mathcal{H}^d$ associated to $Y_M$, since this implies
\begin{align*}
\PP(K \subseteq Z_0) &= e^{-\Lambda_A([K])} = e^{-\frac{d}{\|A\|_{2,1}}\Lambda_M([A^T(K)])} \\
&= \PP\left(A^T(K) \subseteq Z_0^{(M)}\right) = \PP\left(A^T(K) \subseteq Z_0^{(M)} \cap \mathrm{ran}(A^T)\right) \\
&= \PP\left(K \subseteq (A^T)^{+}(Z_0^{(M)} \cap \mathrm{ran}(A^T))\right) = \PP\left(K \subseteq (A^{+})^T(Z_0^{(M)} \cap \mathrm{ran}(A^T))\right),
\end{align*}
where $A^+$ is the Moore-Penrose pseudoinverse of $A^T$.
\end{proof}
\end{appendix}

\end{document}